\newcommand{\Bs}{{\rm Bs}}
\newtheorem{lemma1}{}[section]
\newenvironment{lemma}{\begin{lemma1}{\bf Lemma.}}{\end{lemma1}}
\newenvironment{theorem}{\begin{lemma1}{\bf Theorem.}}{\end{lemma1}}
\newenvironment{proposition}{\begin{lemma1}{\bf Proposition.}}{\end{lemma1}}
\newenvironment{corollary}{\begin{lemma1}{\bf Corollary.}}{\end{lemma1}}
\newenvironment{remark}{\begin{lemma1}{\bf Remark.}\rm}{\end{lemma1}}
\newenvironment{definition}{\begin{lemma1}{\bf Definition.}}{\end{lemma1}}
\newenvironment{conjecture}{\begin {lemma1}{\bf Conjecture.}}{\end{lemma1}}
\newenvironment{problem}{\begin{lemma1}{\bf Problem.}}{\end{lemma1}}
\newenvironment{the local obstruction - setup}{\begin{lemma1}{\bf The local obstruction - setup.}}{\end{lemma1}}
\newenvironment{remark*}{{\bf Remark.}}{}
\newenvironment{remarks*}{{\bf Remarks.}}{}
\newenvironment{example*}{{\bf Example.}}{}
\newenvironment{assumption*}{{\bf Assumption.}}{}
\newcommand{\Q}{\ensuremath{\mathbb{Q}}}
\newcommand{\C}{\ensuremath{\mathbb{C}}}
\newcommand{\N}{\ensuremath{\mathbb{N}}}
\newcommand{\PP}{\ensuremath{\mathbb{P}}}
\newcommand{\merom}[3]{\ensuremath{#1:#2 \dashrightarrow #3}}
\newcommand{\holom}[3]{\ensuremath{#1:#2  \rightarrow #3}}
\newcommand{\fibre}[2]{\ensuremath{#1^{-1} (#2)}}
\newcommand\sF{{\mathcal F}}
\newcommand\sO{{\mathcal O}}
\DeclareMathOperator*{\pic}{Pic}
\author{Andreas H\"oring}
\author{Thomas Peternell}
\address{Andreas H\"oring, Universit\'e C\^ote d'Azur, CNRS, LJAD, France, Institut universitaire de France}
\email{Andreas.Hoering@univ-cotedazur.fr}
\address{Thomas Peternell, Mathematisches Institut, Universit\"at Bayreuth, 95440 Bayreuth, 
Germany}
\email{thomas.peternell@uni-bayreuth.de}
\subjclass[2010]{14R10, 14J45, 14E05, 14E30}
\keywords{affine manifold, Fano manifold, tangent bundle, canonical extension}
\title{Fano threefolds with affine canonical extensions} 
\date{November 20, 2022} 
\begin{document}

\begin{abstract} 
Let $M$ be a smooth Fano threefold such that a canonical extension of the tangent bundle
is an affine manifold. We show that $M$ is rational homogeneous.
\end{abstract} 

\maketitle


\section{Introduction}

\subsection{Main result}
Consider a compact K\"ahler manifold $M$ with tangent bundle~$T_M$. Any K\"ahler class determines  a so-called 
canonical extension 
$$ 0 \to \sO_M \to V \to T_M \to 0.$$
Then we may consider the manifold $$Z_M = \PP(V)  \setminus \PP(T_M)$$ which is an affine bundle over $M$. In this context, we have the 
following conjecture \cite{GW20}, see also \cite[Conj.1.1]{HP21}: 

\begin{conjecture} \label{MainConj}
Let $M$ be a projective manifold such that a canonical extension $Z_M$ is affine for some K\"ahler class. 
Then $M$ is rational homogeneous, i.e., $M = G/P$ with $G$ a semi-simple complex Lie group and $P$ a parabolic subgroup.
\end{conjecture}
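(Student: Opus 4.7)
My plan is to reduce the conjecture to the Campana--Peternell program in three steps: first, that $Z_M$ being affine forces $M$ to be Fano; second, that affineness forces the tangent bundle $T_M$ to be nef; third, that Campana--Peternell then yields rational homogeneity.

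The first step, showing $M$ is Fano, should be relatively soft. The affineness of $Z_M$ produces many regular functions. Organized by the affine bundle projection $Z_M \to M$, these functions are given fiberwise by polynomials, hence globally correspond to sections of the symmetric powers $\Sym^k V^*$, or dually to sections of $\sO_{\PP(V)}(k)$ not vanishing identically on $Z_M$. Combined with the compactification $\PP(V) = Z_M \sqcup \PP(T_M)$, this should imply that $\sO_{\PP(V)}(1)$ is big and that $-K_M$ is ample; a version of this step is likely already available in \cite{GW20} or \cite{HP21}.

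The second step, the main technical input, is to upgrade the above positivity to nefness of $V$, and hence of its quotient $T_M$. Nefness of $V$ is equivalent to nefness of the tautological quotient $\sO_{\PP(V)}(1)$ on $\PP(V)$. The affineness of $Z_M$ means that $Z_M$ contains no proper compact curves, so any irreducible curve $C \subset \PP(V)$ witnessing negativity of $\sO_{\PP(V)}(1)$ must meet the divisor $\PP(T_M)$; a careful intersection-theoretic argument should then show that such a curve is incompatible with the affine structure. A refinement via the Harder--Narasimhan filtration of $T_M$ with respect to $-K_M$, combined with Campana--Paun foliation theory to propagate positivity along subfoliations, may be needed to rule out unstable pathologies. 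Transferring this analytic/topological input into a purely algebraic positivity statement on a vector bundle is where I expect the main difficulty to lie.

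The final step applies the Campana--Peternell conjecture: a Fano manifold with nef tangent bundle is rational homogeneous. This is the principal obstacle to an unconditional proof, since CP is known only up to dimension~$5$ (Campana--Peternell, Mok, Watanabe, Kanemitsu and others); the present strategy therefore yields the main conjecture only conditionally on CP, or unconditionally in low dimensions such as the threefold case treated in this paper. An alternative, more direct route would attempt to integrate the extension $V$ itself: one would equip $V$ with a Lie-algebra structure extending the bracket on $T_M$, with structure constants governed by the K\"ahler class, and then realize $M$ as $G/P$ for the associated simply connected Lie group $G$ acting transitively on $M$. I expect the difficulty of this direct route to be of the same order as CP itself, so in either case the substantive obstacle is genuinely new.
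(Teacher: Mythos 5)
The statement you are trying to prove is Conjecture~\ref{MainConj}: the paper does \emph{not} prove it in general, but only in the special case of Fano threefolds (Theorem~\ref{thm:Fano3}), so there is no ``paper proof'' to match yours against; any complete argument would have to overcome obstacles the authors themselves leave open. Measured against that standard, your proposal has genuine gaps at every step. Step 1 is already unknown: affineness of $Z_M$ gives bigness of $T_M$ by \cite[Prop.4.2]{GW20} (bigness of $\zeta_V$, sections of $\Sym^k V^*$, exactly as you say), but it is \emph{not} known that $M$ is Fano, nor even rationally connected --- the paper explicitly states this only as an expectation about potential counterexamples. Step 2 is the real problem. Your intersection-theoretic reduction in fact proves more than you claim: since $\PP(T_M) \in |\zeta_V|$, any irreducible curve $C$ with $\zeta_V \cdot C < 0$ must be \emph{contained in} the boundary divisor $\PP(T_M)$, not merely meet it. But for curves inside the boundary, affineness of the complement $Z_M$ gives no direct intersection-theoretic obstruction whatsoever; there is no ``careful argument'' to be had at this level of generality. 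This is exactly where the whole difficulty of the problem sits: the statement ``$Z_M$ Stein/affine $\Rightarrow T_M$ nef'' is precisely Conjecture~\ref{ana} of the paper, stated there as open. The threefold $V_5$ illustrates why: it has big, non-nef tangent bundle, with curves $\tilde l \subset \PP(T_{V_5})$ on which $\zeta_M \cdot \tilde l = -1$, and ruling out affineness of $Z_{V_5}$ is not a local computation around these curves but occupies the entire Section~\ref{section-V5} of the paper, via an explicit two-step blow-up, a relative Kawamata antiflip, the construction of a weak Fano model, and the holomorphic-convexity criterion of Grauert--Remmert (cf.\ Proposition~\ref{prop:strat} and Theorem~\ref{theorem-V5}). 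So Step 2, as written, assumes the substance of what is to be proven.

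Step 3 you correctly flag as conditional on the Campana--Peternell conjecture, known only in dimension $\leq 5$; the ``direct route'' of endowing $V$ with a Lie algebra structure is pure speculation with no indicated mechanism for producing the group $G$. The net effect is that your proposal is a (reasonable, and essentially folklore) reduction of Conjecture~\ref{MainConj} to two other open conjectures --- Conjecture~\ref{ana} and CP --- plus an unproven Fano-ness claim, rather than a proof. It is also worth noting that even in the threefold case the paper does \emph{not} follow your route: rather than proving nefness of $T_M$ abstractly, it uses the classification results of \cite{HL21} and \cite[Cor.1.8]{HP21} to reduce to finitely many candidates and then kills the single non-homogeneous candidate $V_5$ by the explicit birational geometry described above. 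If you want to make progress along your lines, the concrete open problem to attack is Conjecture~\ref{ana}, or its weakest nontrivial instance: find a cohomological or intersection-theoretic proof that $Z_{V_5}$ is not affine, cf.\ Problem~\ref{problem-codimension-1} and the discussion of \cite{Zha08} in the paper.
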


The aim of the paper is to prove Conjecture \ref{MainConj} for Fano threefolds: 

\begin{theorem} \label{thm:Fano3} 
Let $M$ be a smooth Fano threefold. If $Z_M$ is affine for some K\"ahler class, then $M$ is rational homogeneous.
\end{theorem}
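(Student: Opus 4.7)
The plan is to reduce the theorem to the intermediate claim that affineness of $Z_M$ forces the tangent bundle $T_M$ to be nef. Granted this, the classification of smooth Fano threefolds with nef tangent bundle (the Campana--Peternell theorem in dimension three) identifies $M$ as one of $\PP^3$, $Q_3$, $\PP(T_{\PP^2})$, $\PP^1 \times \PP^2$, or $(\PP^1)^3$, each of which is rational homogeneous.

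To prove the intermediate claim, I would work with the defining sequence $0 \to \sO_M \to V \to T_M \to 0$ and the fact that the subsheaf $\sO_M \hookrightarrow V$ cuts out the divisor $\PP(T_M) \in |\sO_{\PP(V)}(1)|$ on $\PP(V)$. Affineness of $Z_M = \PP(V) \setminus \PP(T_M)$ is equivalent to a positivity statement for $\sO_{\PP(V)}(1)$, namely that some effective multiple supported on $\PP(T_M)$ is ample in a neighborhood of $Z_M$. Restricting to a covering family of rational curves $C \subset M$, which exists because $M$ is Fano and hence rationally connected, one can try to transfer the affineness to $\PP(V|_C) \setminus \PP(T_M|_C)$ and thereby force every quotient line bundle of $T_M|_C$ to have nonnegative degree. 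Propagating this along the covering family would then yield nefness of $T_M$ on all of $M$.

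The main difficulty is precisely the step that passes from the global affineness of $Z_M$ on $\PP(V)$ to nefness of $T_M|_C$ along an individual curve. One must control how the extension class pairs with possible negative quotients of $T_M|_C$ and preclude configurations in which negativity along one curve is hidden by positivity in the surrounding threefold. For Fano threefolds this is where the classification enters: on the one hand, the variety of minimal rational tangents provides sharp constraints on the splitting type of $T_M$ along a minimal rational curve; on the other hand, extremal Mori contractions reduce higher-Picard-rank situations to lower-dimensional ones. Should a uniform argument fail in the borderline cases, a case-by-case inspection of the Iskovskih--Mori--Mukai list should produce, for each Fano threefold not appearing in the five-item list above, an explicit obstruction to the affineness of $Z_M$ (typically a rational curve $C \subset M$ along which $V|_C$ has a quotient of negative degree).
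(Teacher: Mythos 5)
Your high-level reduction is close in outline to what the paper does: for $\rho(M)=1$ the paper uses that affineness of $Z_M$ forces $T_M$ to be big, and the classification of Fano threefolds with big tangent bundle then leaves only $\PP^3$, $Q^3$ and the del Pezzo threefold $V_5$; for $\rho(M)\geq 2$ it combines the absence of birational Mori contractions with the Mori--Mukai list. Invoking the Campana--Peternell classification as an endpoint is legitimate. The problem is that your intermediate claim (affine $\Rightarrow$ $T_M$ nef) is the entire mathematical content of the theorem, and the mechanism you propose for proving it fails in the one case that matters. On $V_5$ there is indeed a covering family of lines $l$ with $T_M\otimes\sO_l\simeq\sO_l(2)\oplus\sO_l(1)\oplus\sO_l(-1)$, hence a negative quotient $V\otimes\sO_l\twoheadrightarrow\sO_l(-1)$; but this quotient factors through $T_M\otimes\sO_l$, so the compact curve it defines lies inside the boundary divisor $\PP(T_M)$, not inside $Z_M=\PP(V)\setminus\PP(T_M)$, and therefore gives no obstruction whatsoever to the affineness of the complement. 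Likewise $\PP(V\otimes\sO_C)\setminus\PP(T_M\otimes\sO_C)$ is a closed subvariety of $Z_M$, hence affine whenever $Z_M$ is, yet its affineness is governed by the restricted extension class and does not force nefness of $T_M\otimes\sO_C$. So your fallback of finding, for each non-homogeneous Fano threefold, ``a rational curve $C$ along which $V|_C$ has a quotient of negative degree'' rests on a false premise and cannot close the gap.

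What actually excludes $V_5$ in the paper is a global construction occupying all of Section 3: an explicit sequence of blow-ups and blow-downs $\psi:\PP(V)\dashrightarrow\PP(V)^-$ onto a weak Fano model, biholomorphic on $Z_M$, which realizes $Z_M$ as the complement in $\PP(V)^-$ of the strict transform $\PP(T_M)^-$ together with a nonempty codimension-two analytic set $A$ not contained in $\PP(T_M)^-$; by Grauert--Remmert such a complement is not holomorphically convex, hence not Stein and a fortiori not affine. This uses the detailed orbit geometry of $V_5$ (the family of lines, the surface $S$ swept out by the negative liftings $\tilde l$, the contact structure on $\PP(T_M)$), and nothing in your sketch substitutes for it. The case $\rho(M)\geq 2$ also needs input you do not supply: the paper proves that for a Mori fibre space over a curve the smooth fibres contain no $(-1)$-curves and that a conic bundle contraction must be a $\PP^1$-bundle (Theorem 2.3 and its corollaries), which together with the classification leaves only the homogeneous threefolds on your list.
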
 

Fano threefolds are of course rather special manifolds, but they are a natural testing ground for Conjecture \ref{MainConj}: since $Z_M$ is affine, the tangent bundle $T_M$ is big \cite[Prop.4.2]{GW20}, a very restrictive property for manifolds that are not rational homogeneous \cite{HLS22}.
Thus it seems likely that potential counterexamples to Conjecture \ref{MainConj} share at least some properties of rational homogeneous spaces, e.g. we would expect them to be rationally connected or even Fano.

To prove Theorem \ref{thm:Fano3}, we first investigate the structure of extremal rays on $M$. We have shown in \cite[Cor.1.8]{HP21} that $M$ does not admit any birational Mori contraction. This will be used, via classification, to settle the case that $b_2(M) \geq 2$. 
The far more difficult case is that $b_2(M) = 1$. The tangent bundle being big, we know  by \cite[Cor.1.2]{HL21} that $M$ must be the 
del Pezzo threefold $V_5$ of degree five, unless $M$ is rational homogeneous (i.e., $\PP_3$ or the quadric $Q_3$). Thus we will prove - which will be surprisingly difficult - 

\begin{theorem} \label{theorem-V5}
Let $M$ be the del Pezzo threefold of degree five. Then $Z_M$ is not Stein.
\end{theorem}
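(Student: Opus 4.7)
The plan is to exhibit a positive-dimensional compact subvariety of $Z_M$, which contradicts the Stein property.

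First, one restricts the canonical extension to a line. A general line $L \subset M$ has normal bundle $N_{L/M} \cong \mathcal{O}_L^{\oplus 2}$, so $T_M|_L \cong \mathcal{O}_L(2) \oplus \mathcal{O}_L^{\oplus 2}$. Since $\pic(V_5) = \Z \cdot H$ and $h^{1,1}(V_5) = 1$, the canonical extension class in $H^1(M, \Omega_M^1) \cong \C$ is a nonzero multiple of $c_1(H)$, and its image under the restriction $H^1(M, \Omega_M^1) \to H^1(L, \Omega_M^1|_L) \cong \C$ is a nonzero multiple of $H \cdot L = 1$. Hence the extension does not split on $L$, and $V|_L \cong \mathcal{O}_L(1)^{\oplus 2} \oplus \mathcal{O}_L^{\oplus 2}$, with the canonical $\mathcal{O}_L$-subbundle sitting entirely inside the $\mathcal{O}_L(1)^{\oplus 2}$ summand (as the extension is nontrivial only through the $\mathcal{O}_L(2)$-summand of $T_M|_L$).

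Second, I check that the naive approach fails. A compact curve $\Gamma \subset Z_M$ must satisfy $\Gamma \cdot \mathcal{O}_{\mathbb{P}(V)}(1) = 0$ and $\Gamma \not\subset \mathbb{P}(T_M)$, since $[\mathbb{P}(T_M)] = c_1(\mathcal{O}_{\mathbb{P}(V)}(1))$. The most natural candidates---sections of $\mathbb{P}(V)|_L \to L$ corresponding to quotients $V|_L \twoheadrightarrow \mathcal{O}_L$---have the right intersection number, but every such quotient factors through the projection $V|_L \twoheadrightarrow \mathcal{O}_L^{\oplus 2}$ (the $\mathcal{O}_L(1)^{\oplus 2}$-summand contributes nothing to $\Hom(V|_L, \mathcal{O}_L)$), and since the canonical $\mathcal{O}_L$-subbundle lies inside $\mathcal{O}_L(1)^{\oplus 2}$, the composition $\mathcal{O}_L \hookrightarrow V|_L \twoheadrightarrow \mathcal{O}_L$ vanishes. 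Thus the section is contained in $\mathbb{P}(T_M|_L) \subset \mathbb{P}(T_M)$, and no compact curve in $Z_M$ arises by a fiberwise construction. A genuinely global argument is therefore required.

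Third, for the global construction I would exploit one of the following structures: (a) the $SL_2$-action on $V_5$ (Mukai--Umemura), under which $V$, $\mathbb{P}(V)$, and $Z_M$ are $SL_2$-equivariant; one then searches for $SL_2$-invariant compact subvarieties of $Z_M$ by analyzing the equivariant decomposition of $V|_{C_0}$ over the closed $SL_2$-orbit $C_0 \subset V_5$; (b) the universal family of lines $p: \mathcal{L} \to \PP^2$, $q: \mathcal{L} \to V_5$, in which the relatively trivial rank-$2$ part of $q^*V$ along fibers of $p$ descends to a bundle on $\PP^2$, and an appropriate twisted quotient yields a compact subvariety of $\mathbb{P}(V)$ disjoint from $\mathbb{P}(T_M)$; or (c) Mori theory on $\mathbb{P}(V)$ (of Picard rank two), identifying the non-fiber extremal ray and showing that the fibers of its contraction lie inside $Z_M$.

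The main obstacle, as foreshadowed in the second step, is guaranteeing genuine avoidance of $\mathbb{P}(T_M)$: fiberwise, every natural construction lands inside $\mathbb{P}(T_M)$, so the global argument must involve a delicate twist---perhaps via a quotient $V \to \mathcal{L}$ for some nontrivial line bundle $\mathcal{L}$, or via a multi-section over a carefully chosen curve or surface---whose zero locus and intersection pattern with $\mathbb{P}(T_M)$ is controlled by the specific geometry of $V_5$. Executing this construction rigorously, and verifying that the resulting compact subvariety really lies in $Z_M$, is the ``surprisingly difficult'' step the authors highlight.
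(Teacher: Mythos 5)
Your plan --- to exhibit a positive-dimensional compact subvariety of $Z_M$ --- cannot be carried out, because no such subvariety exists. The paper proves (Theorem \ref{theorem-V5-details}) that there is a small birational modification $\psi:\PP(V)\dasharrow \PP(V)^-$, biholomorphic on $Z_M$, such that $\PP(V)^-\setminus\PP(T_M)^-$ is \emph{affine} and $\psi(Z_M)=\PP(V)^-\setminus(\PP(T_M)^-\cup A)$ with $A$ a non-empty analytic set of codimension two not contained in $\PP(T_M)^-$. Thus $Z_M$ embeds as a Zariski-open subset of an affine variety and contains no compact curves or surfaces; all three of your proposed global strategies (a), (b), (c) are searching for an object that does not exist, and your correct observation in the second step --- that every fibrewise candidate lands inside $\PP(T_M)$ --- is the first symptom of this. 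The actual mechanism of non-Steinness is different: $Z_M$ is an affine variety minus a non-empty analytic subset of codimension $\geq 2$, so by Hartogs--Riemann extension every holomorphic function on $Z_M$ extends across $A$, whence $Z_M$ is not holomorphically convex \cite[V, \S 1, Thm.4]{GR04} --- exactly the phenomenon of $\C^2\setminus\{0\}$. (Note also that your proposal, even on its own terms, stops at a list of unexecuted alternatives and does not constitute a proof.)

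For comparison, the geometry the paper actually exploits is not the general line $L$ (where, as you correctly compute, $V|_L\simeq\sO_L(1)^{\oplus 2}\oplus\sO_L^{\oplus 2}$ and nothing interesting happens) but the one-parameter family of \emph{special} lines $l$ with $T_M\otimes\sO_l\simeq\sO_l(2)\oplus\sO_l(1)\oplus\sO_l(-1)$, i.e.\ the tangent lines of the closed $\mathrm{PSL}_2$-orbit $B$. Their liftings $\tilde l\subset\PP(T_M)$ determined by the quotient $T_M\otimes\sO_l\twoheadrightarrow\sO_l(-1)$ sweep out a surface $S\simeq\PP^1\times\PP^1\subset\PP(T_M)\subset\PP(V)$ on which $\zeta_V$ is negative; after computing $N_{S/\PP(V)}$ the authors perform two blow-ups and two Fujiki contractions (a relative Kawamata antiflip followed by a divisorial contraction) replacing $S$ by a codimension-two set $A$ that sticks out of the strict transform of $\PP(T_M)$. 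So the special lines you would need do exist, but they lie inside $\PP(T_M)$, and the point is to flip them out of the picture rather than to find compact cycles inside $Z_M$.
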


 A more explicit explanation is given in Theorem \ref{theorem-V5-details}.

\subsection{A general strategy} 
When studying complements of hypersurfaces, an interesting source of examples
is given by small modifications \cite[p.2]{HP21}, \cite[Ex.5.5]{Ott12}:
given a smooth ample divisor $Y^- \subset X^-$ in a projective manifold,
one tries to find a subset $C^- \subset Y^- \subset X^-$ of codimension at least two that can be ``flipped'', i.e. there exists a birational map
$$
X^- \dashrightarrow X^+
$$
that is an isomorphism $X^- \setminus C^- \simeq X^+ \setminus C^+$
for some subset $C^+ \subset X^+$ of codimension at least two. 
Let $Y^+  \subset X^+$ be the strict transform of $Y^-$, then $Y^- \subset X^-$
is never an ample divisor. Nevertheless, if $C^+ \subset X^+$, its complement
$$
X^+ \setminus Y^+ \simeq X^- \setminus Y^-
$$
is affine. In order to decide whether a canonical extension $Z_M$ affine or not, our goal is to inverse this procedure: we aim to construct a small birational map 
$$ 
\psi: X=\mathbb P(V) \dasharrow \mathbb P(V)^-=X^-.
$$
which is biholomorphic on $Z_M$ such that the strict transform $ \mathbb P^-(T_M)$ is a big and nef divisor on $\mathbb P^-(V)$. 
There is a natural candidate for the space $X^-$:
the tangent bundle $T_M$ being big, the tautological class $\zeta_V \rightarrow \PP(V)$ is big. Therefore for some $k \gg 0$ we could consider the birational map
$$
\psi : X=\mathbb P(V) \dasharrow X^-
$$
given by the image of $\varphi_{|k \zeta_V|}$. While the indeterminacy locus of $\psi$ is contained in $\PP(T_M) \subset \PP(V)$ it is absolutely not clear whether such a map
is an isomorphism in codimension one. 
Therefore we introduce the following terminology. 

\begin{definition}  \label{definition-weak-fano-model}
Let $M$ be a projective manifold, and let $\PP(T_M) \subset \PP(V)$ be a canonical extension associated to some K\"ahler class on $M$.
Let 
$$\psi: \PP(V) \dasharrow \PP(V)^- $$
be a finite sequence of antiflips (i.e., inverses of flips). Let $\PP(T_M)^-$ be the strict transform of 
$\PP(T_M)$ in $\PP(V)^- $, and assume that $\psi_{\vert \PP(T_M)} $ is again a finite sequence of antiflips. 
In particular, $\PP(V)^-$ and $\PP(T_M)^-$ are $\mathbb Q$-factorial with terminal singularities. 
We say that 
$$
(\PP(V)^-,\PP(T_M)^-))
$$ 
is a weak Fano model of $(\PP(V),\PP(T_M))$, if the anticanonical divisor of $\PP(V)^-$ is nef and big. 
\end{definition} 

Verifying that $\PP(V)^-$ is a weak Fano variety is equivalent to verifying this property
for $\PP(T_M)^-$. In fact, $\PP(T_M) \in \vert \zeta_V \vert  = \vert \sO_{\PP(V)}(1) \vert$. Thus, if $\zeta_V^-$ denotes
the induced Weil divisor class on $\PP(V)^-$, then 
$$ 
\PP(T_M)^- \in \vert \zeta_V^- \vert.
$$ 
Since $-K_{\PP(V)^-} = (\dim M +1) \zeta_V^-$, it follows that $-K_{\PP(T_M)^-}$ is big and nef as well. 
Vice versa, observe that if $(\PP(V),\PP(T_M)) $ has a weak Fano model, then the vector bundles $V$ and $T_M$ have to be big.
Constructing a weak Fano model gives a straightforward way to check the somewhat elusive property that $Z_M$ is affine:

\begin{proposition} \label{prop:strat} 
Let $M$ be a projective manifold, and let $\PP(T_M) \subset \PP(V)$ be a canonical extension associated to some K\"ahler class on $M$.
Assume that $(\PP(V),\PP(T_M)) $ has a weak Fano model $(\PP(V))^-, \PP(T_M)^-)$.  Let
$$ \psi: \PP(V) \dasharrow \PP(V)^-$$
be the corresponding sequence of antiflips and assume that the restriction $\psi_{\vert Z_M}$ is biholomorphic. Then we have a dichotomy:
\begin{itemize}
\item If $\psi(Z_M) \subsetneq \mathbb P(V)^- \setminus \mathbb P(T_M)^-$, then $Z_M$ is not affine.
\item If $\psi(Z_M) = \mathbb P(V)^- \setminus \mathbb P(T_M)^-$, 
then $Z_M$ is affine. 
\end{itemize}
\end{proposition}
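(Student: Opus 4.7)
The plan is first to make the position of $\psi(Z_M)$ inside $\PP(V)^-$ explicit, and then to treat the two cases with different tools. Let $I\subset \PP(V)$ and $J\subset \PP(V)^-$ denote the indeterminacy loci of $\psi$ and $\psi^{-1}$. Because $\psi$ is a finite composition of antiflips, both $I$ and $J$ have codimension at least two, and $\psi$ restricts to a biregular isomorphism $\PP(V)\setminus I\xrightarrow{\sim}\PP(V)^-\setminus J$. The hypothesis that $\psi|_{Z_M}$ is biholomorphic forces $I\subset \PP(T_M)$; comparing strict transforms under this isomorphism then yields
\[\psi(Z_M)\;=\;\PP(V)^-\setminus\bigl(\PP(T_M)^-\cup J\bigr),\]
so the dichotomy of the proposition corresponds precisely to whether $J\subset \PP(T_M)^-$ (Case~2) or not (Case~1).

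\emph{Case~1.} Here $E:=J\setminus \PP(T_M)^-$ is a non-empty closed subset of $\PP(V)^-\setminus \psi(Z_M)$, of codimension $\geq 2$ in $\PP(V)^-$ and disjoint from the divisor $\PP(T_M)^-$. Any open affine subset of a normal projective variety has a complement of pure codimension one -- a classical fact about ample subvarieties (see e.g.\ Hartshorne, \emph{Ample Subvarieties of Algebraic Varieties}). The extraneous component $E$ therefore rules out affineness of $\psi(Z_M)$, and hence of the biholomorphic $Z_M$.

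\emph{Case~2.} Now $\psi(Z_M)=\PP(V)^-\setminus \PP(T_M)^-$, and it suffices to show that this complement is affine. Since $\PP(V)^-$ has terminal (hence klt) singularities and $-K_{\PP(V)^-}=(\dim M+1)\zeta_V^-$ is big and nef, the base-point-free theorem makes $\zeta_V^-$ semi-ample, so for $m\gg 0$ the linear system $|m\zeta_V^-|$ defines a birational morphism $\pi\colon \PP(V)^-\to Y$ onto a normal projective variety $Y$ with $\zeta_V^-=\pi^*A$ for an ample $\Q$-Cartier divisor $A$ on $Y$. The divisor $\PP(T_M)^-$ is then the preimage of some $H\in|A|$, so
\[\PP(V)^-\setminus \PP(T_M)^-\;=\;\pi^{-1}(Y\setminus H)\]
maps properly and birationally onto the affine variety $Y\setminus H$. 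A proper birational morphism to an affine variety has affine source if and only if it is finite, and one is thus reduced to showing that the exceptional locus of $\pi$ -- equivalently, the null locus of $\zeta_V^-$ -- is contained in $\PP(T_M)^-$.

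\emph{Main obstacle.} This null-locus verification is the hardest step. Suppose an irreducible curve $C\subset \PP(V)^-$ had $\zeta_V^-\cdot C=0$ with $C\not\subset \PP(T_M)^-$; the nefness and effectiveness of $\PP(T_M)^-\equiv \zeta_V^-$ would force $C\cap \PP(T_M)^-=\emptyset$, so $C\subset \psi(Z_M)$, and then $\psi^{-1}(C)\subset Z_M$ would be a complete curve projecting finitely onto a curve in $M$ under $Z_M\to M$ (since the affine fibres contain no positive-dimensional compact subvariety). Excluding such a curve must exploit the antiflip structure of $\psi$: the hypothesis $J\subset \PP(T_M)^-$ says that $\psi$ is a genuine isomorphism above $\PP(V)^-\setminus \PP(T_M)^-$, so numerical classes transport cleanly between $\PP(V)$ and $\PP(V)^-$ in that region, and one combines this with the Fano-type positivity of $M$ and the bigness of $T_M$ (already built into the weak Fano hypothesis) to reach a contradiction. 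In the concrete applications envisioned for this proposition -- notably Theorem~\ref{theorem-V5} -- this null-locus step is verified by direct geometric analysis of the explicit antiflip sequence produced by the construction.
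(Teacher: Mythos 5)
Your reduction is correct and your first case is fine. The identification $\psi(Z_M)=\PP(V)^-\setminus(\PP(T_M)^-\cup J)$, with $J$ the codimension~$\geq 2$ indeterminacy locus of $\psi^{-1}$, is exactly the right picture, and in the first case your purity argument (the complement of an affine open subset of a variety is pure of codimension one) is a legitimate algebraic substitute for the paper's argument, which instead observes that $\psi(Z_M)\simeq Z_M$ is the complement of a non-empty analytic subset of codimension $\geq 2$ in the normal complex space $\PP(V)^-\setminus\PP(T_M)^-$ and is therefore not holomorphically convex by Riemann extension \cite[V, \S 1, Thm.4]{GR04}. Either route closes that case.

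The second case, however, has a genuine gap, and you have located it yourself without filling it: after passing to the semiample fibration $\pi$ of $\zeta_V^-$ you must show that every positive-dimensional fibre of $\pi$ lies in $\PP(T_M)^-$, and your paragraph ``Main obstacle'' only asserts that ``one combines\dots to reach a contradiction'' and then defers to ``direct geometric analysis'' in applications -- which is not available when proving a general proposition. The gap is real, not cosmetic: the statement ``$X$ weak Fano and $D\in|-\lambda K_X|$ implies $X\setminus D$ affine'' is false without further input (blow up $\PP^2$ in three collinear points and take a general anticanonical member; the $(-2)$-curve survives as a complete curve in the complement). Moreover the ingredients you gesture at (the antiflip structure, bigness of $T_M$, Fano positivity of $M$) are not the ones that close the argument. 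The paper handles this case by citing \cite[3.14]{HP21}, and the essential input behind any such statement is that $Z_M$ contains no positive-dimensional compact analytic subset, because the defining extension class is a \emph{K\"ahler} class: a compact curve in $Z_M$ mapping finitely onto $\bar C\subset M$ would give a section of the affine bundle over the normalisation $\holom{g}{\tilde C}{M}$, i.e.\ a splitting of $0\to\sO\to g^*V\to g^*T_M\to 0$, whereas the image of the pulled-back extension class under $H^1(\tilde C,g^*\Omega_M)\to H^1(\tilde C,\omega_{\tilde C})\simeq\C$ is $\int_{\tilde C}g^*\alpha>0$. Granting this, a positive-dimensional $\pi$-fibre over $Y\setminus H$ would be a compact subvariety of $\psi(Z_M)\simeq Z_M$, a contradiction; then $\pi$ is finite over the affine $Y\setminus H$ and the complement is affine. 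You reached precisely the point where this fact is needed (``a complete curve projecting finitely onto a curve in $M$'') but did not supply it.
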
 

For the proof note that in the first case, the complex space $\psi(Z_M) \simeq Z_M$ is not holomorphically convex \cite[V, \S 1, Thm.4]{GR04}, while in the second case we can apply \cite[3.14]{HP21}.

With this preparation, the strategy of the proof of Theorem \ref{theorem-V5} is clear:
in Section \ref{section-V5} we will construct by hand a weak Fano model of $(\PP(V),\PP(T_M))$. Thanks to the explicit construction we can verify that we are in the first case of the dichotomy \ref{prop:strat}.

\subsection{Future directions}

Let $M$ be a projective manifold such that a canonical extension $(\PP(V), \PP(T_M))$
admits a weak Fano model $\merom{\psi}{\PP(V)}{\PP(V)^-}$. Then the tangent bundle $T_M$ is big and nef in codimension one, i.e. the tautological bundle $\zeta_M \rightarrow \PP(T_M)$ is nef in codimension one.
If we allow $\psi $ to be the identity, the class  of manifolds such that $T_M$ is big and nef in codimension one (resp. nef in codimension one) contains in particular the manifolds $M$ such that $T_M$ is big and nef 
(resp. simply nef). 

\begin{problem}  \label{problem-codimension-1}
What are the projective manifolds $M$ such that $T_M$ is big and nef in codimension one, but not nef? 
\end{problem} 

By \cite[Cor.1.2]{HL21} there is exactly one Fano threefold $M$ with $\rho(M) = 1$ such that   $T_M$ big and nef in codimension one, but not nef, namely $M = V_5$.  The proof of Theorem \ref{theorem-V5} is quite involved and uses heavily the geometry of the del Pezzo threefold $V_5$. Thus, as a first step towards Problem \ref{problem-codimension-1}, one might wonder whether there is a cohomological proof. 
In this context, it follows from the work of J.Zhang \cite{Zha08} that $Z_M$ is not affine provided 
$$ 
H^{i}(Z_M, \Omega^j_M) \ne 0 
$$
some $i \geq 1, j \geq 0$ (for algebraic cohomology). 
These groups however seem hard to compute. 

Finally note that Conjecture \ref{MainConj} has the following analytic version:
 
\begin{conjecture}\label{ana} 
Let $M$ be a compact K\"ahler manifold such that the canonical extension is Stein for some K\"ahler class. 
Then $T_M$ is nef. 
\end{conjecture}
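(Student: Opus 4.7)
The conjecture is the analytic counterpart of Conjecture \ref{MainConj}: $M$ is only K\"ahler rather than projective, $Z_M$ is Stein rather than affine, and the conclusion is the weaker statement that $T_M$ is nef. Because Mori theory is not available in this generality, the proof must rely on analytic tools --- plurisubharmonic exhaustions on $Z_M$, $\bar\partial$-cohomology and $L^2$-methods on Stein spaces, and positivity of the extension class defining $V$. The plan is to test nefness of $T_M$ curve-by-curve and reduce the conjecture to a question about Stein complements of hyperplane divisors in $\PP^n$-bundles over compact curves.

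For the reduction, let $C \subset M$ be a compact irreducible curve with normalisation $f : \widetilde C \to C$. The pullback $\PP(f^*V) \setminus \PP(f^*T_M)$ maps properly and finitely onto $Z_M\vert_C \subset Z_M$, which is a closed analytic subspace; since closed analytic subspaces of Stein spaces are Stein, $\PP(f^*V) \setminus \PP(f^*T_M)$ is itself Stein. It therefore suffices to prove that Stein-ness of this affine bundle forces $f^*T_M$ to be nef for every choice of $C$.

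To connect Stein-ness with positivity, argue by contradiction: suppose $f^*T_M$ admits a line-bundle quotient $L$ of sufficiently negative degree, and consider the induced rank-two sub-extension $0 \to \sO_{\widetilde C} \to V' \to L \to 0$ of $f^*V$ obtained from the \textrm{Ext}-long exact sequence associated to $0 \to K \to f^*T_M \to L \to 0$. Its class lies in $H^1(\widetilde C, L^*) \cong H^0(\widetilde C, L \otimes K_{\widetilde C})^\vee$, which vanishes by Serre duality as soon as $\deg L + 2g(\widetilde C) - 2 < 0$. In that range $V' \cong \sO_{\widetilde C} \oplus L$ splits, and the $\sO_{\widetilde C}$-summand defines a section $\widetilde C \to \PP(V') \subset \PP(f^*V)$ avoiding $\PP(L) = \PP(V') \cap \PP(f^*T_M)$. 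Its image is a compact analytic subvariety of $\PP(f^*V) \setminus \PP(f^*T_M)$, contradicting Stein-ness. This handles the very negative case.

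The main obstacle is the remaining range $2 - 2g(\widetilde C) \leq \deg L < 0$, where the obstruction class need not vanish and the na\"\i ve splitting argument fails; unramified base change does not help because $\deg L$ and $\deg K_{\widetilde C}$ scale by the same factor. One would then have to produce compact \emph{multi-sections} out of sub-$\PP^k$-bundles of $\PP(f^*V)$ coming from the Harder-Narasimhan filtration of $f^*V$, or to extract the conclusion analytically by restricting a plurisubharmonic exhaustion of $Z_M$ to $Z_M\vert_{\widetilde C}$ and converting it into a singular hermitian metric on $f^*V$ whose curvature current dominates the negative part, forcing $\deg L \geq 0$ upon integration. A secondary difficulty, specific to the K\"ahler setting, is that some $M$ carry too few compact curves for the curve test to detect nefness; for those $M$ one expects $Z_M$ to never be Stein, so the hypothesis is vacuous, but a uniform treatment requires pluripotential input on $Z_M$ itself. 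We view the passage from plurisubharmonic exhaustion to curvature positivity of the extension as the central analytic difficulty.
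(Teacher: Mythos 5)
This statement is labelled a \emph{Conjecture} in the paper (Subsection 1.C, ``Future directions''): the authors offer no proof of it, so there is nothing to compare your argument against --- you are attempting an open problem. Your proposal is in any case not a proof, as you acknowledge yourself: the range $2-2g(\widetilde C)\leq \deg L<0$ and the case of K\"ahler manifolds with too few curves (where nefness in Demailly's sense is genuinely not detected by curves) are left open.

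More seriously, the one case you do claim to settle does not go through, for two independent reasons. First, the rank-two quotient $V'$ of $f^*V$ fitting into $0\to\sO_{\widetilde C}\to V'\to L\to 0$ exists only if the subbundle $K=\ker(f^*T_M\to L)$ lifts to a subbundle of $f^*V$; the obstruction is the image of the extension class $f^*\alpha\in H^1(\widetilde C,f^*\Omega^1_M)=\Ext^1(f^*T_M,\sO_{\widetilde C})$ in $H^1(\widetilde C,K^*)$. But in exactly the range where you invoke $H^1(\widetilde C,L^*)=0$, the long exact sequence of $0\to L^*\to f^*\Omega^1_M\to K^*\to 0$ shows that $H^1(\widetilde C,f^*\Omega^1_M)\to H^1(\widetilde C,K^*)$ is \emph{injective}, so the obstruction vanishes only if $f^*\alpha=0$ --- and it never does, since its image under $H^1(\widetilde C,f^*\Omega^1_M)\to H^1(\widetilde C,K_{\widetilde C})\simeq\C$ equals $\alpha\cdot[C]>0$ for a K\"ahler class $\alpha$. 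Second, and fatally for the whole strategy: the same trace computation, applied to the normalisation of any multisection, shows that $\PP(f^*V)\setminus\PP(f^*T_M)$ contains \emph{no} compact curve at all, for every curve $C\subset M$, every K\"ahler class, and regardless of whether $f^*T_M$ is nef (this absence of compact subvarieties of $Z_M$ is essentially \cite{GW20}). Hence ``existence of a compact subvariety'' can never produce the desired contradiction, and the curve-by-curve compact-subvariety test is structurally incapable of distinguishing nef from non-nef tangent bundles. Any proof of the conjecture must exploit Steinness through a strictly stronger mechanism (e.g.\ plurisubharmonic exhaustions converted into metrics on $V$, as you suggest in your closing paragraph --- but that step is precisely the open problem).
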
 

The structure of K\"ahler manifolds is well understood up to the case of Fano manifolds, \cite{DPS94}, in which case $M$ is conjectured to be rational homogeneous,
\cite{CP91}.  Thus, Conjecture \ref{ana} includes 

\begin{conjecture} Let $M$ be a Fano manifold. Assume that $Z_M$ is Stein for some K\"ahler class. Then $Z_M$ is affine. 
\end{conjecture}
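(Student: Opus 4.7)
The plan is to apply the general strategy sketched above: construct a weak Fano model $\psi : \PP(V) \dashrightarrow \PP(V)^-$ and then invoke the dichotomy of Proposition \ref{prop:strat}. Since $Z_M$ is Stein, \cite{GW20} yields that $T_M$, and therefore the canonical extension $V$, is big, so the tautological class $\zeta_V$ is big on $\PP(V)$. Because $-K_{\PP(V)} = (\dim M + 1)\,\zeta_V$, the manifold $\PP(V)$ is of Fano type and hence a Mori dream space, and the $\zeta_V$-maximizing MMP terminates on some birational model on which $\zeta_V$ is nef and big.

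The main obstacle is to arrange that this MMP consists only of antiflips, with no divisorial contractions, so that the output $(\PP(V)^-, \PP(T_M)^-)$ is genuinely a weak Fano model in the sense of Definition \ref{definition-weak-fano-model}. The natural approach is to analyse a $\zeta_V$-negative extremal ray $R \subset \NE{\PP(V)}$: since $\zeta_V$ is positive on the fibres of $\PP(V) \to M$, such a ray $R$ must be horizontal, and the corresponding extremal contraction has to be compatible both with the projective bundle structure over the Fano base $M$ and with the effective divisor $\PP(T_M) \in |\zeta_V|$. The rigidity provided by the canonical extension class, combined with the ampleness of $-K_M$, should suffice to exclude divisorial contractions. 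A parallel argument is needed to guarantee that $\psi$ restricts on $\PP(T_M)$ to a finite sequence of antiflips.

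With the weak Fano model in hand, $\psi$ is small and so $\psi_{|Z_M}$ is biholomorphic onto its image. Steinness of $Z_M$ implies $Z_M$ is holomorphically convex, and hence the first alternative in Proposition \ref{prop:strat} is impossible; thus $\psi(Z_M) = \PP(V)^- \setminus \PP(T_M)^-$, which gives $Z_M$ affine.

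The truly difficult step is the construction of the weak Fano model, in particular the exclusion of divisorial contractions. Even for $M = V_5$, Section \ref{section-V5} builds such a model by hand via a delicate geometric analysis, and a uniform treatment for arbitrary Fano $M$ seems out of reach at present. A complementary strategy, suggested in the discussion around Problem \ref{problem-codimension-1}, would be to bypass the MMP entirely and prove directly the cohomological vanishing $H^i(Z_M, \Omega^j_M) = 0$ for $i \geq 1$ from Steinness (comparing analytic and algebraic cohomology); by \cite{Zha08} this would also yield $Z_M$ affine.
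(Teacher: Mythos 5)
The statement you are asked about is one of the paper's open conjectures: the authors state it without proof, and the only case they actually establish is the threefold case (Theorem \ref{thm:Fano3}), which they prove not by the strategy you outline but by classification for $\rho(M)\geq 2$, by \cite[Cor.1.2]{HL21} for $\rho(M)=1$, and by the explicit $V_5$ construction of Section \ref{section-V5} --- where, notably, the weak Fano model is used to land in the \emph{first} branch of the dichotomy and conclude that $Z_M$ is \emph{not} Stein. Your proposal is therefore a programme, not a proof, and you say so yourself: the construction of a weak Fano model for an arbitrary Fano $M$ is exactly the content of the conjecture's difficulty and is left unresolved. A sketch whose central step is acknowledged to be ``out of reach at present'' cannot be accepted as a proof of the statement.

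Beyond the admitted gap, two specific steps as written would not survive scrutiny. First, the inference ``$-K_{\PP(V)}=(\dim M+1)\zeta_V$ is big, hence $\PP(V)$ is of Fano type and a Mori dream space'' is false: bigness of the anticanonical divisor does not imply Fano type (one needs a klt boundary $\Delta$ with $-(K+\Delta)$ nef and big), so the existence and termination of your $\zeta_V$-MMP is unjustified. Second, even granting a small modification $\psi$ onto a weak Fano model, smallness alone does not give that $\psi_{|Z_M}$ is biholomorphic: the flipping loci must be shown to lie inside $\PP(T_M)$, which is a separate hypothesis in Proposition \ref{prop:strat} and is verified by hand in the $V_5$ case (the flipping surface $S$ sits in $Y=\PP(T_M)$). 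The final paragraph of your argument (Steinness forces the second branch of the dichotomy, hence affineness) is sound \emph{conditional} on these inputs, and the cohomological alternative via \cite{Zha08} is a reasonable remark, but neither closes the conjecture.
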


{\bf Acknowledgements.} We thank Jie Liu for very useful discussions about $V_5$, see in particular Lemma \ref{liu}.  The first-named author thanks the Institut Universitaire de France for providing excellent working conditions.

\section{Notation and first results}

We work over the complex numbers, for general definitions we refer to \cite{Har77, Kau83}. 
Complex spaces and varieties will always be supposed to be irreducible and reduced. 

We use the terminology of \cite{Deb01, KM98}  for birational geometry and notions from the minimal model program. We follow \cite{Laz04} for algebraic notions of positivity.
For positivity notions in the analytic setting, we refer to \cite{Dem12}.

Given a vector bundle $V$ on a manifold $B$, we denote by $\PP(V) \rightarrow B$
the projectivisation in Grothendieck's sense, and by $\zeta_V = \zeta_{\PP(V)}:=\sO_{\PP(V)}(1)$ the tautological line bundle on $\PP(V)$. In the special case of the tangent bundle $T_M$ of a manifold $M$,
we denote the tautological bundle $\zeta_{\PP(T_M)}$ simply by $\zeta_M$.

Given any non-zero class $\alpha \in H^1(M,\Omega_M)$,  usually a K\"ahler class, we let 
$$ 0 \to \sO_M \to V \to T_M \to 0 $$
be the associated nonsplitting extension. We set 
$$ Z_M = \PP(V) \setminus \PP(T_M). $$ 

In this section we investigate the structure of Mori fibre spaces of projective manifolds $M$ such that $Z_M$ is Stein. 

\begin{theorem} \label{thm1a}   Let $M$ be a projective manifold of dimension $n$,  $S \subset M$ be a smooth projective surface. Let $C \subset S$ be a smooth rational curve with $C^2 < 0$ and
 let $\varphi: S \to S'$ be the contraction of $C$. Assume the following 
 \begin{enumerate} 
 \item  the restriction $N_{S/M} \otimes \sO_C $ 
is trivial, or, more generally, that $\varphi_* (N_{S/M}) $ is locally free, 
\item  the canonical morphism $R^1\varphi_*(T_S) \to R^1\varphi_*(T_M \otimes \sO_S)$ is injective. 
\end{enumerate} 
Then $Z_M$ is not Stein for any K\"ahler class.
\end{theorem}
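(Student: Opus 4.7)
The plan is to reduce to the surface $S$ by restriction, then push forward along $\varphi \colon S \to S'$ to compare the restricted canonical extension on $S$ with a pushed-forward extension on $S'$, and derive a contradiction.

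Since $S \subset M$ is closed and $Z_M \to M$ is a locally trivial affine bundle, the preimage $Z_M|_S = \PP(V|_S) \setminus \PP(T_M|_S)$ is a closed analytic subspace of $Z_M$; any compact positive-dimensional analytic subset of $Z_M|_S$ gives one in $Z_M$, so it suffices to show $Z_M|_S$ is not Stein. Restricting the canonical extension yields
$$0 \to \sO_S \to V|_S \to T_M|_S \to 0,$$
and one applies $\varphi_*$. Since $C \cong \PP^1$, $R^1\varphi_* \sO_S = 0$. Hypothesis (2) is equivalent to the vanishing of the connecting map $\varphi_* N_{S/M} \to R^1 \varphi_* T_S$ obtained from the pushforward of $0 \to T_S \to T_M|_S \to N_{S/M} \to 0$, so that the sequence $0 \to \varphi_* T_S \to \varphi_* T_M|_S \to \varphi_* N_{S/M} \to 0$ stays short exact; together with hypothesis (1) (local freeness of $\varphi_* N_{S/M}$) and the standard fact that $\varphi_* T_S$ is locally free for a contraction of a smooth rational curve on a smooth surface, this shows that $T' := \varphi_* T_M|_S$ and $V' := \varphi_* V|_S$ are locally free. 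One therefore obtains on $S'$ a short exact sequence
$$0 \to \sO_{S'} \to V' \to T' \to 0$$
and an affine bundle $Z' := \PP(V') \setminus \PP(T')$ over $S'$.

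Now compare $Z_M|_S$ with $Z'$ through the counit $\varphi^* V' \to V|_S$. This counit is an isomorphism on $S \setminus C$, but over $C$ it becomes a morphism of rank-$(n+1)$ bundles from the trivial bundle $V'|_p \otimes \sO_C$ to $V|_C$. Because $V|_C$ splits as a sum of line bundles on $\PP^1$ containing a summand of negative degree (coming from the $\sO(C^2)$ factor of $T_M|_C$), this morphism cannot be surjective and drops rank by exactly one along $C$. Via adjunction, the rank drop induces a natural birational morphism $\Phi \colon Z_M|_S \to Z'$ over $\varphi$ that is biholomorphic over $S' \setminus \{p\}$. Assume for contradiction that $Z_M|_S$ is Stein. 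A careful analysis of $\Phi$ over $p$ then produces either a compact projective-line fibre of $\Phi$ lying inside $Z_M|_S$ (arising from the projectivised kernel of the counit restricted to $C$), or a strict inclusion $\Phi(Z_M|_S) \subsetneq Z'$; in the first case $Z_M|_S$ contains a compact positive-dimensional analytic subset, while in the second case $Z_M|_S \cong \Phi(Z_M|_S)$ fails to be holomorphically convex by the Grauert--Remmert criterion invoked just after Proposition \ref{prop:strat}. Either way $Z_M|_S$, hence $Z_M$, is not Stein.

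The main obstacle is the last step: carrying out the fibre analysis of $\Phi$ over $p$ and verifying that one of the two failure modes actually occurs. This requires using the full strength of hypotheses (1) and (2) to pin down the precise structure of the counit $\varphi^* V' \to V|_S$ along $C$, together with the non-split nature of $V|_C$, which follows from the fact that the K\"ahler class $\alpha \in H^1(M, \Omega_M)$ restricts non-trivially to $C$.
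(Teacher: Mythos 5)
Your first half follows the paper: restrict the canonical extension to $S$, push forward along $\varphi$ using $R^1\varphi_*\sO_S=0$, and observe that hypothesis (2) is exactly what keeps the pushed-forward sequence short exact. But there is an error already here: the claim that $\varphi_*T_S$, and hence $T'$ and $V'$, are locally free is false in general. For $C^2\leq -2$ the point $s\in S'$ is a quotient singularity, $(\varphi_*T_S)^{**}\simeq T_{S'}$ is only reflexive there, and the paper accordingly never asserts local freeness; it passes to biduals and works with a diagram of reflexive sheaves, using hypothesis (1) (local freeness of $\varphi_*N_{S/M}$) only to split the columns locally so that bidualising preserves exactness. Your argument as written only covers the $(-1)$-curve case.

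The more serious issue is the final step, which you acknowledge you cannot complete, and which is where the paper's mechanism is genuinely different from yours. The paper does not construct a morphism $\Phi\colon \tilde Z_S\to \tilde Z_{S'}$ and does not analyse fibres over $s$. Instead it removes the Cartier divisor $\pi^{-1}(C)$ from $\tilde Z_S$ (the complement of a Cartier divisor in a Stein space is Stein, \cite[V, \S 1, Thm.1 d)]{GR04}), identifies $\tilde Z_S\setminus\pi^{-1}(C)\simeq \tilde Z_{S'}\setminus\tau^{-1}(s)$ honestly as complex spaces, and then invokes \cite[Lemma 4.9]{HP21} to see that the latter is not Stein. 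The hypothesis of that lemma --- the ``extension property'' of the sequence $0\to\sO_{S'}\to V'\to T'\to 0$ across the singular point $s$ --- is the real technical content of the proof (Claims A and B), and it is verified by comparing, after bidualising, with the canonical extension $0\to\sO_{S'}\to V_{S'}\to T_{S'}\to 0$ determined by the induced class $\zeta'$ on $S'$. Your proposed dichotomy does not substitute for this: the fibres of any contraction $\pi^{-1}(C)\cap\tilde Z_S\to\tau^{-1}(s)$ are slices of affine bundles and there is no reason for them to contain a compact $\PP^1$; and the ``strict inclusion implies not holomorphically convex'' branch requires precisely the Hartogs-type extension input across $\tau^{-1}(s)$ that the extension property encodes --- the Grauert--Remmert criterion cannot be applied to $\Phi(Z_M|_S)\subsetneq Z'$ without first controlling the structure of $Z'$ near $\tau^{-1}(s)$, i.e.\ without Claim A. So the missing ideas are (i) deleting the Cartier divisor $\pi^{-1}(C)$ to reduce to a punctured space that is literally isomorphic to $\tilde Z_{S'}\setminus\tau^{-1}(s)$, and (ii) the bidualisation argument establishing the extension property.
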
 

\begin{remark} \label{remark-thm1a} 
The technical conditions in Theorem \ref{thm1a} can be easily checked in a number of situations:
\begin{itemize}
\item If $C^2 = -1$, then $R^1\varphi_*(T_S) = 0$, so the second condition is trivial.
Thus if $N_{S/M} \otimes \sO_C$ is trivial, Theorem \ref{thm1a} applies. 
\item If $\dim M=3$ and $S \cdot C \leq 0, C^2=-1$, Theorem \ref{thm1a} also applies:
by what precedes this is clear if $S \cdot C=0$. If $S \cdot C<0$, then 
$N_{S/M} \otimes \sO_C \simeq N_{C/S}^{\otimes k}$ with $k \in \N$.
Therefore $N_{S/M} \otimes \sO_U \simeq \sO_U(kC) $ in a neighborhood $U$ of $C$, thus $\varphi_* (N_{S/M})$ is locally free. 
\item  If $\dim M=3$ and $S \cdot C < 0, C^2 \leq -2$, the same arguments applies 
provided $N_{S/M} \otimes \sO_C = N_{C/S}^{\otimes k}$  with $k \in \N$
\item The injectivity of the canonical morphism $R^1\varphi_*(T_S) \to R^1\varphi_*(T_M \otimes \sO_S)$ is satisfied if the tangent bundle sequence 
$$ 0 \to T_S \to T_M \otimes \sO_S \to N_{S/M} \to 0 $$
splits.
\end{itemize}
\end{remark} 

\begin{problem} \label{T1} If $\dim M=3$, generalize Theorem \ref{thm1a}  to the case that  $C$ is a smooth rational curve with $C^2 \leq  -2$ and $S \cdot C \leq 0$. 
\end{problem}

\begin{proof} [Proof of Theorem \ref{thm1a}]
The canonical extension defining $Z_M = \PP(V_M) \setminus \PP(T_M)$ restricts to an exact sequence
\begin{equation} \label{eq0}  0 \to \sO_S \to V_M \otimes \sO_S  \to T_M \otimes \sO_S \to 0 \end{equation} 
Applying $\varphi_*$ and using $R^1\varphi_*(\sO_S) = 0 $ yields an exact sequence
\begin{equation} \label{basic} 0 \to \sO_{S'} \to \varphi_*(V_M \otimes \sO_S) =: V'  \to \varphi_*(T_M \otimes \sO_S) =:T' \to 0.\end{equation} 

We set 
$$ \tilde Z_S := \mathbb P(V_M \otimes \sO_S) \setminus \mathbb P(T_M \otimes \sO_S)$$
and 
$$ \tilde Z_{S'} := \mathbb P(V') \setminus \mathbb P(T').$$ 

\vskip .2cm {\it Claim A.} Sequence (\ref{basic}) satisfies the extension property of \cite[Lemma 4.9]{HP21} near the singular point $s$ of $S'$. This will in turn be a consequence of \cite[Thm.4.5]{HP21}, Part 3. \\
Given this, we conclude as follows. 
Denote by $\pi: \tilde Z_S \to S$ and $\tau:  \tilde Z_{S'} \to S'$ the projections. 
Assume that $Z_M$ is Stein. Then the closed subspace  $\tilde Z_S \subset  Z_M$ is Stein,  hence   $\tilde Z_S \setminus \pi^{-1}(C)$ is Stein as well, since $\pi^{-1}(C)$ is a Cartier divisor \cite[V, \S 1, Thm.1 d)]{GR04}. 
On the other hand, 
$$ \tilde Z_S \setminus \pi^{-1}(C) \simeq \tilde Z_{S'} \setminus \tau^{-1}(s),$$
and $ \tilde Z_{S'} \setminus \tau^{-1}(s)$ is not Stein by \cite[Lemma 4.9]{HP21}, 
a contradiction. 

{\em Proof of Claim A.}
The extension class ${\zeta_M}_{\vert S} \in H^1(S, \Omega^1_M \otimes \sO_S) $ defining our given extension projects onto a class $\zeta_S \in H^1(S,\Omega^1_S) $ defining a canonical extension
$$ 0 \to \sO_S \to V_S \to T_S \to 0$$ 
such that the following diagram commutes

\vspace{-1ex}
\begin{equation} \label{bigdiagram1a}
\xymatrix{
&  &  0 \ar[d]  & 0 \ar[d]  & 
\\
0 \ar[r] & \sO_S \ar[d]^{=} \ar[r] & V_S  \ar[d]  \ar[r] & T_S \ar[d]  \ar[r] & 0
\\
0 \ar[r] & \sO_S \ar[r] & V_M \otimes \sO_S \ar[r] \ar[d] & T_M \otimes \sO_S \ar[r] \ar[d] & 0
\\
 & & N_{S/M} \ar[r]^-{=} \ar[d] &N_{S/M}   \ar[d] &  \\
 & & 0 &0 &
}
\end{equation} 

By assumption, the canonical morphism
$$
R^1\varphi_*(T_S) \to R^1\varphi_*(T_M \otimes \sO_S)
$$ 
is injective.
Thus pushing forward the whole diagram to $S'$ we obtain a commutative diagram of exact sequences
\begin{equation} \label{bigdiagram2a}
\xymatrix{
&  &  0 \ar[d]  & 0 \ar[d]  & 
\\
0 \ar[r] & \sO_{S'} \ar[d]^{=} \ar[r] & \varphi_* V_S  \ar[d]  \ar[r] & \varphi_* T_S \ar[d]  \ar[r] & 0
\\
0 \ar[r] & \sO_{S'} \ar[r] & \varphi_* (V_M \otimes \sO_S) \ar[r] \ar[d] & \varphi_* (T_M \otimes \sO_S) \ar[r] \ar[d] & 0
\\
 & & \varphi_* N_{S/M} \ar[r]^-{=} \ar[d] & \varphi_* N_{S/M}   \ar[d] &  \\
 & & 0  & 0&
}
\end{equation} 
Taking biduals this defines a commutative diagram of reflexive sheaves 
\begin{equation} \label{bigdiagram3a}
\xymatrix{
&  &  0 \ar[d]  & 0 \ar[d]  & 
\\
0 \ar[r] & \sO_{S'} \ar[d]^{=} \ar[r] & (\varphi_* V_S)^{**}  \ar[d]  \ar[r] & (\varphi_* T_S)^{**} \ar[d]  \ar[r] & 0
\\
0 \ar[r] & \sO_{S'} \ar[r] & (\varphi_* (V_M \otimes \sO_S))^{**} \ar[r] \ar[d] & (\varphi_* (T_M \otimes \sO_S))^{**} \ar[r] \ar[d] & 0
\\
 & &\varphi_*N_{S/M} =  (\varphi_* N_{S/M})^{**} \ar[r]^-{=} \ar[d] & (\varphi_* N_{S/M})^{**} = \varphi_*(N_{S/M}) \ar[d] &  \\
 & & 0 & 0 &
}
\end{equation}

\vskip .2cm 
{\it Claim B.} The sequences in this diagram are also exact.

{\em Proof of Claim B.} 
We start by observing that the first row is exact:  the class $\zeta_S$ determines canonically a class $\zeta'$ on $S'$, see \cite[4.7]{HP21}, thereby defining an extension of $T_{S'}$ by $\sO_{S'}$. Since $(\varphi_* T_S)^{**} \simeq T_{S'}$, the first line coincides
in codimension one with this extension, so they coincide.

Note also that the statement is clear in the complement of the point $s$, so we can replace $S'$
by a Stein neighbourhood $U'$ of the point $s$.
Recall that $\varphi_*N_{S/M}$ is locally  free.
Since the columns of \eqref{bigdiagram2a} are exact, they split on the Stein neighbourhood $U'$, i.e.
$$
\varphi_* (V_M \otimes \sO_S) \otimes \sO_{U'}
\simeq (\varphi_* V_S \otimes \sO_{U'}) \oplus \sO_{U'}^{\dim M-2}
$$
and analogously for $\varphi_* (T_M \otimes \sO_S) \otimes \sO_{U'}$. In particular taking biduals does not change the exactness of the columns. Thus we are left to show the exactness of the second row, but this follows from a diagram chase. This shows Claim B. 

The first row of Diagram (\ref{bigdiagram3a}) reads
$$ 0 \to \sO_{S'} \to V_{S'} \to T_{S'} \to 0,$$
hence has the extension property by \cite[Thm.4.5]{HP21}, proof of Part 3.
Thus the middle row of  Diagram (\ref{bigdiagram3a}) has the extension property as well. 
But restricted to $S' \setminus \{s\}$, this sequence is just Sequence (\ref{basic}). 
Hence Sequence (\ref{basic}) has the extension property, too, which settles Claim A and finishes the proof. 
\end{proof}

\begin{corollary} \label{cor1} 
Let $M$ be a projective manifold such that $Z_M$ is Stein for some K\"ahler class. 
Let $f: M \to N$ be a fibre space and $\dim N = \dim M - 2$. 
Then any smooth fibre $S$ is a minimal surface, i.e., does not contain any $(-1)$-curve. 

In particular, if $f$ is a Mori fibre space, then 
any smooth fibre of $f$ is either $\mathbb P^2$ or $\PP^1 \times \PP^1$. 
\end{corollary}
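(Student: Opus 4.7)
The plan is to argue by contradiction and apply Theorem \ref{thm1a} directly. Suppose that a smooth fibre $S$ contains a smooth rational curve $C$ with $C^2 = -1$, and let $\varphi : S \to S'$ be the contraction of $C$. I will verify that both hypotheses of Theorem \ref{thm1a} are satisfied, which will force $Z_M$ not to be Stein for any K\"ahler class, contradicting our assumption.

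For hypothesis (a), observe that $S$ is a smooth scheme-theoretic fibre of $f : M \to N$, so $f$ is smooth in a neighbourhood of $S$. Local parameters of $N$ at $n = f(S)$ therefore pull back to a regular sequence cutting out $S$ in $M$, and $N_{S/M}$ is a trivial bundle of rank $\dim M - 2$ on $S$; in particular $N_{S/M} \otimes \sO_C$ is trivial. For hypothesis (b), since $C^2 = -1$ the map $\varphi$ is the blow-down of $C$ to a smooth point, so $R^1\varphi_*(T_S) = 0$ and the required injectivity is vacuous, exactly as recorded in the first bullet of Remark \ref{remark-thm1a}. Theorem \ref{thm1a} therefore applies, yielding the desired contradiction, so $S$ contains no $(-1)$-curve and is minimal.

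For the final assertion, assume in addition that $f$ is a Mori fibre space. Since the relative dimension is two and $-K_M$ is $f$-ample, combining this with the triviality of $N_{S/M}$ and the adjunction formula gives $-K_S \simeq (-K_M)|_S$ ample, so $S$ is a del Pezzo surface. By the first part $S$ contains no $(-1)$-curve, and the only minimal del Pezzo surfaces are $\PP^2$ and $\PP^1 \times \PP^1$, as desired.

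The substantive work is already packaged in Theorem \ref{thm1a}, so no serious obstacle remains: the only points to verify are the (standard) triviality of the normal bundle of a smooth fibre and the classification of minimal del Pezzo surfaces. If one wanted to generalise in the direction of Problem \ref{T1}, allowing $C^2 \leq -2$ in Theorem \ref{thm1a} would enable the same strategy to exclude more general negative curves from fibres, but that sharpening is not needed here.
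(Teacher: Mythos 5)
Your proof is correct and follows essentially the same route as the paper: both reduce to Theorem \ref{thm1a} via the triviality of $N_{S/M}$ for a smooth fibre and the vanishing $R^1\varphi_*(T_S)=0$ for a $(-1)$-curve, as in the first bullet of Remark \ref{remark-thm1a}. Your write-up merely spells out the del Pezzo classification step that the paper leaves implicit.
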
 

\begin{proof} Since $S$ is a smooth fibre of the fibration $f$, the normal bundle $N_{S/M}$ is trivial. Thus Theorem \ref{thm1a} applies to any $(-1)$-curve in $S$, cf. Remark
\ref{remark-thm1a}. 
\end{proof}

\begin{corollary} \label{cor2} 
Let $M$ be a smooth projective threefold such that $Z_M$ is Stein for some K\"ahler class. Let $f: M \to N$ be an elementary Mori contraction to a surface $N$. 
Then $f$ is a $\PP^1$-bundle. 
\end{corollary}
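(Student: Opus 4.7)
The plan is to argue by contradiction: assuming some fibre of $f$ is singular, I would construct a smooth projective surface $S \subset M$ containing a $(-1)$-curve $C$ on which $N_{S/M}$ restricts trivially, and then invoke Theorem \ref{thm1a} (via the first bullet of Remark \ref{remark-thm1a}) to obtain that $Z_M$ is not Stein, contradicting the hypothesis.

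Recall first that by the classification of elementary Mori contractions from a smooth projective threefold (Mori, Ando), the base $N$ is a smooth surface and $f$ is a conic bundle, i.e., every scheme-theoretic fibre is a plane conic. It therefore suffices to show that no fibre of $f$ is singular. Suppose for contradiction that $F := f^{-1}(n)$ is singular at some point $p$. After an analytic change of coordinates, $M$ is locally cut out in $\AN^4$ by an equation of the form $xy + g(s,t) = 0$, where $s,t$ are pulled back from $N$ and $g(0,0)=0$. Smoothness of $M$ at $p$ forces $dg|_0 \ne 0$: in particular the non-reduced case $g \in \mathfrak{m}_0^2$ is excluded, so $F = C_1 + C_2$ is the union of two smooth rational curves meeting transversally at $p$, and the discriminant $\Delta = \{g=0\} \subset N$ is smooth at $n$.

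Set $C := C_1$. Using a sufficiently positive linear system on $N$, a Bertini argument produces a smooth projective curve $L \subset N$ through $n$ meeting $\Delta$ transversally at each intersection point; the local analysis then shows $S := f^{-1}(L)$ is a smooth surface. Now $S$ is a conic bundle over $L$ with reducible fibre $C_1 + C_2$ over $n$, so $C$ is a $(-1)$-curve in $S$. Moreover $N_{S/M} = f^*(N_{L/N})|_S$, and since $C$ is contracted by $f$, the restriction $N_{S/M} \otimes \sO_C$ is trivial. Thus the hypotheses of Theorem \ref{thm1a} are satisfied for the triple $(C, S, M)$ by the first bullet of Remark \ref{remark-thm1a}, yielding the desired contradiction; hence every fibre of $f$ is smooth and $f$ is a $\PP^1$-bundle.

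The main obstacle is the verification that singular fibres of a smooth conic bundle over a surface are reduced and consist of two transverse rational curves, which is settled by the explicit local equation above; the construction of a smooth $S$ of the required form then follows by a standard Bertini argument on $N$, using that $\Delta$ is smooth at $n$.
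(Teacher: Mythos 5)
Your strategy is the same as the paper's: reduce to a smooth surface $S=f^{-1}(L)$ containing an irreducible component $C$ of a line-pair fibre, observe that $C$ is a $(-1)$-curve in $S$ with $N_{S/M}\otimes\sO_C$ trivial (equivalently $S\cdot C=0$), and invoke Theorem \ref{thm1a} via Remark \ref{remark-thm1a}. The paper takes $L$ to be a general hyperplane section of $N$ and quotes Mori's classification for the fact that the singular fibres over codimension-one points of the discriminant are line pairs; otherwise the two arguments coincide.

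The one step that is not quite right is your claim that smoothness of $M$ at $p$ excludes non-reduced fibres. Your local normal form $xy+g(s,t)=0$ already presupposes that the fibre over $n$ is a rank-two conic: in that model the central fibre is $xy=0$, a reduced line pair no matter what $g$ is, and the condition $dg|_0\neq 0$ only governs smoothness of the discriminant at $n$, not reducedness of the fibre. Double-line fibres do occur for conic bundles with smooth total space (e.g. $x^2+sy^2+tz^2=0$ over $\AN^2$, whose fibre over the origin is a double line), so smoothness of $M$ does not rule them out; they are merely confined to the finitely many singular points of the discriminant curve. The gap is harmless for the corollary: since the discriminant is a divisor in $N$, it suffices to take $n$ a general point of it (or, as the paper does, to cut with a general hyperplane section of $N$), after which the fibre over $n$ really is a reduced transverse line pair and the rest of your argument goes through unchanged.
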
 

\begin{proof}
By \cite{Mor82}, the fibration $f$ is either a $\mathbb P^1$-bundle or a conic bundle with singular fibres, with singular fibres in codimension one being a line pair. 
Assume the latter and let $H \subset N$ be a general hyperplane section. Then $S = \varphi^{-1}(H)$ is a smooth surface, and $f_{\vert S}: S \to H$ contains line pairs as singular 
fibres. Choose a singular fibre $f^{-1}(x_0)$ and let $C \subset f^{-1}(x_0)$ be an irreducible component. Then $C \subset S$ is a $(-1)$-curve and $S \cdot C = 0$. 
But then by Theorem \ref{thm1a}, $Z_M$ cannot be Stein. Hence $f$ must be a $\mathbb P^1$-bundle.
\end{proof}

\section{The del Pezzo threefold of degree five} \label{section-V5}

We fix the following setup for the whole section:
we denote by 
$$
M := \mbox{Gr}(2,5) \cap \PP^6 \subset \PP^9
$$ 
the del Pezzo threefold of degree five, often quoted as $V_5$. 
We let 
$$Y = \PP(T_M)$$
with projection $\pi: \PP(T_M) \to M$ and tautological line bundle $\zeta_M = \sO_{\PP(T_M)}(1)$. 
We denote by $V$ the canonical extension and set
$$ X = \PP(V),$$
so that $Z_M = X \setminus Y$. 
Recall that $\zeta_V$ denotes the tautological line bundle on $X$ and that $Y \in \vert \zeta_V \vert$. 

Our goal is to prove

\begin{theorem} \label{theorem-V5-details}
Let  $M$ be the Fano threefold $V_5$. Then $Z_M$ is not Stein. 
More precisely, there exists an antiflip 
$$\psi:  \PP(V) \dasharrow \PP(V)^- $$ with the following properties
\begin{enumerate}
\item $\psi$ 
induces a biholomorphic map 
$$ Z_M \to \PP(V)^- \setminus (\PP(T_M)^- \cup A) $$
where $\PP(T_M)^- $ is the strict transform of $\PP(Z_M)$  and $A$ is (non-empty) analytic set of codimension two, which is not contained in $\PP(T_M)^-$;
\item  $\PP(T_M)^-$ is an antiflip of $\PP(T_M)$;
\item $\PP(V)^-$ and $\PP(T_M)^-$ are weak $\mathbb Q$-Fano;
\item  $\PP(V)^- \setminus \PP(T_M)^-$ is affine.
\end{enumerate}

\end{theorem}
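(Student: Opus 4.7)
The plan is to apply Proposition~\ref{prop:strat} and land in the first branch of its dichotomy, which directly yields that $Z_M$ fails to be holomorphically convex and hence is not Stein. Concretely, I construct by hand a small birational map $\psi:\PP(V)\dashrightarrow\PP(V)^-$ which is a finite sequence of antiflips, whose restriction $\psi|_{Y}$ is again a sequence of antiflips onto the strict transform $\PP(T_M)^-$, such that $-K_{\PP(V)^-}$ is nef and big, and such that the biholomorphic image $\psi(Z_M)$ is strictly smaller than $\PP(V)^-\setminus\PP(T_M)^-$ by a non-empty codimension-two analytic subset $A$.

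To locate the extremal ray of $\NE{X}$ to antiflip, I would use that $T_M$ is big by \cite[Cor.1.2]{HL21} but not nef (otherwise $V_5$ would be rational homogeneous). Consequently $\zeta_V$ is big but not nef, and any $\zeta_V$-negative curve in $X$ must lie in the non-nef locus of $\zeta_V$, which on $\PP(V)$ is contained in $Y=\PP(T_M)$ and there corresponds to $\zeta_M$-negative curves. These in turn come from the family of lines on $V_5$: the Hilbert scheme of lines $\mathcal{H}\simeq\PP^2$ contains a distinguished subfamily along which the splitting type of $T_M|_\ell$ jumps, producing a negative sub-line bundle and hence a negative section $\sigma_\ell\subset\PP(T_M|_\ell)\subset Y$. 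As $\ell$ varies, the $\sigma_\ell$ sweep out a surface $\Sigma\subset Y\subset X$ whose class generates a single extremal ray of $\NE{X}$. I would then run a $K_X$-negative MMP in reverse along this ray, producing $\psi:X\dashrightarrow X^-$; the construction is compatible with the inclusion $Y\subset X$, and yields $\PP(T_M)^-$ as the simultaneous antiflip of $Y$ along $\Sigma$.

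With the construction in place, items~(1)--(4) follow rather formally. Item~(2) is built in. Item~(3) follows from $-K_{X^-}=(\dim M+1)\zeta_V^-$ together with nefness (restored by the antiflip) and bigness of $\zeta_V^-$; the same identity gives $-K_{\PP(T_M)^-}$ nef and big. For~(4), since $\PP(T_M)^-\in|\zeta_V^-|$ is big and nef, the complement is affine by the usual base-point-freeness argument, compare \cite[3.14]{HP21}. For~(1), the exceptional fibres of $\psi^{-1}$ are by definition $\zeta_V^-$-positive curves, so they meet $\PP(T_M)^-$ properly and are not contained in it; their union $A\subset X^-$ is non-empty of codimension two and disjoint from a general point of $\PP(T_M)^-$. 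Because $\Sigma\subset Y$ and $Z_M=X\setminus Y$, the map $\psi|_{Z_M}$ is biholomorphic onto $(X^-\setminus\PP(T_M)^-)\setminus A$. This is precisely the first branch of Proposition~\ref{prop:strat}, so $Z_M\simeq\psi(Z_M)$ is an open subset of an affine variety with proper analytic complement of codimension two; it is therefore not holomorphically convex by \cite[V,\S 1,Thm.4]{GR04}, hence not Stein.

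The main obstacle is the explicit construction of the antiflip $\psi$: identifying the correct extremal ray, verifying that the resulting small contraction admits an antiflip in the terminal $\mathbb Q$-factorial category, and showing that the output $X^-$ is weak Fano rather than degenerating further. Because the canonical extension $V$ is a genuinely non-split extension whose class is the chosen K\"ahler class, one cannot reduce to the split situation $V=\sO_M\oplus T_M$, and the local structure of $X$ in a neighbourhood of $\Sigma$ has to be computed using the explicit Grassmannian description $V_5=\mathrm{Gr}(2,5)\cap\PP^6$ together with the fine geometry of lines on $V_5$.
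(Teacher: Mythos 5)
Your overall strategy coincides with the paper's: identify the $\zeta_M$-negative curves $\tilde l$ coming from the jumping lines on $V_5$, antiflip the surface they sweep out, and land in the first branch of Proposition~\ref{prop:strat}. However, the proposal has a genuine gap, and it is exactly the part you defer as ``the main obstacle'': the construction of the antiflip and the verification that the result is weak Fano \emph{are} the proof. These steps do not ``follow rather formally''. The small contraction $\varphi_V$ contracts the surface $S$ onto a curve (not a point), the conormal bundle along a contracted curve is $N^*_{S/X}\otimes\sO_{\tilde l}\simeq \sO_{\tilde l}(1)^{\oplus 2}\oplus\sO_{\tilde l}(2)^{\oplus 2}$, and no general MMP existence theorem hands you the antiflip with enough control to check the remaining properties. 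The paper builds it by hand as a relative Kawamata antiflip: two blow-ups $\mu_1,\mu_2$ (along $S$ and then along $T_1=\PP(T)$), followed by two Fujiki contractions $\mu_3,\mu_4$ (onto the fourfold $W$ and onto $C\times D$), each requiring the explicit conormal bundle computations of Propositions~\ref{prop:N2} and~\ref{prop:N3}. Likewise, nefness of $-K_{Y^-}$ is not ``restored by the antiflip'': the base locus of $|\zeta_M|$ has a second component $\PP(N_{B/M})$ over the closed orbit $B$, untouched by the modification, and nefness there (Step~1 of Proposition~\ref{proposition-final}) as well as on the two exceptional divisors is a nontrivial intersection-theoretic computation.

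There is also a specific incorrect inference in your argument for item~(1). You claim that because the exceptional curves of $\psi^{-1}$ are $\zeta_V^-$-positive and $\PP(T_M)^-\in|\zeta_V^-|$, they ``meet $\PP(T_M)^-$ properly and are not contained in it''. Positive intersection with a divisor does not preclude containment in that divisor (a line in a hyperplane of $\PP^n$ is $\sO(1)$-positive). The non-containment $A\not\subset\PP(T_M)^-$ is precisely what makes $Z_M$ non-Stein, and the paper has to prove it geometrically: Proposition~\ref{proposition-strict-transform} shows $Y_1\cdot\bar l=0$ for the $q_F$-fibres by tracing the quotient $N^*_{S/X}\to T\to\sO_{C\times\tilde l}(-5,1)$ and the extension $0\to\sO_C(-7)\to U_C\to\sO_C(-5)\to 0$, and Proposition~\ref{proposition-p3-bundle} handles the component $\tilde E_1$. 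Without these verifications the dichotomy of Proposition~\ref{prop:strat} could a priori land in the second branch, which would prove the opposite conclusion. So while your outline points at the right geometry, the substance of the theorem is missing.
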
 

In other words, $(\PP(V),\PP(T_M)) $ has the weak Fano model $(\PP(V)^-,\PP(T_M)^-)$.

\subsection{Geometry of  $\PP(T_M)$}

We denote by $\sO_M(1)$ the restriction of the tautological bundle to $M$. It is well-known \cite[Prop.3.2.4]{EncV} that $\sO_M(1)$ is the ample generator of $\pic(M)$. 

The threefold $M$ has been studied by many authors, e.g., by \cite{MU83, FN89, San14};
here we mainly follow the exposition in \cite[Ch.7]{CS16}. The manifold $M$ is almost homogeneous 
with automorphism group $\mbox{Aut}(M) \simeq \mbox{PSL}_2 \C$ \cite[Prop.7.1.10]{CS16}
and has three orbits \cite[Thm.7.1.4, Thm.7.1.9]{CS16}: 
\begin{itemize}
\item The unique closed orbit is 
a smooth rational curve $B$ which is a rational normal curve of degree six in
$M \subset \PP^6$. We have \cite[Lemma 7.2.8]{CS16}
\begin{equation} \label{normal-B}
N_{B/M} \simeq \sO_B(5)^{\oplus 2}.
\end{equation}
\item There is a two-dimensional orbit and we denote its closure by $\bar S$: it is obtained as the locus of lines $l \subset M$ that are tangent lines of $B$. 
By \cite[Cor.1.2]{FN89} these are exactly the lines $l \subset M$ such that
\begin{equation} \label{negative-splitting}
T_M \otimes \sO_l \simeq \sO_l(2) \oplus \sO_l(1) \oplus \sO_l(-1).
\end{equation}
The divisor $\bar S$ is an element of the anticanonical system $|\sO_M(2)|$.
\item The three-dimensional orbit is $M \setminus \bar S$.
\end{itemize}
The threefold $M \subset \PP^6$ is covered by lines, and its Fano surface
of lines is isomorphic to $\PP^2$ \cite[Thm.I]{FN89}, \cite[Rem.2.28]{San14}.
We denote by
\begin{equation} \label{universal-lines}
\holom{q}{\mathcal U}{\PP^2}
\end{equation}
the universal family of lines on $M$ and by $\holom{e}{\mathcal U}{M}$ the evaluation morphism. By \cite[Thm.7.1.9]{CS16}, \cite[Lemma 2.3]{FN89} the evaluation morphism has degree three
and is \'etale over $M \setminus \bar S$.

The lines of splitting type \eqref{negative-splitting} are parametrised by a smooth conic
$C \subset \PP^2$ \cite[Thm.7.1.9]{CS16}
and we denote by 
\begin{equation} \label{universal-negative-lines}
\holom{q|_{\mathcal U_C}}{\mathcal U_C}{C}
\end{equation}
the restriction of the universal family.
By \cite[Thm.7.1.9(iii)]{CS16} the lines contained in $\bar S$ are disjoint, 
so the evalution map 
$$
\holom{e_C}{\mathcal U_C}{\bar S}
$$
is injective. Thus we can identify $e_C$ with the normalisation of $\bar S$.
The singular locus of $\bar S$ is the curve $B$ \cite[Lemma 7.2.2(i)]{CS16},
so we see that $e_C$ is an isomorphism on $\mathcal U_C \setminus \tilde B$
where by $\tilde B \subset \mathcal U_C$ we denote the set-theoretical pre-image of the curve $B \subset \bar S$.
The surface $\mathcal U_C$ is isomorphic to a quadric surface $\PP^1 \times \PP^1$ \cite[Lemma 7.2.2(vi)]{CS16}. In order to keep the notation transparent we will write
\begin{equation} \label{notation-product}
\mathcal U_C \simeq C \times l,
\end{equation}
and identify the morphism $q|_{\mathcal U_C}$ with the projection $\holom{p_C}{C \times l}{C}$.
We also know \cite[Lemma 7.2.3(iii)]{CS16} that
\begin{equation} \label{classB}
\tilde B \in  \vert \sO_{C \times l}(1,1) \vert
\end{equation}
Since $K_{\bar S} \simeq \sO_{\bar S}$ by adjunction and $K_{C \times l} \simeq
\sO_{C \times l}(-2,-2)$, the map $e_C$ ramifies with multiplicity two
along $B$. Finally note that by \cite[Lemma 7.2.3]{CS16} we have
\begin{equation} \label{classOMone}
e_C^* \sO_M(1) \simeq \sO_{C \times l}(5,1).
\end{equation}

Observe that the line bundle $\zeta_M \otimes \pi^* \sO_M(1)$ is nef and globally generated \cite[Prop.6.1]{FL22}: indeed $\Omega^2_{\PP^9}(3)$ is globally generated, and we have a surjection
$$
\Omega^2_{\PP^9}(3) \otimes \sO_M \twoheadrightarrow \Omega^2_M(3) \simeq T_M(1).
$$
Denote by
\begin{equation} \label{define-varphiM}
\holom{\varphi_M}{\PP(T_M)}{\PP^N}
\end{equation}
the morphism defined by the global sections of $\zeta_M \otimes \pi^* \sO_M(1)$.

Let $l \subset M$ be a line parametrised by $C \subset \PP^2$, i.e., $l$ has splitting type 
\eqref{negative-splitting}. Then we denote by 
$$
\tilde l \subset \PP(T_M)
$$
the lifting given by the quotient line bundle
$T_M \otimes \sO_l \twoheadrightarrow \sO_l(-1)$.
Note that by construction
$$
c_1(\zeta_M \otimes \pi^* \sO_M(1)) \cdot \tilde l = 0,
$$
so the curve $\tilde l$ is contracted by $\varphi_M$ onto a point.
Let $$S \subset \PP(T_M)$$ be the union of the curves $\tilde l$.

We can get a better description of $S$ by looking at the
vector bundle $e_C^* T_M \otimes \sO_{\mathcal U_C}$:
the surface $\mathcal U_C$
is the ramification divisor of $e$, so the tangent map $T_e$ has rank two in the generic point of $\mathcal U_C$ and the image of $T_e|_{\mathcal U_C}$ is given by the image of 
$$
T_{\mathcal U_C} \rightarrow e^* T_M \otimes \sO_{\mathcal U_C}.
$$
Since $e_C$ is not immersive along the  curve $\tilde B \subset \mathcal U_C$ and  since $\tilde B$ is embedded into $M$ via $e$,
the map $T_{\mathcal U_C} \rightarrow e^* T_M$ has rank one along $\tilde B$. 
Thus the image of the injective map $T_{\mathcal U_C} \rightarrow e^* T_M$ is not saturated,
and we denote by
$$
K \subset e^* T_M \otimes \sO_{\mathcal U_C}
$$
the saturation. Since $T_{\mathcal U_C} \simeq T_{C \times l} \simeq T_{\mathcal U_C/C} \oplus p_C^* T_C$ and the evaluation map $e$
is immersive on the fibres of the universal family (the images are lines), the map
$$
T_{\mathcal U_C/C} \rightarrow e^* T_M \otimes \sO_{\mathcal U_C}
$$
has rank one in every point. Therefore we have an extension
$$
0 \rightarrow T_{\mathcal U_C/C} \rightarrow K \rightarrow L \rightarrow 0
$$
with $L$ a reflexive sheaf of rank one and a non-zero morphism $p_C^* T_C \rightarrow L$. 
Since $p_C^* T_C \rightarrow L$ vanishes along $B$ 
and $\sO_{\mathcal U_C}(B) \simeq \sO_{C \times l}(1,1)$ we have $L \simeq \sO_{C \times l}(2,0) \otimes \sO_{C \times l}(m,m)$ with $m$ the vanishing order. We claim that $m=1$:
restricting to a line $l \subset \mathcal U_C$ we have
$$
T_{\mathcal U_C} \otimes \sO_l \simeq \sO_l(2) \oplus \sO_l \rightarrow e^* T_M \otimes \sO_l \simeq 
\sO_l(2) \oplus \sO_l(1) \oplus \sO_l(-1).
$$
Since $K$ is the saturation of $T_{\mathcal U_C}$ we obtain that
$K \otimes \sO_l \simeq \sO_l(2) \oplus \sO_l(1)$.
Thus we have $m=1$.

The following statement summarises our construction:

\begin{lemma}  \label{lemma-zetaM}
The evaluation map $\holom{e_C}{\mathcal U_C}{\bar S \subset M}$
factors through a morphism $\holom{\tilde e_C}{\mathcal U_C}{S \subset \PP(T_M)}$ such that for a line $l \subset \bar S$, the set-theoretical preimage
$\tilde l \subset S$ is given by the quotient line bundle
$$
T_M \otimes \sO_l \twoheadrightarrow \sO_l(-1).
$$
Moreover we have $\tilde e_C^* \zeta_M \simeq \sO_{C \times l}(7,-1)$.
\end{lemma}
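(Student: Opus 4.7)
The plan is to extract $\tilde e_C$ directly from the saturated subbundle $K \subset e_C^* T_M \otimes \sO_{\mathcal U_C}$ just constructed in the discussion preceding the statement. Since $K$ has rank two and is saturated inside a rank three bundle, the quotient $Q := (e_C^* T_M \otimes \sO_{\mathcal U_C})/K$ is a line bundle, and by the universal property of $\PP(T_M)$ in Grothendieck's convention the surjection $e_C^* T_M \otimes \sO_{\mathcal U_C} \twoheadrightarrow Q$ determines a section
$$
\sigma : \mathcal U_C \to \PP(e_C^* T_M \otimes \sO_{\mathcal U_C}) = \mathcal U_C \times_M \PP(T_M)
$$
along which the tautological bundle pulls back to $Q$. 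I would then define $\tilde e_C$ as the composition of $\sigma$ with the second projection, so that $\pi \circ \tilde e_C = e_C$ and $\tilde e_C^* \zeta_M \simeq Q$ tautologically.

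To see that $\tilde e_C$ lands in $S$ and that the set-theoretical fibre over a line $l \subset \bar S$ matches the curve $\tilde l$ cut out by $T_M \otimes \sO_l \twoheadrightarrow \sO_l(-1)$, I would restrict to a ruling $\{s_0\} \times l \simeq \PP^1$ of $C \times l$. On such a ruling $e_C^* T_M$ is just $T_M \otimes \sO_l \simeq \sO_l(2) \oplus \sO_l(1) \oplus \sO_l(-1)$ by \eqref{negative-splitting}, whereas the computation preceding the statement gave $K|_l \simeq \sO_l(2) \oplus \sO_l(1)$. Hence $Q|_l \simeq \sO_l(-1)$, so the restriction of $\tilde e_C$ to the ruling is precisely the section corresponding to $T_M \otimes \sO_l \twoheadrightarrow \sO_l(-1)$, which by definition traces out $\tilde l$. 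Letting $s_0$ vary over $C$ identifies the image of $\tilde e_C$ with $S$.

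For the final bidegree identity I would pass to determinants. From $0 \to K \to e_C^* T_M \otimes \sO_{\mathcal U_C} \to Q \to 0$ one has $Q \simeq \det(e_C^* T_M) \otimes (\det K)^{-1}$. Using $-K_M = 2 \sO_M(1)$ together with \eqref{classOMone}, one computes $\det(e_C^* T_M) = e_C^* \sO_M(2) \simeq \sO_{C \times l}(10,2)$. The extension $0 \to T_{\mathcal U_C/C} \to K \to L \to 0$ gives $\det K \simeq T_{\mathcal U_C/C} \otimes L$, and substituting $T_{\mathcal U_C/C} \simeq \sO_{C \times l}(0,2)$ and $L \simeq \sO_{C \times l}(3,1)$ (using the value $m = 1$ established just above) yields $\det K \simeq \sO_{C \times l}(3,3)$. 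Putting everything together gives $\tilde e_C^* \zeta_M \simeq Q \simeq \sO_{C \times l}(7,-1)$, as required.

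I do not anticipate any serious obstacle: the heavy lifting was already carried out in constructing the saturation $K$ and in pinning down the integer $m = 1$ that determines $L$. What remains is an application of the universal property of the projective bundle followed by a determinant bookkeeping.
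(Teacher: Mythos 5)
Your proposal is correct and follows essentially the same route as the paper: both define $\tilde e_C$ from the line-bundle quotient $Q = (e_C^* T_M \otimes \sO_{\mathcal U_C})/K$ via the universal property of the projectivised bundle, identify $Q$ as $\sO_{C \times l}(7,-1)$ by the determinant computation $\det(e_C^*T_M) \otimes (\det K)^{-1} = \sO_{C\times l}(10,2) \otimes \sO_{C\times l}(-3,-3)$, and read off the fibre over a line from $Q|_l \simeq \sO_l(-1)$. The only cosmetic difference is that you make the determinant bookkeeping fully explicit where the paper compresses it into one sentence using $e_C^*\omega_M^* \simeq \sO_{C \times l}(10,2)$.
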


\begin{remark*}
At this point we do not know
whether $\mathcal U_C \rightarrow S$ is an isomorphism, this will be shown in Lemma \ref{lemma-exceptional-locus-varphiM}.
\end{remark*}

\begin{proof}
By what precedes we have an extension
\begin{equation} \label{tangent-over-UC}
0 \rightarrow K \rightarrow e_C^* T_M \otimes \sO_{\mathcal U_C} \rightarrow Q \rightarrow 0
\end{equation}
where $K \subset e_C^* T_M \otimes \sO_{\mathcal U_C}$ is a rank two subbundle given by an extension
\begin{equation} \label{extension-K}
0 \rightarrow \sO_{C \times l}(0,2) \rightarrow K \rightarrow \sO_{C \times l}(3,1) \rightarrow 0.
\end{equation} 
Since by \eqref{classOMone} one has $e_C^* \omega_M^*= e_C^* \sO_M(2H) = \sO_{C \times l}(10,2)$, we obtain 
that $Q \simeq \sO_{C \times l}(7,-1)$. Let $\holom{\tilde e_C}{\mathcal U_C}{S \subset \PP(T_M)}$ be the map determined by the quotient $e_C^* T_M \rightarrow Q$.
Then $\tilde e_C$ factors $e_C$ and by the universal property of the tautological bundle one has $\tilde e_C^* \zeta_M \simeq \sO_{C \times l}(7,-1)$.
Note also that the restriction of $Q$ to a line $l \subset \mathcal U_C$
is $\sO_l(-1)$. Since $l$ identifies to its image in $\bar S$, we see that
$\tilde e_C(l)$ corresponds to the lifting $T_M \otimes \sO_l \twoheadrightarrow \sO_l(-1)$.
\end{proof}

\begin{lemma}(Jie Liu) \label{liu}
The morphism $\varphi_M$ contracts exactly the curves $\tilde l$ onto points.
\end{lemma}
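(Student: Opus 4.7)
The plan is to identify intersection-theoretically the irreducible curves $\gamma \subset \PP(T_M)$ satisfying $L \cdot \gamma = 0$, where $L := \zeta_M \otimes \pi^* \sO_M(1)$; by nefness and base-point-freeness of $L$, this is the condition for $\gamma$ to be contracted by $\varphi_M$. First I would dispose of curves contained in a fibre $\pi^{-1}(x) \simeq \PP^2$: then $\pi^* \sO_M(1) \cdot \gamma = 0$, but $\zeta_M|_{\pi^{-1}(x)} \simeq \sO_{\PP^2}(1)$ forces $\zeta_M \cdot \gamma > 0$, a contradiction. Hence $C' := \pi(\gamma) \subset M$ is a curve.

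Let $\nu : \tilde\gamma \to \gamma$ be the normalization and $g := \pi \circ \nu : \tilde\gamma \to M$, factored as $\tilde\gamma \xrightarrow{\mu} \tilde C \xrightarrow{f} M$ through the normalization $\tilde C$ of $C'$; set $e := \deg \mu$, $d := \sO_M(1) \cdot C' \geq 1$, and $\delta := ed = \deg g^* \sO_M(1)$. By the universal property of the projectivization, the map $\tilde\gamma \to \PP(T_M)$ corresponds to a line-bundle quotient $g^* T_M \twoheadrightarrow \mathcal L$ with $\deg \mathcal L = \zeta_M \cdot \gamma = -\delta$. Crucially, $T_M \otimes \sO_M(1)$ is globally generated (via the surjection $\Omega^2_{\PP^9}(3) \otimes \sO_M \twoheadrightarrow T_M(1)$), so the twisted quotient $g^* T_M(1) \twoheadrightarrow \mathcal L \otimes g^* \sO_M(1)$ is a globally generated line bundle of degree $0$ on the smooth projective curve $\tilde\gamma$, hence trivial; this forces $\mathcal L \simeq g^* \sO_M(-1)$. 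The problem thus reduces to classifying irreducible curves $C' \subset M$ whose normalization admits a surjection $f^* T_M \twoheadrightarrow f^* \sO_M(-1)$.

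For a line $C' = l$ (the case $d = 1$), the splitting classification \eqref{negative-splitting} settles matters at once: such a quotient exists exactly when $l$ is a special line parametrized by $C \subset \PP^2$, and it is then unique up to scalar, recovering the lift $\tilde l$ from Lemma \ref{lemma-zetaM}. The main obstacle is ruling out $d \geq 2$. For $C' \subset \bar S$, the lift $\gamma$ is forced into the surface $S \subset \PP(T_M)$ (the required $\sO(-1)$-quotient at each $x \in \bar S$ is realised by the special line through $x$); then via \eqref{notation-product} and \eqref{classOMone} one computes $L|_S \simeq \sO_{C \times l}(12,0)$, whose degree on an irreducible curve in $S \simeq C \times l$ of class $(a,b)$ equals $12b$, vanishing only on the $l$-fibres, which are the $\tilde l$'s. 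The remaining case $C' \not\subset \bar S$ is the genuinely delicate point: here $C'$ meets the open $\mathrm{PSL}_2(\bC)$-orbit $M \setminus \bar S$, and I would combine the global generation of $T_M(1)$ with a deformation of $C'$ to a free rational curve through a general point of $V_5$, together with semicontinuity of splitting types, to conclude that $f^* T_M$ cannot admit a quotient of the very negative degree $-\delta \leq -2$. Combining the cases leaves only the $\tilde l$'s as contracted curves.
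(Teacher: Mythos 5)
Your reduction is a good start and runs parallel to the paper's proof: the paper also disposes of fibre curves and of curves not lying over $\bar S$ via global generation, and your observation that global generation of $T_M(1)$ forces the tautological quotient on the normalisation to be exactly $g^*\sO_M(-1)$ is a clean reformulation. However, the two places where you leave the argument open are precisely the places where something must be proved, and in both cases what you propose is either insufficient or misdirected.

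First, the case $C' \not\subset \bar S$ is not ``the genuinely delicate point'': it follows immediately from your own reduction. Since $M \setminus \bar S$ is the open orbit, $T_M$ (untwisted) is globally generated there, so the quotient $\mathcal L$ of $g^*T_M$ is generically globally generated and hence has $\deg \mathcal L \geq 0$, contradicting $\deg \mathcal L = -\delta < 0$. This is the one-line argument the paper uses. By contrast, the route you sketch --- deforming $C'$ to a free rational curve and invoking semicontinuity of splitting types --- does not work as stated: $C'$ need not be rational, and semicontinuity of splitting types concerns families of rational curves, so there is no mechanism to transfer information from a free deformation back to $C'$. Second, and more seriously, in the case $C' \subset \bar S$ your parenthetical justification for ``$\gamma$ is forced into $S$'' only shows that the point of $S$ over $x$ \emph{does} give an $\sO(-1)$-type quotient; it does not show it is the \emph{only} possible value of the quotient, and indeed no pointwise argument can, since at each $x$ every one-dimensional quotient of $T_{M,x}$ occurs for some point of the fibre $\PP(T_{M,x})$. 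The constraint is global: one must lift $\tilde\gamma$ to the normalisation $\mathcal U_C$ of $\bar S$ and use the filtration \eqref{tangent-over-UC}, where the rank-two subbundle $K$ is nef by \eqref{extension-K}, so $K \otimes e_C^*\sO_M(1)$ is ample by \eqref{classOMone}; any map from an ample bundle to the degree-zero line bundle $\mathcal L \otimes g^*\sO_M(1)$ vanishes, so the quotient factors through $Q \otimes e_C^*\sO_M(1) \simeq \sO_{C\times l}(12,0)$, which both places the curve in $S$ and immediately yields your final degree computation. Without this (or an equivalent) argument, the key step of your proof is unsupported.
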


This statement was communicated to us by Jie Liu, his proof uses the blow-up construction from \cite[Sect.7.2]{CS16}. For our purpose it is more convenient to use the description of $e^* T_M \otimes \sO_{\mathcal U_C}$.

\begin{proof}
Let $\tilde D \subset \PP(T_M)$ be a curve contracted by $\varphi_M$ and
set $D:=\pi(\tilde D)$. 

Assume first that $D \not\subset \bar S$. Since $T_M$ is globally generated in the complement of $\bar S$, we have $c_1(\zeta_M) \cdot \tilde D \geq 0$ and
hence $c_1(\zeta_M \otimes \pi^* \sO_M(1)) \cdot \tilde D>0$, a contradiction.

Thus we may assume that $D \subset \bar S$. By \eqref{tangent-over-UC}
we have an extension
$$
0 \rightarrow K \otimes e_C^* \sO_M(1)
\rightarrow e_C^* (T_M \otimes \sO_M(1)) \rightarrow
Q \otimes e_C^* \sO_M(1)
\rightarrow 0.
$$
Since $e_C$ is finite and $K$ is nef by \eqref{extension-K}, 
the vector bundle $K \otimes e_C^* \sO_M(1)$ is ample. Thus the non-ample locus of $\zeta_M \otimes \pi^* \sO_M(1)$ is the image 
of $\PP(Q \otimes e_C^* \sO_M(1)) \subset \PP(e_C^* (T_M \otimes \sO_M(1)))$.
By construction this is exactly the surface $S$, so we have $D \subset S$. By Lemma \ref{lemma-zetaM} we have
$$
Q \otimes e_C^* \sO_M(1) \simeq \sO_{C \times l}(7,-1) \otimes \sO_{C \times l}(5,1) \simeq
\sO_{C \times l}(12,0),
$$
so $\zeta_M \otimes \pi^* \sO_M(1)$ restricted to $S$ is nef, not big and has
degree zero exactly on the curves $\tilde l$.
\end{proof}

\begin{lemma} \label{lemma-exceptional-locus-varphiM}
We have 
$$
\mbox{\rm Exc}(\varphi_M) = S \simeq \PP^1 \times \PP^1,
$$ i.e. the surface $S$ is smooth and $\holom{\tilde e_C}{\mathcal U_C}{S}$
is an isomorphism. 
\end{lemma}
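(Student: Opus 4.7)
The plan is to reduce the assertion to showing that $\tilde e_C: \mathcal U_C \to S$ is an isomorphism, since we already have $\mathcal U_C \simeq \PP^1 \times \PP^1$ by \eqref{notation-product} and the set-theoretic equality $\mbox{Exc}(\varphi_M) = S$ from Lemma \ref{liu}. I would prove this by verifying two things: that $\tilde e_C$ is bijective on points, and that its differential is everywhere injective. Because $\tilde e_C$ factors through the closed embedding $\mathcal U_C \hookrightarrow \PP(e_C^* T_M)$ determined by the quotient $e_C^* T_M \twoheadrightarrow Q$, followed by the finite projection $\PP(e_C^* T_M) \to \PP(T_M)$, it is itself finite; a finite bijective immersion is a closed embedding, and its image is then smooth and isomorphic to $\mathcal U_C$.

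For bijectivity, on $\mathcal U_C \setminus \tilde B$ the map $e_C$ is an isomorphism onto $\bar S \setminus B$ and the $\sO(-1)$-quotient of $T_M|_l$ at a given point is canonically determined, so $\tilde e_C$ is injective there. For the behaviour along $\tilde B$, I would exploit the class \eqref{classB}: since $\tilde B \in |\sO_{C \times l}(1,1)|$ is a smooth rational curve with both projections $p_C|_{\tilde B}$ and $p_l|_{\tilde B}$ isomorphisms, it is the graph of an isomorphism $C \to l$. A degree computation via \eqref{classOMone} then yields
\[
\deg\bigl(e_C^* \sO_M(1)|_{\tilde B}\bigr) = \sO_{C \times l}(5,1) \cdot \sO_{C \times l}(1,1) = 6,
\]
matching $\deg \sO_M(1)|_B = 6$, since $B \subset \PP^6$ is a rational normal sextic. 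Hence $e_C|_{\tilde B}: \tilde B \to B$ has degree one and is an isomorphism, so each point of $B$ lies on a unique line of negative splitting type, and $\tilde e_C|_{\tilde B}$ is injective.

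The crux is the immersion property at points of $\tilde B$. Away from $\tilde B$ the map $e_C$ is immersive, so $d\tilde e_C$ inherits injectivity. At $p \in \tilde B$, the differential $de_C|_p$ has rank one: the map $p_C^* T_C \to L$ in the construction of $K$ vanishes exactly along $\tilde B$, so the kernel of $de_C|_p$ is the horizontal direction $T_{c_0} C$. The vertical component of $d\tilde e_C(\partial/\partial c)$ coincides with the Kodaira--Spencer derivative of the varying subspace $K|_q \subset T_M|_{e_C(q)}$ in the direction $\partial/\partial c$, viewed as an element of $\Hom(K|_p, Q|_p)$. A local computation, exploiting that $K$ is the saturation of the image of $T_{\mathcal U_C}$ in $e_C^* T_M$ and that the extension \eqref{extension-K} is non-split, should show that this derivative is non-zero. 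I expect this non-vanishing check at the ramification curve $\tilde B$ to be the technical heart of the argument; once it is in place, combining it with the bijectivity from the previous paragraph concludes that $\tilde e_C$ is the desired isomorphism onto the smooth surface $S \simeq \PP^1 \times \PP^1$.
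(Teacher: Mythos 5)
Your overall framework (finite $+$ injective $+$ everywhere immersive $\Rightarrow$ closed embedding, hence $S$ smooth and $\tilde e_C$ an isomorphism) is sound, and the bijectivity part is fine --- indeed injectivity of $\tilde e_C$ is immediate from the injectivity of $e_C$, since $\pi \circ \tilde e_C = e_C$, so the degree count along $\tilde B$ is only a consistency check. But there is a genuine gap exactly where you locate ``the technical heart'': the non-vanishing of the vertical component of $d\tilde e_C$ at points of $\tilde B$ is never established. This is not a routine verification that can be deferred; it is the entire content of the smoothness assertion, since away from $\tilde B$ everything is clear and the only question is whether $S$ acquires singularities along $\tilde e_C(\tilde B)$. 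Saying that a local computation ``should'' show that the derivative of the hyperplane $K_q \subset T_{M, e_C(q)}$ is non-zero, exploiting that the extension \eqref{extension-K} is non-split, is a statement of intent rather than a proof; note moreover that nothing in the construction so far establishes that \eqref{extension-K} is non-split, so even the input you propose to feed into that computation is not available.

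The paper avoids this local analysis entirely by a global argument. By Lemma \ref{liu}, $\varphi_M|_S : S \to \varphi_M(S)$ is a fibration onto a curve whose set-theoretic fibres are the curves $\tilde l$; since the bijective map $\mathcal U_C \to S$ exhibits $\PP^1 \times \PP^1$ as the normalisation of $S$, the composition $\PP^1\times\PP^1 \to S \to \varphi_M(S)$ is a projection and $\varphi_M(S)$ is a smooth rational curve, and every fibre meets the smooth locus of $S$. If $p \in S$ were a singular point, the scheme-theoretic fibre $\varphi_M^*\varphi_M(p)$ would be a Cartier divisor through $p$, hence singular at $p$; but it is generically reduced with irreducible support $\tilde l \simeq \PP^1$, hence reduced and smooth --- a contradiction. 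Smoothness of $S$ then upgrades the bijective normalisation $\mathcal U_C \to S$ to an isomorphism. If you wish to keep your differential-geometric route you must actually carry out the derivative computation along $\tilde B$; otherwise the fibration/Cartier-divisor argument is both shorter and complete.
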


\begin{proof}
By Lemma \ref{liu}, the first statement is clear. Since $\varphi_M$ contracts no other curves,
the image $\varphi_M(S)$ has dimension one, so $S \rightarrow \varphi_M(S)$
is a fibration with set-theoretical fibres the curves $\tilde l$.
Since $\mathcal U_C \rightarrow \bar S$ is bijective, the normalisation of $S$ is isomorphic to $\PP^1 \times \PP^1$, so
$$
\PP^1 \times \PP^1 \rightarrow S \rightarrow \varphi_M(S)
$$
identifies to one of the projections. Thus $\varphi_M(S)$ is a smooth rational curve: in fact, ${\rm PSL_2}\mathbb C$ acts on the image of $\varphi_M$ and $\varphi_M(S)$ is 
an orbit as image of the $1$-dimensional orbit $B$. 
We also know that $S$ is smooth in the complement of the preimage of $B \subset \bar S$ and this preimage maps surjectively
onto $\varphi_M(S)$, so every fibre of $S \rightarrow \varphi_M(S)$ meets the smooth locus
of $S$.

We now argue by contradiction: if $p \in S$ is a singular point, the fibre $\varphi_M^* \varphi_M(p)$ is a Cartier divisor meeting a singular point, hence is singular. Yet $\varphi_M^* \varphi_M(p)$
is isomorphic to $\tilde l$ in its general point. Thus the Cartier divisor $\varphi_M^* \varphi_M(p)$ is reduced and coincides with the set-theoretical fibre $\tilde l$.
Thus $\varphi_M^* \varphi_M(p)$ is smooth, a contradiction.
\end{proof}

\begin{remark*} 
In order to keep the notation transparent, we will write
$$
S \simeq C \times \tilde l,
$$
and identify $\varphi_M|_S$ with the projection $\holom{p_C}{C \times \tilde l}{C}$.
\end{remark*}

The base locus $Bs\vert \zeta_M \vert$ of $\vert \zeta_M \vert $ is easy to compute:

\begin{lemma} \label{lem:base} 
$Bs\vert \zeta_M\vert = S\cup \mathbb P(N_{B/M}).$ 
\end{lemma}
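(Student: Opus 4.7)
The plan is to identify $\mathrm{Bs}\,|\zeta_M|$ orbit by orbit, exploiting the isomorphism $H^0(\PP(T_M), \zeta_M) \simeq H^0(M, T_M)$ and the almost homogeneity of $M$. Since $\mathrm{Aut}(M) \simeq \mathrm{PSL}_2\mathbb C$ is three-dimensional and connected \cite[Prop.7.1.10]{CS16}, one has $\dim H^0(M, T_M) = 3$. A point of $\PP(T_M)$ over $x \in M$ corresponds to a hyperplane $H \subset T_{M,x}$ and lies in $\mathrm{Bs}\,|\zeta_M|$ precisely when $K_x \subset H$, where $K_x \subset T_{M,x}$ denotes the image at $x$ of the evaluation map $H^0(M,T_M) \otimes \sO_M \to T_M$. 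By $\mathrm{PSL}_2$-equivariance $K_x$ coincides with the tangent space to the orbit through $x$, so $\dim K_x$ equals $3, 2, 1$ on the orbits $M \setminus \bar S$, $\bar S \setminus B$, $B$ respectively.

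The analysis over the open orbit and over $B$ is then immediate: on $M \setminus \bar S$ the space $K_x = T_{M,x}$ is contained in no hyperplane, so there are no base points; over $x \in B$ one has $K_x = T_{B,x}$, and the hyperplanes containing $T_{B,x}$ are in natural bijection with the one-dimensional quotients of $N_{B/M,x} = T_{M,x}/T_{B,x}$, yielding the base locus $\PP(N_{B/M})$.

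The heart of the argument is to identify the base locus over $\bar S \setminus B$ with $S|_{\bar S \setminus B}$. Given $x \in \bar S \setminus B$, by \cite[Thm.7.1.9(iii)]{CS16} there is a unique line $l \subset \bar S$ through $x$, and $l$ has the splitting \eqref{negative-splitting}. Because $H^0(\sO_l(-1)) = 0$, every $s \in H^0(M, T_M)$ restricts to $l$ with trivial $\sO_l(-1)$-component; hence $s(x)$ lies in the two-dimensional subspace $(\sO_l(2) \oplus \sO_l(1))|_x \subset T_{M,x}$. By construction this is exactly the hyperplane defining the lift $\tilde l(x) \in S$ from Lemma \ref{lemma-zetaM}. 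Thus $K_x$ is contained in this hyperplane, and since $\dim K_x = 2$, equality holds and $\tilde l(x)$ is the unique base point of $|\zeta_M|$ over $x$. Varying $x$ yields $S|_{\bar S \setminus B}$, and combining the three cases gives $\mathrm{Bs}\,|\zeta_M| = S \cup \PP(N_{B/M})$.

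The subtlety I anticipate is reconciling the two a priori different distinguished hyperplanes at $x \in \bar S \setminus B$: the tangent $T_{\bar S, x}$ coming from the $\mathrm{PSL}_2$-orbit, and $(\sO_l(2) \oplus \sO_l(1))|_x$ coming from the negative splitting of $T_M|_l$. The containment shown above, together with the matching dimensions, forces both to equal $K_x$, so no further geometric input beyond the orbit structure and the splitting type is required.
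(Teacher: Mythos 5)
Your argument is correct and follows essentially the same route as the paper: an orbit-by-orbit analysis of the image $K_x$ of the evaluation map $H^0(M,T_M)\to T_{M,x}$, using that $\mathrm{Aut}(M)\simeq \mathrm{PSL}_2\mathbb C$ acts with three orbits of dimensions $3,2,1$. The only (harmless) variation is over $\bar S\setminus B$: the paper first notes $S\subset \mathrm{Bs}|\zeta_M|$ because $\zeta_M|_S$ is not pseudoeffective and then deduces from $\dim K_x=2$ that the base locus meets $\pi^{-1}(x)$ in a single point, whereas you identify that point directly as $\tilde l(x)\in S$ via the splitting \eqref{negative-splitting} and $H^0(\sO_l(-1))=0$; both are fine.
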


\begin{proof} From the description of the orbits of $\mbox{Aut}(M)$ at the start of this subsection it is clear that  $Bs\vert \zeta_M \vert $ is contained in $\pi^{-1}(\overline S)$.
Clearly $S \subset Bs\vert \zeta_M \vert $, since the restriction of $\zeta_M$
to $S$ is not pseudoeffective.
Let $x \in \overline S \setminus B$. Then 
$$ {\rm Ker} (H^0(M,T_M) \to T_{M,x}) $$
has dimension one, and so has
$$ 
{\rm Ker}\left(
H^0(\PP(T_M), \zeta_M) \to H^0(\pi^{-1}(x), \zeta_M)
\right).
$$ 
Thus $(Bs\vert \zeta_M \vert)\cap \pi^{-1}(x) $ is a single point, i.e., an irreducible
component $S' \neq S$ of $Bs\vert \zeta_M\vert$ is mapped into $B$. 

Finally, since the image of the restriction map
$$ 
H^0(M,T_M) \to H^0(B, {T_M} \otimes \sO_B) 
$$
is equal to $H^0(B,T_B)$, it follows that $\mathbb P(N_{B/M}) \subset Bs\vert \zeta_M \vert$ and that $\pi^{-1}(B) \not \subset Bs\vert \zeta_M \vert $.
\end{proof} 

Recall that $\PP(T_M)=Y$ has a contact structure \cite{Le95}: 
there exists a corank one subbundle $\sF \subset T_Y$ that fits into an exact sequence
\begin{equation} \label{contact1}
0 \rightarrow \sF \rightarrow T_Y \rightarrow \zeta_M \rightarrow 0.
\end{equation}
Moreover the Lie-bracket induces a morphism
$$
\sF \otimes \sF \rightarrow \zeta_M
$$
that is every non-degenerate, so we have an isomorphism 
\begin{equation} \label{contact2}
\sF \simeq \sF^* \otimes \zeta_M.
\end{equation}
 We also have an exact sequence
\begin{equation} \label{contactA}
0 \rightarrow T_{Y/M} \rightarrow \sF \rightarrow \Omega_{Y/M} \otimes \zeta_M \rightarrow 0.
\end{equation} 
We will now determine the normal bundle of a curve $\tilde l \subset Y=\PP(T_M)$:

\begin{lemma} \label{lemma-splitting-type-Y}
We have
$$
T_Y \otimes \sO_{\tilde l} \simeq \sO_{\tilde l}(2) \oplus \sO_{\tilde l} \oplus \sO_{\tilde l}(-1) \oplus \sO_{\tilde l}(-2)^{\oplus 2}
$$
\end{lemma}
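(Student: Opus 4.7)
My plan is to compute $T_Y \otimes \sO_{\tilde l}$ by combining the relative tangent sequence for $\pi \colon Y \to M$ with the contact structure on $Y$ from \eqref{contact1}--\eqref{contactA}.

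First, I restrict the relative Euler sequence $0 \to \sO_Y \to \pi^*\Omega_M \otimes \zeta_M \to T_{Y/M} \to 0$ to $\tilde l$. Using $T_M \otimes \sO_l \simeq \sO(2) \oplus \sO(1) \oplus \sO(-1)$ from \eqref{negative-splitting} and $\zeta_M|_{\tilde l} \simeq \sO_{\tilde l}(-1)$ (the quotient defining $\tilde l$ in Lemma \ref{lemma-zetaM}), the middle term becomes $\sO(-3) \oplus \sO(-2) \oplus \sO$ with the Euler injection landing in the trivial summand. Thus $T_{Y/M}|_{\tilde l} \simeq \sO(-3) \oplus \sO(-2)$, and the tangent sequence yields
$$0 \to \sO(-3) \oplus \sO(-2) \to T_Y|_{\tilde l} \to \sO(2) \oplus \sO(1) \oplus \sO(-1) \to 0,$$
but a non-zero $\Ext^1$-group on $\PP^1$ shows this alone does not determine $T_Y|_{\tilde l}$.

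To pin it down I use the contact data. Restricting \eqref{contactA} yields
$$0 \to \sO(-3) \oplus \sO(-2) \to \sF|_{\tilde l} \to \sO(2) \oplus \sO(1) \to 0,$$
where the quotient is $\Omega_{Y/M} \otimes \zeta_M|_{\tilde l}$. The self-duality \eqref{contact2} restricts to $\sF|_{\tilde l} \simeq (\sF|_{\tilde l})^* \otimes \sO(-1)$, forcing the splitting summands $a_1 \geq \ldots \geq a_4$ of $\sF|_{\tilde l}$ to satisfy $a_1 + a_4 = a_2 + a_3 = -1$. Moreover $T_{\tilde l} \simeq \sO(2)$ is a subbundle of $\sF|_{\tilde l}$: the tangent direction of $\tilde l$ projects under $\pi_*$ onto $T_l = \sO(2) \subset T_M|_l$, which lies in the kernel of the quotient $T_M|_l \to \sO_l(-1)$ defining $\tilde l$, i.e.\ satisfies the contact condition. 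Together with the sub/quotient structure and the pairing, these constraints reduce $\sF|_{\tilde l}$ to one of two candidates: the trivial extension $\sO(2) \oplus \sO(1) \oplus \sO(-2) \oplus \sO(-3)$ or the balanced variant $\sO(2) \oplus \sO \oplus \sO(-1) \oplus \sO(-3)$.

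Finally, restricting \eqref{contact1} gives $0 \to \sF|_{\tilde l} \to T_Y|_{\tilde l} \to \sO(-1) \to 0$, and $\Ext^1(\sO(-1), \sF|_{\tilde l}) \simeq H^1(\sF|_{\tilde l}(1)) \simeq \C$, coming entirely from the $\sO(-3)$-summand via the unique non-split extension $0 \to \sO(-3) \to \sO(-2)^{\oplus 2} \to \sO(-1) \to 0$ on $\PP^1$. Identifying $\sF|_{\tilde l}$ as the second candidate and showing the resulting extension class is non-zero yields the claimed splitting $T_Y|_{\tilde l} \simeq \sO(2) \oplus \sO \oplus \sO(-1) \oplus \sO(-2)^{\oplus 2}$. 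The main obstacle is precisely this last step: distinguishing the two candidates for $\sF|_{\tilde l}$ and verifying the non-triviality of the extension in \eqref{contact1}. The cleanest route is to pull the whole contact sequence back along $\tilde e_C \colon \mathcal U_C \to S \subset Y$ and compute on $\mathcal U_C = C \times l$, where Lemma \ref{lemma-zetaM} and \eqref{extension-K} describe the relevant bundles explicitly in terms of bidegrees, so that the extension classes become concrete $H^1$-elements whose vanishing or non-vanishing one can check by hand.
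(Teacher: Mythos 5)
Your reduction is sound and in fact follows the same route as the paper: the computation $T_{Y/M}\otimes\sO_{\tilde l}\simeq\sO_{\tilde l}(-3)\oplus\sO_{\tilde l}(-2)$, the restriction of the contact sequences \eqref{contact1} and \eqref{contactA}, the self-duality constraint $a_1+a_4=a_2+a_3=-1$ from \eqref{contact2}, the lifting of $T_{\tilde l}$ into $\sF\otimes\sO_{\tilde l}$ forcing $a_1=2$ and $a_4=-3$, and the resulting two candidates for $\sF\otimes\sO_{\tilde l}$ and then for $T_Y\otimes\sO_{\tilde l}$ all match the text. But your argument stops exactly at the two steps that carry all the content, and you say so yourself: you neither rule out $\sF\otimes\sO_{\tilde l}\simeq\sO_{\tilde l}(2)\oplus\sO_{\tilde l}(1)\oplus\sO_{\tilde l}(-2)\oplus\sO_{\tilde l}(-3)$, nor show that the restricted contact extension $0\to\sF\otimes\sO_{\tilde l}\to T_Y\otimes\sO_{\tilde l}\to\sO_{\tilde l}(-1)\to 0$ is non-split. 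The plan to ``pull back along $\tilde e_C$ and check by hand'' is not a proof, and its feasibility is doubtful: the two undetermined classes live in $H^1$ of line bundles on $\tilde l$ itself, and restricting the pulled-back sequences to a fibre $\{c\}\times l$ of $\mathcal U_C\to C$ returns you to exactly the same two computations, while working globally on $C\times\tilde l$ only enlarges the relevant $\Ext^1$-groups. Moreover the bidegree data you invoke (Lemma \ref{lemma-zetaM}, \eqref{extension-K}) describe $e_C^*T_M$, not $\sF$ or $T_Y$, and the global description of $N_{S/Y}$ in Proposition \ref{prop:N2} is itself deduced from the present lemma, so it cannot be used here.

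The paper closes the two gaps with specific extra input. For the first: $S$ is a \emph{smooth} surface in $Y$ (Lemma \ref{lemma-exceptional-locus-varphiM}), so $T_S\otimes\sO_{\tilde l}\simeq\sO_{\tilde l}(2)\oplus\sO_{\tilde l}$ injects as a morphism of vector bundles into $T_Y\otimes\sO_{\tilde l}$; since $\Hom(\sO_{\tilde l},\sO_{\tilde l}(-1))=0$ the injection lifts to $\sF\otimes\sO_{\tilde l}$, and no bundle injection $\sO_{\tilde l}(2)\oplus\sO_{\tilde l}\hookrightarrow\sO_{\tilde l}(2)\oplus\sO_{\tilde l}(1)\oplus\sO_{\tilde l}(-2)\oplus\sO_{\tilde l}(-3)$ exists (it would factor through the first two summands, where the induced map on determinants is a section of $\sO_{\tilde l}(1)$ and hence vanishes somewhere). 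For the second: by \cite[Prop.2.5]{Le95} the extension class of the contact sequence is the image of $\frac{2\pi i}{2}[-K_Y]|_{\tilde l}$ under a map $\mu$ whose kernel is contained in $H^1(\tilde l,\zeta_M^*\otimes\sO_{\tilde l})=H^1(\PP^1,\sO(1))=0$, and $-K_Y\cdot\tilde l=3\,\zeta_M\cdot\tilde l=-3\neq 0$, so the class is non-zero. Without these (or equivalent) inputs your argument only shows that $T_Y\otimes\sO_{\tilde l}$ belongs to a short list of bundles, not the stated isomorphism.
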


\begin{proof} 
The curve $\tilde l$ is the section of $\PP(T_M \otimes \sO_l) \rightarrow l$ corresponding to
the negative quotient  $\sO_{l}(2) \oplus \sO_{l}(1) \oplus \sO_{l}(-1) \rightarrow \sO_{l}(-1)$.
Thus we have
$$
T_{Y/M} \otimes \sO_{\tilde l} \simeq T_{\PP(T_M \otimes \sO_l)/l} \otimes \sO_{\tilde l}  \simeq N_{\tilde l/\PP(T_M \otimes \sO_l)} \simeq \sO_{\tilde l}(-3) \oplus \sO_{\tilde l}(-2).
$$
Since $\zeta_M \cdot \tilde l=-1$ this implies 
that 
$$
\Omega_{Y/M} \otimes \zeta_M \otimes \sO_{\tilde l} \simeq \sO_{\tilde l}(2) \oplus \sO_{\tilde l}(1).
$$
Sequence (\ref{contactA}) therefore reads
$$
0 \rightarrow \sO_{\tilde l}(-3) \oplus \sO_{\tilde l}(-2) \rightarrow \sF \otimes \sO_{\tilde l}
\rightarrow \sO_{\tilde l}(2) \oplus \sO_{\tilde l}(1) \rightarrow 0
$$
Thus if we write
$$
\sF \otimes \sO_{\tilde l} \simeq \oplus_{i=1}^4 \sO_{\tilde l}(a_i)
$$
with $a_1 \geq a_2 \geq a_3 \geq a_4$, then $a_1 \leq 2$. 

Consider now the natural inclusion
$T_{\tilde l} \hookrightarrow T_Y \otimes \sO_{\tilde l}$: since $\zeta_M \cdot \tilde l=-1$ we deduce
from \eqref{contact1} that $T_{\tilde l} \hookrightarrow \sF \otimes \sO_{\tilde l}$. Hence we have $a_1=2$. Since by \eqref{contact2} we have $\sF \otimes \sO_{\tilde l}  \simeq \sF^* \otimes \sO_{\tilde l}(-1)$ this also implies that $a_4=-3$. 
Since $c_1(\sF) \cdot \tilde l=-2$, we are left with two possibilities:
$$
\sF \otimes \sO_{\tilde l} \simeq \sO_{\tilde l}(2) \oplus \sO_{\tilde l} \oplus \sO_{\tilde l}(-1) \oplus \sO_{\tilde l}(-3)
$$ 
or
$$
\sF \otimes \sO_{\tilde l} \simeq \sO_{\tilde l}(2) \oplus \sO_{\tilde l}(1) \oplus \sO_{\tilde l}(-2) \oplus \sO_{\tilde l}(-3).
$$ 
Let us exclude the latter case: since $S \subset Y$ is a smooth surface, we have
an injective morphism of vector bundles
$$
\sO_{\tilde l}(2) \oplus \sO_{\tilde l} \simeq T_S \otimes \sO_{\tilde l} \hookrightarrow T_Y \otimes \sO_{\tilde l}. 
$$
Using again that $\zeta_M \cdot \tilde l=-1$ and \eqref{contact1}, the inclusion lifts
to an injective morphism of vector bundles
$$
\sO_{\tilde l}(2) \oplus \sO_{\tilde l} \hookrightarrow \sF \otimes \sO_{\tilde l}. 
$$
Yet if  $\sF \otimes \sO_{\tilde l} \simeq \sO_{\tilde l}(2) \oplus \sO_{\tilde l}(1) \oplus \sO_{\tilde l}(-2) \oplus \sO_{\tilde l}(-3)$,
such an injective morphism does not exist.

Using  again the extension \eqref{contact1} we deduce
$$
T_Y \otimes \sO_{\tilde l} \simeq \sO_{\tilde l}(2) \oplus \sO_{\tilde l} \oplus \sO_{\tilde l}(-1)^{\oplus 2} \oplus \sO_{\tilde l}(-3)
$$
or
$$
T_Y \otimes \sO_{\tilde l} \simeq \sO_{\tilde l}(2) \oplus \sO_{\tilde l} \oplus \sO_{\tilde l}(-1) \oplus \sO_{\tilde l}(-2)^{\oplus 2}.
$$
Thus, it remains to exclude the first case which is exactly the case when the restricted contact sequence
$$ 
0 \to \sF \otimes \sO_{\tilde l}  \to  T_Y \otimes \sO_{\tilde l} \to \zeta_M
\otimes \sO_{\tilde l} \to 0 
$$
splits. By \cite[Prop.2.5]{Le95}, the extension class $a$ of this sequence is the image of $$\frac {2\pi i }{2} [-K_Y]|_{\tilde l} \in H^1(\tilde l ,\Omega_Y \otimes \sO_{\tilde l})$$ 
under a canonical map 
$$
\mu:  H^1(\tilde l,\Omega_Y \otimes \sO_{\tilde l}) \to H^1(\tilde l, \sF \otimes {\zeta_M^*} \otimes \sO_{\tilde l}).
$$
Moreover, the kernel of $\mu$ is contained in $H^1(\tilde l, {\zeta_M^*} \otimes \sO_{\tilde l}) = 0$, so $\mu $ is injective. 
Further, $-K_Y \cdot \tilde l < 0$, hence $a \ne 0$. 
\end{proof} 

Note that the proof above shows that the inclusion $T_S \hookrightarrow T_Y \otimes \sO_S$ factors through an inclusion
$$
T_S \hookrightarrow \sF \otimes \sO_S.
$$ inducing an exact sequence
\begin{equation} \label{contactB}
0 \to T_S \rightarrow \sF \otimes \sO_S \rightarrow \Omega_S \otimes \zeta_M \to 0.
\end{equation} 

Sequences (\ref{contact1}) and (\ref{contactB}) now yield an exact sequence 
\begin{equation} \label{contactC}
0 \rightarrow \Omega_S  \otimes \zeta_M \rightarrow N_{S/Y} \rightarrow \zeta_M \otimes \sO_S \to 0.
\end{equation} 

\begin{proposition} \label{prop:N2}
We have an exact sequence 
$$ 0 \rightarrow \sO_{C \times \tilde l}(-7,2)^{\oplus 2} \rightarrow N^*_{S/Y} \rightarrow \sO_{C \times \tilde l}(-5,1) \rightarrow 0. $$
\end{proposition}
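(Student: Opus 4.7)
The plan is to refine the contact-induced extension \eqref{contactC}
$$
0 \to \Omega_S \otimes \zeta_M \to N_{S/Y} \to \zeta_M|_S \to 0
$$
by exploiting the product structure $S \simeq C \times \tilde l$. The canonical splitting $\Omega_S \simeq p_C^* \Omega_C \oplus p_{\tilde l}^* \Omega_{\tilde l}$ combined with $\zeta_M|_S \simeq \sO_{C \times \tilde l}(7, -1)$ from Lemma \ref{lemma-zetaM} gives
$$
\Omega_S \otimes \zeta_M \simeq \sO_{C \times \tilde l}(5, -1) \oplus \sO_{C \times \tilde l}(7, -3),
$$
so the first summand $p_C^* \Omega_C \otimes \zeta_M|_S \simeq \sO_{C \times \tilde l}(5, -1)$ sits as a rank-one subbundle of $N_{S/Y}$. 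The goal is to show that the corresponding rank-two quotient is isomorphic to $\sO_{C \times \tilde l}(7, -2)^{\oplus 2}$; dualising will then yield the claim.

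By the snake lemma this quotient fits into an extension
$$
0 \to \sO_{C \times \tilde l}(7, -3) \to N_{S/Y}/\sO_{C \times \tilde l}(5, -1) \to \sO_{C \times \tilde l}(7, -1) \to 0.
$$
The K\"unneth formula gives $\Ext^1(\sO(7, -1), \sO(7, -3)) \simeq H^1(S, \sO(0, -2)) \simeq \C$, and the Euler sequence on $\tilde l \simeq \PP^1$, twisted by $\sO_{\tilde l}(-3)$, pulled back by $p_{\tilde l}$ and further twisted by $p_C^* \sO_C(7)$, exhibits $\sO_{C \times \tilde l}(7, -2)^{\oplus 2}$ as the unique non-trivial extension of $\sO_{C \times \tilde l}(7, -1)$ by $\sO_{C \times \tilde l}(7, -3)$. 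Thus it suffices to show that the displayed extension is non-trivial.

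For this, observe that by K\"unneth the restriction map $H^1(S, \sO(0, -2)) \to H^1(\tilde l, \sO_{\tilde l}(-2))$ is an isomorphism, so it is enough to verify non-triviality on one fiber $\tilde l$ of $p_C$. Using Lemma \ref{lemma-splitting-type-Y} together with $T_{\tilde l} \simeq \sO_{\tilde l}(2)$ one obtains $N_{\tilde l/Y} \simeq \sO_{\tilde l} \oplus \sO_{\tilde l}(-1) \oplus \sO_{\tilde l}(-2)^{\oplus 2}$. Substituting this together with $N_{\tilde l/S} \simeq \sO_{\tilde l}$ (since $\tilde l$ is a fiber of $p_C$) in the exact sequence
$$
0 \to N_{\tilde l/S} \to N_{\tilde l/Y} \to N_{S/Y}|_{\tilde l} \to 0,
$$
and noting that $N_{\tilde l/S} \simeq \sO_{\tilde l}$ must inject into the unique trivial summand of $N_{\tilde l/Y}$, forces $N_{S/Y}|_{\tilde l} \simeq \sO_{\tilde l}(-1) \oplus \sO_{\tilde l}(-2)^{\oplus 2}$. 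Quotienting by the distinguished subsheaf $\sO_{C \times \tilde l}(5, -1)|_{\tilde l} = \sO_{\tilde l}(-1)$ produces $\sO_{\tilde l}(-2)^{\oplus 2}$, which is exactly the non-trivial extension of $\sO_{\tilde l}(-1)$ by $\sO_{\tilde l}(-3)$, completing the argument. The main obstacle is this fibrewise splitting-type identification; the rest reduces to routine homological bookkeeping and dualisation.
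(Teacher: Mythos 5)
Your proof is correct and follows essentially the same route as the paper: both start from the contact sequence \eqref{contactC}, split off the summand of $\Omega_S\otimes\zeta_M$ coming from $p_C^*\Omega_C$, and use Lemma \ref{lemma-splitting-type-Y} to pin down the restriction of the remaining rank-two piece to a fibre $\tilde l$. The only (cosmetic) difference is the final identification: the paper recognises that rank-two bundle as $\sO_{C\times\tilde l}(-7,2)^{\oplus 2}$ via the direct image under $p_C$ and the relative evaluation map, whereas you use $\dim\Ext^1(\sO_{C\times\tilde l}(7,-1),\sO_{C\times\tilde l}(7,-3))=1$ together with the uniqueness of the non-split extension.
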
 

\begin{proof} By Lemma \ref{lemma-zetaM} we have
 $$
 \zeta_M \otimes \sO_S \simeq \sO_{C \times \tilde l}(7,-1).
 $$
 Since $T_S \simeq \sO_{C \times \tilde l}(2,0) \oplus \sO_{C \times \tilde l}(0,2)$, we deduce from \eqref{contactC} an exact sequence
$$
0 \rightarrow \sO_{C \times \tilde l}(-7,1) \rightarrow N_{S/Y}^* \rightarrow \sO_{C \times \tilde l}(-5,1) \oplus \sO_{C \times \tilde l}(-7,3) \rightarrow 0.
$$
By Lemma \ref{lemma-splitting-type-Y} we have 
$N^*_{S/Y} \otimes \sO_{\tilde l} \simeq  \sO_{\tilde l}(1) \oplus \sO_{\tilde l}(2)^{\oplus 2}$.
Thus if $G$ denotes the kernel of $N^*_{S/Y} \rightarrow \sO_{C \times \tilde l}(-5,1)$,  
it sits in a non-split
extension
$$
0 \rightarrow \sO_{C \times \tilde l}(-7,1) \rightarrow G \rightarrow  \sO_{C \times \tilde l}(-7,3) \rightarrow 0.
$$
Denoting by $p_C$ the projection  onto $C$,  the bundle
$(p_C)_* (G \otimes \sO_{C \times \tilde l}(0,-2))$ has rank two, and the evaluation morphism
$$
p_C^* (p_C)_* (G \otimes \sO_{C \times \tilde l}(0,-2)) \rightarrow G \otimes \sO_{C \times \tilde l}(0,-2)
$$
is surjective. It is straightforward to see that 
$$
(p_C)_* (G \otimes \sO_{C \times \tilde l}(0,-2)) \simeq (p_C)_* \sO_{C \times \tilde l}(-7,1) \simeq \sO_C(-7)^{\oplus 2}.
$$
Thus we have $G \simeq \sO_{C \times \tilde l}(-7,2)^{\oplus 2}$ which proves our claim. 
\end{proof}

\subsection{Geometry of $\PP(V)$ and construction of an antiflip} \label{subsectionPV}

Let us recall that the vector bundle $V$ is given by an extension
$$
0 \rightarrow \sO_M \rightarrow V \rightarrow T_M \rightarrow 0
$$
associated to the K\"ahler class $c_1(\sO_M(1)) \in H^1(M, \Omega_M)$.
Let $\tilde \pi: X=\PP(V) \to M$ denote the projectivisation; thus $Y=\PP(T_M) \subset X$, and  we have an exact
sequence
\begin{equation} \label{tangentXY}
0 \rightarrow T_Y \rightarrow T_X \otimes \sO_Y \rightarrow 
N_{Y/X} \simeq \sO_{\PP(V)}(1) \otimes \sO_Y \simeq \zeta_M \rightarrow 0.
\end{equation}
Let 
$$ \varphi_V: \PP(V) \to \PP_M$$
be the morphism associated with the base point free linear system $\vert \zeta_V \otimes \tilde \pi^* \sO_M(H) \vert $. 

\begin{proposition}  \label{proposition-properties-X}
We have the following properties:
\begin{enumerate}
\item ${\varphi_V }_{\vert Y} = \varphi_M$;
\item the exceptional locus of $\varphi_V$ is $S$, and $\varphi_V$ contracts exactly the curves $\tilde l$;
\item $T_X \otimes \sO_{\tilde l} = \sO_{\tilde l}(2) \oplus \sO \oplus \sO_{\tilde l}(-1)^{\oplus 2 } \oplus \sO_{\tilde l}(-2)^{\oplus 2}$.
\item $N_{S/X} \otimes \sO_{\tilde l} \simeq \sO_{\tilde l}(-1)^{\oplus 2} \oplus \sO_{\tilde l}(-2)^{\oplus 2}$. 
\end{enumerate}
\end{proposition}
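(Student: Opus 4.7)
The plan is to treat the four parts in the order (1), (2), (4), (3), turning everything into short-exact-sequence manipulations whose only nontrivial content is an $\Ext^1$-vanishing on $\PP^1$.

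For (1), the key observation is that $\zeta_V|_Y = \zeta_M$ (because the surjection $V \twoheadrightarrow T_M$ identifies $Y = \PP(T_M)$ with the locus in $\PP(V)$ where the universal quotient factors through $T_M$) and that $Y \in |\zeta_V|$. Twisting the ideal sheaf sequence of $Y$ in $X$ by $\zeta_V \otimes \tilde\pi^*\sO_M(1)$ and using $H^1(X, \tilde\pi^*\sO_M(1)) = H^1(M, \sO_M(1)) = 0$ (Leray plus Kodaira) gives surjectivity of the restriction map $H^0(X, \zeta_V \otimes \tilde\pi^*\sO_M(1)) \to H^0(Y, \zeta_M \otimes \pi^*\sO_M(1))$. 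The same vanishing, applied to the defining extension of $V$ twisted by $\sO_M(1)$, shows that $V \otimes \sO_M(1)$ is globally generated, hence so is $\zeta_V \otimes \tilde\pi^*\sO_M(1)$. Thus $\varphi_V$ is a morphism and $\varphi_V|_Y = \varphi_M$. Part (2) then follows at once: any curve $\gamma \subset X$ contracted by $\varphi_V$ satisfies $\zeta_V\cdot\gamma = \tilde\pi^*\sO_M(1)\cdot\gamma = 0$; if $\gamma\not\subset Y$, then $\zeta_V\cdot\gamma = Y \cdot \gamma \geq 0$ and the vanishing $\tilde\pi^*\sO_M(1)\cdot\gamma = 0$ forces $\gamma$ into a fiber of $\tilde\pi$, on which $\zeta_V$ restricts to $\sO_{\PP^3}(1)$ --- contradiction. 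Therefore $\gamma \subset Y$, and (1) together with Lemmas \ref{liu} and \ref{lemma-exceptional-locus-varphiM} identifies $\gamma$ as one of the curves $\tilde l$ and gives $\mbox{Exc}(\varphi_V) = S$.

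For (4), I would restrict the normal bundle sequence $0 \to N_{S/Y} \to N_{S/X} \to N_{Y/X}|_S \to 0$ to a ruling $\tilde l$ of $S \simeq C \times \tilde l$. Since $N_{Y/X} = \zeta_V|_Y = \zeta_M$, Lemma \ref{lemma-zetaM} gives $N_{Y/X}|_{\tilde l} = \sO_{\tilde l}(-1)$. Dualizing Proposition \ref{prop:N2} and restricting to $\tilde l$ yields an extension $0 \to \sO_{\tilde l}(2)^{\oplus 2} \to N^*_{S/Y}|_{\tilde l} \to \sO_{\tilde l}(1) \to 0$, which splits since $\Ext^1(\sO(1), \sO(2)) = 0$, so $N_{S/Y}|_{\tilde l} \simeq \sO_{\tilde l}(-2)^{\oplus 2} \oplus \sO_{\tilde l}(-1)$. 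A second extension (again splitting for degree reasons) then gives $N_{S/X}|_{\tilde l} \simeq \sO_{\tilde l}(-2)^{\oplus 2} \oplus \sO_{\tilde l}(-1)^{\oplus 2}$. Statement (3) now follows from $0 \to T_S|_{\tilde l} \to T_X|_{\tilde l} \to N_{S/X}|_{\tilde l} \to 0$ together with $T_S|_{\tilde l} = \sO_{\tilde l}(2) \oplus \sO_{\tilde l}$ (as $\tilde l$ is a fiber of $p_C$): the relevant $\Ext^1$-group vanishes since after tensoring all summands sit in nonnegative degree on $\PP^1$.

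I do not expect serious obstacles: once the identifications $\zeta_V|_Y = \zeta_M$ and $\zeta_M|_S = \sO_{C \times \tilde l}(7,-1)$ are in place, every extension problem on $\PP^1$ vanishes for immediate degree reasons. The one subtle point is to route (3) through $S$ and $N_{S/X}$ rather than through the direct sequence $0 \to T_Y|_{\tilde l} \to T_X|_{\tilde l} \to \zeta_M|_{\tilde l} \to 0$, whose obstruction group $\Ext^1(\sO_{\tilde l}(-1), T_Y|_{\tilde l})$ is two-dimensional thanks to the two $\sO_{\tilde l}(-2)$ summands in Lemma \ref{lemma-splitting-type-Y}, and would require additional input to exclude the alternative splitting types.
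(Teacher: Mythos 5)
Your proposal is correct, and for parts (1) and (2) it follows the paper's line exactly (Kodaira vanishing to get surjectivity of the restriction of sections, then reduction to Lemma \ref{liu} and Lemma \ref{lemma-exceptional-locus-varphiM}); your treatment of (2) is in fact more explicit than the paper's, which simply declares it immediate, and your observation that $Y\cdot\gamma\geq 0$ and $\tilde\pi^*\sO_M(1)\cdot\gamma\geq 0$ must both vanish is exactly the missing bookkeeping. For (3) and (4) you take a genuinely different route: the paper reads both statements directly off Lemma \ref{lemma-splitting-type-Y} and the restriction of the sequence \eqref{tangentXY} to $\tilde l$, whereas you detour through $S$, first computing $N_{S/X}\otimes\sO_{\tilde l}$ from Proposition \ref{prop:N2} and the normal bundle sequence of $S\subset Y\subset X$, and then recovering $T_X\otimes\sO_{\tilde l}$ from $0\to T_S\otimes\sO_{\tilde l}\to T_X\otimes\sO_{\tilde l}\to N_{S/X}\otimes\sO_{\tilde l}\to 0$. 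This works, but your stated reason for avoiding the direct route is a miscalculation: the obstruction group for the restricted sequence \eqref{tangentXY} is
$$
\Ext^1\bigl(\sO_{\tilde l}(-1),\, T_Y\otimes\sO_{\tilde l}\bigr)
\;=\;\bigoplus_i H^1\bigl(\PP^1,\sO_{\tilde l}(a_i+1)\bigr)
$$
with $a_i\in\{2,0,-1,-2,-2\}$, and since $a_i+1\geq -1$ for every summand this group is zero, not two-dimensional (you appear to have computed $H^1(\sO_{\tilde l}(-3))$ instead of $H^1(\sO_{\tilde l}(-1))$ for the $\sO_{\tilde l}(-2)$ summands). So the ``subtlety'' you flag does not exist and the paper's shorter argument goes through unchanged; your detour buys nothing beyond an independent cross-check, though it does no harm since every extension you actually use does split for the degree reasons you give.
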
 

\begin{proof}
The exact sequence
$$
0 \rightarrow \sO_M(1) \rightarrow V \otimes \sO_M(1)  \rightarrow T_M \otimes \sO_M(1) \rightarrow 0
$$
and Kodaira vanishing shows that $H^0(M, V \otimes \sO_M(1))
\rightarrow H^0(M, T_M \otimes \sO_M(1))$ is surjective, and the first statement follows. The second statement is then immediate from Lemma \ref{lemma-exceptional-locus-varphiM}.

The third and fourth statement follow from Lemma \ref{lemma-splitting-type-Y}
and the exact sequence \eqref{tangentXY}.
\end{proof}

\begin{proposition} \label{prop:N3}
We have an exact sequence 
$$
0 \rightarrow \sO_{C \times \tilde l}(-7,2)^{\oplus 2} \rightarrow N^*_{S/X} \rightarrow T \rightarrow 0
$$
where $T$ is a rank two vector bundle given by an extension
$$
0 \rightarrow \sO_{C \times \tilde l}(-7,1) \rightarrow T \rightarrow \sO_{C \times \tilde l}(-5,1) \rightarrow 0
$$
\end{proposition}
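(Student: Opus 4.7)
My plan is to combine the conormal sequence for the flag $S \subset Y \subset X$ with the description of $N^*_{S/Y}$ provided by Proposition \ref{prop:N2}.

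First, since $S$ and $Y$ are smooth subvarieties of the smooth variety $X$ and $Y\subset X$ is a divisor, the conormal sequence gives a short exact sequence of locally free sheaves on $S$:
$$
0 \to N^*_{Y/X} \otimes \sO_S \to N^*_{S/X} \to N^*_{S/Y} \to 0.
$$
By \eqref{tangentXY} one has $N_{Y/X} \simeq \zeta_M$, and Lemma \ref{lemma-zetaM} gives $\zeta_M \otimes \sO_S \simeq \sO_{C \times \tilde l}(7,-1)$. Hence $N^*_{Y/X} \otimes \sO_S \simeq \sO_{C \times \tilde l}(-7,1)$. Combined with Proposition \ref{prop:N2}, this puts a three-step filtration on $N^*_{S/X}$ with graded pieces $\sO_{C \times \tilde l}(-7,1)$, $\sO_{C \times \tilde l}(-7,2)^{\oplus 2}$, and $\sO_{C \times \tilde l}(-5,1)$.

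Next I would rearrange the filtration so that $\sO_{C \times \tilde l}(-7,2)^{\oplus 2}$ appears as the subsheaf. Let $P \subset N^*_{S/X}$ be the preimage under $N^*_{S/X} \twoheadrightarrow N^*_{S/Y}$ of the subsheaf $\sO_{C \times \tilde l}(-7,2)^{\oplus 2} \subset N^*_{S/Y}$ provided by Proposition \ref{prop:N2}. Then $P$ fits into the two short exact sequences
$$
0 \to \sO_{C \times \tilde l}(-7,1) \to P \to \sO_{C \times \tilde l}(-7,2)^{\oplus 2} \to 0
$$
and
$$
0 \to P \to N^*_{S/X} \to \sO_{C \times \tilde l}(-5,1) \to 0.
$$
The first one splits: its extension class lies in $\Ext^1(\sO_{C \times \tilde l}(-7,2), \sO_{C \times \tilde l}(-7,1))^{\oplus 2} \simeq H^1(\PP^1 \times \PP^1, \sO(0,-1))^{\oplus 2}$, which vanishes by K\"unneth. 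Moreover, $\Hom(\sO_{C \times \tilde l}(-7,2), \sO_{C \times \tilde l}(-7,1)) = H^0(\sO(0,-1)) = 0$, so the summand $\sO_{C \times \tilde l}(-7,2)^{\oplus 2} \subset P$ is canonically determined, and thereby gives a canonical subsheaf of $N^*_{S/X}$.

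Finally, I would set $T := N^*_{S/X}/\sO_{C \times \tilde l}(-7,2)^{\oplus 2}$. The filtration $\sO_{C \times \tilde l}(-7,2)^{\oplus 2} \subset P \subset N^*_{S/X}$ has successive quotients $\sO_{C \times \tilde l}(-7,1)$ and $\sO_{C \times \tilde l}(-5,1)$, which immediately yields the claimed extension of $T$. I do not foresee any significant obstacle: the argument is essentially a diagram chase, whose only non-trivial input is the elementary K\"unneth vanishing $H^1(\PP^1\times\PP^1,\sO(0,-1))=0$.
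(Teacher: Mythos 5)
Your proof is correct, and it starts from the same two exact sequences as the paper -- the conormal sequence of $S \subset Y \subset X$ together with Proposition \ref{prop:N2} -- but the key step of rearranging the resulting three-step filtration is carried out differently. The paper establishes the subbundle $\sO_{C \times \tilde l}(-7,2)^{\oplus 2} \hookrightarrow N^*_{S/X}$ by the same relative-evaluation technique as in the proof of Proposition \ref{prop:N2}: it uses Proposition \ref{proposition-properties-X}, d) to see that $(p_C)_* (N_{S/X}^* \otimes \sO_{C \times \tilde l}(0,-2))$ has rank two and then takes the image of the evaluation map. You instead pass to the preimage $P$ of $\sO_{C \times \tilde l}(-7,2)^{\oplus 2} \subset N^*_{S/Y}$ and split off the subbundle via the vanishing of
$$
\Ext^1\bigl(\sO_{C \times \tilde l}(-7,2), \sO_{C \times \tilde l}(-7,1)\bigr) \simeq H^1\bigl(C \times \tilde l, \sO_{C \times \tilde l}(0,-1)\bigr) = 0,
$$
and your additional observation that $\Hom(\sO_{C \times \tilde l}(-7,2), \sO_{C \times \tilde l}(-7,1)) = 0$ makes the splitting, hence the subsheaf of $N^*_{S/X}$ and the quotient $T$, canonical; in particular your $T$ agrees with the paper's, and the factorisation $N^*_{S/X} \rightarrow T \rightarrow \sO_{C \times \tilde l}(-5,1)$ used later in Proposition \ref{proposition-strict-transform} is immediate from your construction. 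Your route is slightly more elementary in that it avoids the splitting type of $N_{S/X}$ along $\tilde l$ and needs only cohomology vanishing on $\PP^1 \times \PP^1$; the paper's route has the advantage of exhibiting the subbundle directly in the form $p_C^*(\cdot) \otimes \sO_{C \times \tilde l}(0,2)$, the shape that is exploited again in Lemma \ref{lemma-normal-S1} and Lemma \ref{lemma-E2}. Both arguments are complete.
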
 

\begin{proof}
By Proposition \ref{prop:N2} there is an exact sequence
$$ 
0 \rightarrow \sO_{C \times \tilde l}(-7,2)^{\oplus 2} \rightarrow N^*_{S/Y} \rightarrow \sO_{C \times \tilde l}(-5,1) \rightarrow 0. 
$$
Consider the exact sequence
$$
0 \rightarrow N_{Y/X}^* \otimes \sO_S \simeq \zeta_M^* \otimes \sO_S \simeq \sO_{C \times \tilde l}(-7,1) \rightarrow
N^*_{S/X} \rightarrow N^*_{S/Y} \rightarrow 0.
$$
By Proposition \ref{proposition-properties-X},d) the direct image sheaf
$(p_C)_* (N_{S/X}^* \otimes \sO_{C \times \tilde l}(0,-2))$ has rank two.
Moreover
we can repeat the argument from the proof of Proposition \ref{prop:N2} to establish a subbundle $\sO_{C \times \tilde l}(-7,2)^{\oplus 2} \rightarrow N^*_{S/X}$. The 
description of $T$ is now straightforward.
\end{proof}

We will now start the construction of the antiflip $X \dashrightarrow X^-$.
 
{\it Step 1.} Let $$\mu_1: X_1 \to X$$
be the blow-up of $X_1$ along $S$, and denote by $Y_1$ the strict transform of $Y$. 
Denote by 
$$
E_1 \simeq \PP(N_{S/X}^*) \rightarrow S
$$ 
the exceptional divisor, and by
$$
F_1 \simeq \PP(N^*_{S/Y})
$$
the exceptional locus of the induced blow-up $Y_1 \to Y$.
By Proposition \ref{prop:N3} and Proposition \ref{prop:N2} there exists
a surface 
\begin{equation} \label{surf} 
S_1 = \PP(\sO_{C \times \tilde l}(-5,1)) \subset F_1 \subset E_1
\end{equation} 
corresponding to a quotient
$N_{S/X}^* \twoheadrightarrow \sO_{C \times \tilde l}(-5,1).$

\begin{lemma} \label{lemma-normal-S1}
We have 
$$
N^*_{S_1/F_1} \simeq \sO_{C \times \tilde l}(-2,1)^{\oplus 2} 
$$
and
\begin{equation} \label{equation-conormal-1}
N_{S_1/Y_1}^* \simeq p_C^* W_C \otimes \sO_{C \times \tilde l}(0,1) 
\end{equation}
where $W_C$ is a rank three vector bundle on $C$ given by an extension
$$
0 \rightarrow \sO_C(-5) \rightarrow W_C \rightarrow \sO_C(-2)^{\oplus 2} \rightarrow 0.
$$
\end{lemma}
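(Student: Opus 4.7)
The plan is to analyze the nested inclusion $S_1 \subset F_1 \subset Y_1$ and extract both conormal bundles from normal bundle sequences. The key observation, coming from \eqref{surf} and Proposition \ref{prop:N2}, is that $S_1$ is a section of the $\PP^2$-bundle $F_1 = \PP(N^*_{S/Y}) \to S$ corresponding to the quotient $N^*_{S/Y} \twoheadrightarrow L := \sO_{C \times \tilde l}(-5,1)$ whose kernel is the rank two bundle $K := \sO_{C \times \tilde l}(-7,2)^{\oplus 2}$.

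\textbf{Step 1: the bundle $N^*_{S_1/F_1}$.} First I would apply the standard formula for the normal bundle of a section of a Grothendieck projectivisation $\PP(\sE) \to B$ determined by a quotient $\sE \twoheadrightarrow L$ with kernel $K$: the relative Euler sequence yields $N_{\mathrm{section}/\PP(\sE)} \simeq K^* \otimes L$. Plugging in the data above gives $N_{S_1/F_1} \simeq \sO_{C \times \tilde l}(2,-1)^{\oplus 2}$, which dualizes to the asserted $N^*_{S_1/F_1} \simeq \sO_{C \times \tilde l}(-2,1)^{\oplus 2}$.

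\textbf{Step 2: the conormal of $F_1$ in $Y_1$ restricted to $S_1$.} Next I would use the blow-up identity $\sO_{Y_1}(-F_1)|_{F_1} \simeq \sO_{F_1}(1)$ (in Grothendieck's convention, since $F_1 = \PP(N^*_{S/Y})$ is the exceptional divisor). Restricting to $S_1$ and using $\sO_{F_1}(1)|_{S_1} \simeq L$ gives $N^*_{F_1/Y_1}|_{S_1} \simeq \sO_{C \times \tilde l}(-5,1)$. Assembling this with Step 1 via the dual conormal sequence of the chain $S_1 \subset F_1 \subset Y_1$ produces
\begin{equation}
0 \to \sO_{C \times \tilde l}(-5,1) \to N^*_{S_1/Y_1} \to \sO_{C \times \tilde l}(-2,1)^{\oplus 2} \to 0. \label{NSequenceplan}
\end{equation}

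\textbf{Step 3: descent to $C$.} Finally I would show that $N^*_{S_1/Y_1} \otimes \sO_{C \times \tilde l}(0,-1)$ is pulled back from $C$. Restricting the twist of \eqref{NSequenceplan} by $\sO(0,-1)$ to any fibre $\{c\} \times \tilde l$ of $p_C$ yields an extension of $\sO_{\tilde l}^{\oplus 2}$ by $\sO_{\tilde l}$, which is automatically split because $\Ext^1(\sO_{\tilde l}, \sO_{\tilde l}) = 0$. By Grauert's cohomology-and-base-change theorem, $W_C := (p_C)_*\bigl(N^*_{S_1/Y_1} \otimes \sO_{C \times \tilde l}(0,-1)\bigr)$ is therefore a rank three vector bundle on $C$, and the adjunction map $p_C^* W_C \to N^*_{S_1/Y_1} \otimes \sO_{C \times \tilde l}(0,-1)$ is an isomorphism since it is a surjection of bundles of equal rank (fibrewise non-zero on three generators). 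Pushing the twisted version of \eqref{NSequenceplan} forward by $p_C$, and using $R^1 (p_C)_* \sO_{C \times \tilde l}(-5,0) = 0$ together with $(p_C)_* \sO_{C \times \tilde l}(a,0) = \sO_C(a)$, then produces the required extension $0 \to \sO_C(-5) \to W_C \to \sO_C(-2)^{\oplus 2} \to 0$.

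\textbf{Expected obstacle.} The only non-formal point is the descent argument in Step 3; the rest is purely mechanical manipulation of normal bundles of sections and exceptional divisors. The descent itself is straightforward once one recognizes that the extension on each $\tilde l$-fibre splits for degree reasons.
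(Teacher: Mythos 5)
Your proposal is correct and follows essentially the same route as the paper: identify $S_1$ as the section of $F_1=\PP(N^*_{S/Y})$ given by the quotient $N^*_{S/Y}\twoheadrightarrow \sO_{C\times\tilde l}(-5,1)$, compute $N^*_{S_1/F_1}$ from Proposition \ref{prop:N2}, assemble the conormal sequence for $S_1\subset F_1\subset Y_1$, and descend to $C$ by twisting with $\sO_{C\times\tilde l}(0,-1)$ and pushing forward via $p_C$. Your Step 3 in fact spells out the surjectivity of the relative evaluation map (fibrewise splitting plus Grauert) more explicitly than the paper does.
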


\begin{proof}
The expression for the conormal bundle $N^*_{S_1/F_1} = N^*_{\PP(\sO_{C \times \tilde l}(-5,1))/ \PP(N^*_{S/Y})}$ is immediate from Proposition \ref{prop:N2}.
For the computation of $N_{S_1/Y_1}^*$ consider the exact sequence
$$
0 \rightarrow N_{F_1/Y_1}^* \otimes \sO_{S_1} \simeq \sO_{C \times \tilde l}(-5,1) \rightarrow  N_{S_1/Y_1}^* \rightarrow N_{S_1/F_1}^* \simeq \sO_{C \times \tilde l}(-2,1)^{\oplus 2} \rightarrow 0
$$
and observe that $N_{S_1/F_1}^* \otimes \sO_{\tilde l} \simeq \sO_{\tilde l}(1)^{\oplus 3}$.
Note that $S_1 \simeq S = C \times \tilde l$, and let 
$p_C: S_1 \to C$ denote the projection. 
Twisting the exact sequence with $\sO_{C \times \tilde l}(0,-1)$ and pushing forward via $p_C$ we obtain an exact sequence
$$
0 \rightarrow \sO_C(-5) \rightarrow W_C := (p_C)_* (N_{S_1/F_1}^* \otimes \sO_{C \times \tilde l}(0,-1)) \rightarrow \sO_C(-2)^{\oplus 2} \rightarrow 0.
$$
Since the relative evaluation map
$$
p_C^* \left( (p_C)_* (N_{S_1/F_1}^* \otimes \sO_{C \times \tilde l}(0,-1)) \right)
\rightarrow N_{S_1/F_1}^* \otimes \sO_{C \times \tilde l}(0,-1)
$$
is surjective, this shows the statement.
\end{proof}

Using the notation of Proposition \ref{prop:N3} we set
$$
T_1 := \mathbb P(T) \subset \PP(N_{S/X}^*) = E_1,
$$
so that $T_1 \cap Y_1 = S_1$. 
In order to simplify the notation, we denote by $\holom{\mu_1}{T_1}{S_1}$
the restriction of $\mu_1$ to $T_1$.
Since $T \otimes \sO_{\tilde l} = \sO_{\tilde l}(1) \oplus \sO_{\tilde l}(1)$, we may write 
\begin{equation} \label{definitionUC}
T \simeq  p_C^*(U_C) \otimes \sO_{C \times \tilde l}(0,1) 
\end{equation}
with a rank 2-vector bundle $U_C \rightarrow C$ so that 
$ T_1 =  \PP(T) \simeq \PP(p_C^*(U_C))$ and we have a 
commutative diagram
\begin{equation} \label{bigdiagram1}
\xymatrix{
T_1  \simeq \PP(T) \simeq S_1 \times_C \PP(U_C) \ar[d]^{\mu_1} \ar[rd]^{q_F} & 
\\
S_1 \simeq C \times \tilde l \ar[d]^{p_C} & \PP(U_C) \ar[ld]^{\pi_C} & 
\\
C & & 
}
\end{equation}
Let 
$$
\holom{\mu_2}{X_2}{X_1}
$$ 
be the blow-up of $X_1$ along $T_1$, and denote by
$E_2 \subset X_2$ the exceptional divisor. For 
simplicity's sake denote by
$\holom{\mu_2}{E_2}{T_1}$
the restriction of $\mu_2$ to $E_2 \simeq \PP(N^*_{T_1/X_1})$. 

\begin{lemma} \label{lemma-E2}
The divisor $E_2$ is a $\PP^1$-bundle $\holom{\mu_3}{E_2}{W}$
over a fourfold $W$ that is a $\PP^2$-bundle $\holom{p_F}{W \simeq \PP(U_F)}{\PP(U_C)}$.

Moreover we have a rank two quotient bundle
$$
U_F \rightarrow \left(\pi_C^* \sO_C(-7)^{\oplus 2} \right) \otimes \zeta^*_{\PP(U_C)}
$$
such that
$$
\PP(N_{T_1/E_1}^*) \simeq T_1 \times_{\PP(U_C)} \PP(\left( \pi_C^* \sO_C(-7)^{\oplus 2} \right) \otimes \zeta^*_{\PP(U_C)})
$$
and the restriction of $\mu_3$ to $\PP(N_{T_1/E_1}^*) \subset E_2$ corresponds to the projection on the second factor.
\end{lemma}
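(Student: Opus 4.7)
The strategy is to compute the conormal bundle $N_{T_1/X_1}^*$ explicitly from the flag $T_1 \subset E_1 \subset X_1$, and then read off the claimed structures.

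First I would start from the standard normal bundle exact sequence (dualised):
$$ 0 \to N_{E_1/X_1}^*|_{T_1} \to N_{T_1/X_1}^* \to N_{T_1/E_1}^* \to 0. $$
The first term equals $\zeta_T$ by the blow-up description of $E_1$ applied to the subbundle $T_1 = \PP(T) \hookrightarrow E_1 = \PP(N_{S/X}^*)$. The third term is given by the projective-subbundle conormal formula: $N_{T_1/E_1}^* \simeq \pi_T^* K_T \otimes \zeta_T^{-1}$, where $K_T = \sO_{C \times \tilde l}(-7,2)^{\oplus 2}$ is the kernel appearing in Proposition~\ref{prop:N3}. Combining with the decomposition $T \simeq p_C^* U_C \otimes \sO_{C \times \tilde l}(0,1)$ from \eqref{definitionUC} (which gives $\zeta_T \simeq q_F^* \zeta_{\PP(U_C)} \otimes \pi_T^* \sO_{C \times \tilde l}(0,1)$), a direct substitution yields
$$ N_{T_1/E_1}^* \simeq q_F^* A \otimes \pi_T^* \sO_{C \times \tilde l}(0,2), \qquad A := \pi_C^* \sO_C(-7)^{\oplus 2} \otimes \zeta_{\PP(U_C)}^*. $$
Since projectivisation is invariant under line-bundle twists, this immediately yields the fibre-product identification $\PP(N_{T_1/E_1}^*) \simeq T_1 \times_{\PP(U_C)} \PP(A)$, which is the divisor $E_1' \cap E_2 \subset E_2$ asserted in the lemma.

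For the $\PP^1$-bundle structure $\mu_3: E_2 \to W$, the key observation is that the above extension splits. A Leray spectral sequence along $q_F: T_1 \to \PP(U_C)$ shows $\Ext^1(N_{T_1/E_1}^*, \zeta_T) = H^1(T_1, q_F^*(A^* \otimes \zeta_{\PP(U_C)}) \otimes \pi_T^*\sO_{C\times \tilde l}(0,-1)) = 0$, using that $R^i(p_C)_* \sO_{C \times \tilde l}(0,-1) = 0$ for $i=0,1$ (fibrewise on $\tilde l \simeq \PP^1$). Hence
$$ N_{T_1/X_1}^* \simeq \zeta_T \oplus \bigl(q_F^* A \otimes \pi_T^* \sO_{C \times \tilde l}(0,2)\bigr) $$
on $T_1$, producing a natural codim-two section $T_1 \hookrightarrow E_2$ (the quotient to the $\zeta_T$-summand) disjoint from the divisor $\PP(N_{T_1/E_1}^*)$. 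Fibrewise over $t \in T_1$, the projective plane $\PP(N_{T_1/X_1}^*|_t)$ carries a distinguished point $[\zeta_T|_t]$ and a distinguished line $\PP(N_{T_1/E_1}^*|_t)$, and the $\PP^1$-fibres of $\mu_3$ are the lines in this $\PP^2$ joining the point to points of the line, globally parametrised by a fourfold $W$. One then defines $U_F$ on $\PP(U_C)$ as a rank-three extension $0 \to K \to U_F \to A \to 0$, with $K$ the line bundle determined by the $q_F$-pushforward structure of $\zeta_T$ (a natural candidate is $K \simeq \zeta_{\PP(U_C)}^*$, to be pinned down by the explicit intersection computations), sets $W := \PP(U_F)$, and checks that the rank-two quotient $U_F \to A$ realises $\PP(A) \hookrightarrow W$ as the required divisor.

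The main obstacle is showing that $\mu_3$ is a genuine \emph{morphism} and a bona fide $\PP^1$-bundle, not merely a rational map. The naive ``projection from the section'' candidate is only rational on $E_2$, undefined along the codim-two section $T_1 \hookrightarrow E_2$; the argument must exploit the global pullback structure inherited from $q_F: T_1 \to \PP(U_C)$ (which forces the lines from $[\zeta_T|_t]$ to $\PP(N_{T_1/E_1}^*|_t)$ to vary in a one-parameter family compatibly as $t$ varies in a $\tilde l$-fibre) to exhibit $\mu_3$ as an everywhere-defined morphism with uniformly $\PP^1$ fibres, targeting the specified $\PP^2$-bundle $\PP(U_F) \to \PP(U_C)$. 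Once $\mu_3$ is constructed, the identification of its restriction to $\PP(N_{T_1/E_1}^*)$ with the second projection of the fibre product follows from the explicit description of $\PP(A) \hookrightarrow W$ and the normal bundle of the section.
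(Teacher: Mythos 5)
Your computation of $N_{T_1/E_1}^*$ and the resulting fibre-product description of $\PP(N_{T_1/E_1}^*)$ is essentially the paper's first step, but the rest of the argument has two genuine problems. First, a twist is off by one: from $N_{T_1/E_1}^* \simeq \mu_1^*\sO_{C\times\tilde l}(-7,2)^{\oplus 2}\otimes\zeta_{\PP(T)}^*$ and $\zeta_{\PP(T)}\simeq q_F^*\zeta_{\PP(U_C)}\otimes\mu_1^*\sO_{C\times\tilde l}(0,1)$ one gets $N_{T_1/E_1}^*\simeq q_F^*A\otimes\mu_1^*\sO_{C\times\tilde l}(0,1)$, not $\otimes\,\mu_1^*\sO_{C\times\tilde l}(0,2)$. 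This kills your splitting argument: the correct $\Ext$-group is $\Ext^1\bigl(q_F^*A\otimes\mu_1^*\sO(0,1),\,q_F^*\zeta_{\PP(U_C)}\otimes\mu_1^*\sO(0,1)\bigr)\simeq H^1\bigl(\PP(U_C),\pi_C^*\sO_C(7)^{\oplus 2}\otimes\zeta_{\PP(U_C)}^{\otimes 2}\bigr)\simeq H^1\bigl(C,\sO_C(7)^{\oplus 2}\otimes\Sym^2 U_C\bigr)$, which is nonzero (the graded pieces are $\sO_C(-7),\sO_C(-5),\sO_C(-3)$ on $C\simeq\PP^1$), so there is no cheap vanishing and no reason to expect the sequence for $N^*_{T_1/X_1}$ to split. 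The paper never needs a splitting: it only needs that $N^*_{T_1/X_1}\otimes\mu_1^*\sO_{C\times\tilde l}(0,-1)$ is a $q_F$-pullback, which follows because after this twist all three terms of the conormal sequence are pulled back from $\PP(U_C)$ and $q_F$ has fibres $\PP^1$; pushing forward yields the (possibly non-split) rank-three bundle $U_F$ with sub-line-bundle $\zeta_{\PP(U_C)}$ (not $\zeta_{\PP(U_C)}^*$ as you guess) and quotient $A$.

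Second, and more seriously, your geometric picture of $\mu_3$ is wrong. You describe its fibres as lines inside the $\PP^2$-fibres of $E_2\to T_1$ through a distinguished point; that would make $\mu_3$ a fibrewise linear projection, which (as you yourself note) is only a rational map and is not what happens here. The actual structure is $E_2\simeq\PP(q_F^*U_F)\simeq T_1\times_{\PP(U_C)}\PP(U_F)$ with $\mu_3$ the second projection: its fibres are \emph{horizontal} with respect to $E_2\to T_1$, namely sections of $E_2 \to T_1$ over the $q_F$-fibres $\tilde l\subset T_1$, mapping isomorphically onto them. This identification is the entire content of the lemma and is exactly what your proposal leaves as "the main obstacle"; the deferred appeal to "the global pullback structure" is not a sketch of the missing argument but a restatement of what has to be proved. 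Once the pullback description of $N^*_{T_1/X_1}$ is established, both the morphism property of $\mu_3$ and the identification of its restriction to $\PP(N^*_{T_1/E_1})$ with the second projection of $T_1\times_{\PP(U_C)}\PP(A)$ are immediate, with no case analysis about joining points to lines.
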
 

\begin{proof}
Since $T_1 = \PP(T) \subset \PP(N_{S/X}^*)=E_1$ we have
$N_{T_1/E_1}^* \simeq \left(\mu_1^* \sO_{C \times \tilde l}(-7,2)^{\oplus 2}\right) \otimes \zeta^*_{\PP(T)}$
by Proposition \ref{prop:N3}. Consider now
the exact sequence
$$
0 \rightarrow N_{E_1/X_1}^* \otimes \sO_{\PP(T)} \simeq \zeta_{\PP(T)} \rightarrow
N^*_{T_1/X_1} \rightarrow \left(\mu_1^* \sO_{C \times \tilde l}(-7,2)^{\oplus 2}\right) \otimes \zeta^*_{\PP(T)} \rightarrow 0.
$$
Since   $T \simeq p_C^*(U_C) \otimes \sO_{C \times \tilde l}(0,1)$ we have
$$
 \zeta_{\PP(T)} \simeq q_F^* \zeta_{\PP(U_C)} \otimes \mu_1^* \sO_{C \times \tilde l}(0,1),
$$
where $\zeta_{\PP(U_C)}$ is the tautological bundle on $\PP(U_C)$. Thus the exact sequence above simplifies to
$$
0 \rightarrow q_F^* \zeta_{\PP(U_C)} \otimes \mu_1^* \sO_{C \times \tilde l}(0,1) \rightarrow
N^*_{T_1/X_1} \rightarrow 
\left( \mu_1^* \sO_{C \times \tilde l}(-7,1)^{\oplus 2} \right) \otimes q_F^* \zeta^*_{\PP(U_C)} 
\rightarrow 0
$$
Twisting by $\mu_1^* \sO_{C \times \tilde l}(0,-1)$ and pushing forward via $q_F$ we argue as in the proof of Lemma \ref{lemma-normal-S1} to obtain
\begin{equation} \label{normal-T1}
N^*_{T_1/X_1} \simeq q_F^* U_F \otimes \mu_1^* \sO_{C \times \tilde l}(0,1)
\end{equation}
where $U_F \rightarrow \PP(U_C)$ is a rank three vector bundle given by an extension
$$
0 \rightarrow  \zeta_{\PP(U_C)} \rightarrow
U_F \rightarrow 
\left( \pi_C^* \sO_C(-7)^{\oplus 2} \right) \otimes \zeta^*_{\PP(U_C)} 
\rightarrow 0.
$$
Now the conclusion for $E_2$ immediate, since
$$
E_2 \simeq \PP(N^*_{\PP(T)/X_1}) \simeq \PP(q_F^* U_F) \simeq T_1 \times_{\PP(U_C)} \PP(U_F).
$$ 
For the statement about $\PP(N_{T_1/E_1}^*)$ just observe that
the isomorphism $N^*_{T_1/X_1} \simeq q_F^* U_F \otimes \mu_1^* \sO_{C \times \tilde l}(0,1)$
maps $N_{T_1/E_1}^*$ onto $q_F^* \left( (\pi_C^* \sO_C(-7)^{\oplus 2}) \otimes \zeta^*_{\PP(U_C)} \right)  \otimes \mu_1^* \sO_{C \times \tilde l}(0,1)$.
\end{proof}

We summarise the situation in a commutative diagram
\begin{equation} \label{bigdiagram2}
\xymatrix{
&  E_2 \simeq \PP(N^*_{T_1/X_1}) \simeq T_1 \times_{\PP(U_C)} \PP(U_F) \ar[ld]^{\mu_2} \ar[rd]^{\mu_3} &
\\
T_1  \simeq \PP(T) \ar[d]^{\mu_1} \ar[rd]^{q_F} & & W \simeq \PP(U_F) \ar[ld]^{p_F}
\\
S_1 \simeq C \times \tilde l \ar[d] & \PP(U_C) \ar[ld]^{\pi_C} & 
\\
C & & 
}
\end{equation}
Let $\bar l \simeq \PP^1$ be a fibre of the fibration $q_F$. Then it is not difficult to see that
$$
N_{\bar l/X_1}^* \simeq \sO_{\bar l}^{\oplus 2} \oplus \sO_{\bar l}(1)^{\oplus 3},
$$ 
the trivial part corresponding to the pull-back of $T_{\PP(U_C)}$. It is well-known
that a rational curve $\PP^1 \subset Q$ in a smooth fourfold $Q$
with $N^*_{\PP^1/Q} \simeq \sO_{\PP^1}(1)^{\oplus 3}$ can be flipped, i.e. there exists
a smooth fourfold $Q^-$ containing a $\PP^2$ with $N_{\PP^2/Q^-}^* \simeq \sO_{\PP^2}(1)^{\oplus 2}$
and $Q \setminus \PP^1 \simeq Q^- \setminus \PP^2$. Indeed this is the inverse
of the smooth fourfold flip described by Kawamata \cite[Thm.1.1]{Kaw89}.
The next statement shows that in our situation we can construct a relative version of Kawamata's antiflip:

\begin{proposition}
The fibration $\holom{\mu_3}{E_2}{W}$ extends to a bimeromorphic map
$$
\holom{\mu_3}{X_2}{X_3}
$$
of compact complex manifolds such that $X_2$ is the blow-up of $X_3$ along $W$.
\end{proposition}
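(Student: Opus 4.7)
The plan is to invoke the Fujiki--Nakano contraction theorem: if a smooth divisor $E$ in a compact complex manifold $Z$ admits a $\PP^r$-bundle structure $p \colon E \to B$ over a compact manifold $B$ such that $N_{E/Z}$ restricts to $\sO_{\PP^r}(-1)$ on every fibre of $p$, then there is a smooth blow-down $Z \to Z'$ onto a compact complex manifold $Z'$ realising $Z$ as the blow-up of $Z'$ along a codimension-$(r+1)$ submanifold isomorphic to $B$. In our situation $r=1$, so the resulting $X_3$ will contain $W$ as a codimension-two submanifold.

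The first thing to check is that $\mu_3 \colon E_2 \to W$ is indeed a $\PP^1$-bundle: this is immediate from $E_2 \simeq T_1 \times_{\PP(U_C)} \PP(U_F)$ together with the fact that $q_F \colon T_1 \to \PP(U_C)$ is itself a $\PP^1$-bundle (cf. Lemma \ref{lemma-E2}); its fibres are exactly the curves $\bar l$.

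The heart of the argument is the verification that $N_{E_2/X_2}$ restricts to $\sO_{\bar l}(-1)$ on every fibre $\bar l$ of $\mu_3$. Since $E_2$ is the exceptional divisor of the blow-up $\mu_2$ along the smooth centre $T_1$, one has $N_{E_2/X_2} \simeq \zeta^*_{\PP(N^*_{T_1/X_1})}$. Using the identification
$$
N^*_{T_1/X_1} \simeq q_F^* U_F \otimes \mu_1^* \sO_{C \times \tilde l}(0,1)
$$
from \eqref{normal-T1}, together with the standard fact that twisting a vector bundle by a line bundle does not change its Grothendieck projectivisation but twists the tautological class correspondingly, one obtains
$$
\zeta_{E_2} \simeq \mu_3^* \zeta_{\PP(U_F)} \otimes (\mu_2|_{E_2})^* \mu_1^* \sO_{C \times \tilde l}(0,1).
$$
On a fibre $\bar l$ of $\mu_3$ the first factor is trivial. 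The restriction $\mu_2|_{\bar l}$ identifies $\bar l$ with a fibre of $q_F \subset T_1$, which is in turn identified by $\mu_1$ with a fibre $\{c_0\} \times \tilde l$ of the projection $S_1 = C \times \tilde l \to C$; hence the second factor restricts to $\sO_{\tilde l}(1)$. Dualising gives $N_{E_2/X_2}|_{\bar l} \simeq \sO_{\bar l}(-1)$.

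With this fibrewise negativity in hand, the Fujiki--Nakano contraction theorem produces a bimeromorphic morphism $\mu_3 \colon X_2 \to X_3$ of compact complex manifolds whose restriction to $E_2$ coincides with the projection onto $W$ and which realises $X_2$ as the blow-up of $X_3$ along (a submanifold naturally identified with) $W$. I expect the main technical obstacle to be precisely the normal-bundle computation above: it is essentially a bookkeeping exercise, but it requires one to track the line-bundle twist appearing in Lemma \ref{lemma-E2} through the Grothendieck projectivisation, and to distinguish carefully between fibres of $\mu_2|_{E_2}$ and fibres of $\mu_3$. Once this is settled, the existence of $X_3$ is a formal consequence of Fujiki--Nakano.
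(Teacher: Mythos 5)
Your proposal is correct and follows essentially the same route as the paper: both reduce the statement to the Fujiki(--Nakano) blow-down criterion and verify that $N_{E_2/X_2}$ has degree $-1$ on the $\mu_3$-fibres by combining $N_{E_2/X_2}^* \simeq \zeta_{\PP(N^*_{T_1/X_1})}$ with the identification $N^*_{T_1/X_1} \simeq q_F^* U_F \otimes \mu_1^* \sO_{C \times \tilde l}(0,1)$ and the chain of isomorphisms (fibres of $\mu_3$) $\simeq$ ($q_F$-fibres) $\simeq$ ($p_C$-fibres). The fibre-tracking step you flag as the main technical point is exactly the one the paper carries out.
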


\begin{remark*} At this point we do not claim that $X_3$ is a projective manifold, although this will follow from Section 5. 
\end{remark*}

\begin{proof}
Fujiki's criterion \cite[Thm.1]{Fuj75} guarantees the existence of $\mu_3$ if we check that the conormal bundle $N_{E_2/X_2}^*$
is ample on the fibres of the $\PP^1$-bundle $\mu_3$. We will prove that $N_{E_2/X_2}^*$ has degree one on the fibres, so the bimeromorphic map is a blow-up of the manifold $X_3$ along $W$.

Recall first that by \eqref{normal-T1} we have
$$
N^*_{T_1/X_1} \simeq q_F^* U_F \otimes \mu_1^* \sO_{C \times \tilde l}(0,1)
$$
Since $N_{E_2/X_2}^* \simeq \zeta_{\PP(N^*_{T_1/X_1})}$ we obtain
$$
N_{E_2/X_2}^* \simeq \mu_3^* \zeta_{U_F} \otimes \mu_2^* \mu_1^* \sO_{C \times \tilde l}(0,1).
$$
Since $E_2 \simeq T_1 \times_{\PP(U_C)} W$, the $\mu_3$-fibres map isomorphically onto the $q_F$-fibres. 
Since $T_1 \simeq S_1 \times_C \PP(U_C)$ the $q_F$-fibres map isomorphically onto the $p_C$-fibres.  
Yet $\mu_1^* \sO_{C \times \tilde l}(0,1)$ has degree one on the $p_C$-fibres, so the statement follows.
\end{proof}

\begin{proposition} \label{proposition-strict-transform}
Denote by $Y_3 \subset X_3$ the strict transform of $Y_1 \subset X_1$ under the ``Kawamata anti-flip'' $\mu_3 \circ \mu_2^{-1}$. Then $Y_3$ does not contain $W$.
\end{proposition}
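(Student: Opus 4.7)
My plan is to reduce the claim to a dimension count on $W$: since $\mu_3$ is an isomorphism away from $E_2$ and contracts $E_2$ onto $W$, it is enough to show that the image $\mu_3(Y_2 \cap E_2)$ is a proper subvariety of the fourfold $W$. Indeed, then $Y_3 \cap W = \mu_3(Y_2) \cap W = \mu_3(Y_2 \cap E_2) \subsetneq W$, hence $W \not\subset Y_3$.

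First I would observe that $Y_2 \cap E_2$ lies over $S_1 = Y_1 \cap T_1$. Since $\mu_2|_{Y_2} : Y_2 \to Y_1$ is bimeromorphic and $\mu_2(E_2) = T_1$, one has $\mu_2(Y_2 \cap E_2) \subseteq Y_1 \cap T_1 = S_1$, so
$$
Y_2 \cap E_2 \;\subseteq\; \mu_2^{-1}(S_1) \cap E_2 \;=\; E_2|_{S_1}.
$$
Using the fibre-product description $E_2 \simeq T_1 \times_{\PP(U_C)} W$ from Lemma \ref{lemma-E2}, this gives $E_2|_{S_1} \simeq S_1 \times_{\PP(U_C)} W$, whose image under the projection $\mu_3$ to the second factor is
$$
\mu_3(E_2|_{S_1}) \;=\; p_F^{-1}\bigl(\sigma(S_1)\bigr),
$$
where $\sigma := q_F|_{S_1} : S_1 \hookrightarrow T_1 \to \PP(U_C)$.

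The heart of the argument is to check that $\sigma(S_1)$ is only a curve, not a surface. The section $S_1 = \PP(\sO_{C \times \tilde l}(-5,1)) \subset \PP(T) = T_1$ corresponds to a quotient $T \twoheadrightarrow \sO_{C \times \tilde l}(-5,1)$; twisting by $\sO_{C \times \tilde l}(0,-1)$ and using \eqref{definitionUC} to identify $T \otimes \sO_{C \times \tilde l}(0,-1) \simeq p_C^* U_C$, this becomes a quotient $p_C^* U_C \twoheadrightarrow p_C^* \sO_C(-5)$. Since $(p_C)_* \sO_{C \times \tilde l} = \sO_C$, the natural map $p_C^* \sHom(U_C, \sO_C(-5)) \to \sHom(p_C^* U_C, p_C^* \sO_C(-5))$ is an isomorphism on global sections, so this quotient is pulled back along $p_C$ from a quotient $U_C \twoheadrightarrow \sO_C(-5)$ on $C$, which must coincide with the one entering the extension defining $U_C$ in the proof of Lemma \ref{lemma-E2}. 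Therefore $\sigma$ factors as
$$
\sigma : S_1 \simeq C \times \tilde l \xrightarrow{p_C} C \xrightarrow{\tau} \PP(U_C),
$$
with $\tau$ a section of $\pi_C$, so $\sigma(S_1) = \tau(C)$ is a smooth rational curve in the surface $\PP(U_C)$.

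Since $p_F : W \to \PP(U_C)$ is a $\PP^2$-bundle, $p_F^{-1}(\sigma(S_1))$ has dimension $3$, strictly less than $\dim W = 4$. Combined with $\mu_3(Y_2 \cap E_2) \subseteq p_F^{-1}(\sigma(S_1))$ this completes the argument. The main delicate point, and the only one going beyond formal bookkeeping, is the identification of the defining quotient of $S_1$ as a pullback from $C$ under \eqref{definitionUC}; this is essentially forced by the vanishing $R^ip_{C*}\sO_{C\times\tilde l}=0$ for $i\geq 1$ that already underlies the computation of $U_C$ in Lemma \ref{lemma-E2}.
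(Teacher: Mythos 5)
Your proof is correct and follows essentially the same route as the paper: the heart of both arguments is the identification of $S_1\subset T_1=\PP(p_C^*U_C)$ with the pullback $q_F^{-1}(\PP(\sO_C(-5)))$ of a section of $\PP(U_C)\to C$, so that $S_1$ misses the general $q_F$-fibre. You package the reduction as a dimension count on $\mu_3(Y_2\cap E_2)\subseteq p_F^{-1}(q_F(S_1))$ where the paper uses the intersection number $Y_1\cdot\bar l=0$, but this is only a cosmetic difference.
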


\begin{proof}
Let $Y_2 \subset X_2$ be the strict transform of $Y_1$. The statement is equivalent to showing that $Y_2$ is disjoint from a general exceptional curve of $\mu_3$. Since $T_1$ is not contained in $Y_1$ we have $Y_2 \simeq \mu_2^* Y_1$. Since the $\mu_3$-fibres map onto $q_F$-fibres it is sufficient to show that $Y_1 \cdot \bar l=0$ where $\bar l \subset T_1$ is a $q_F$-fibre. By definition, cf. \eqref{surf}, the surface $S_1$ is given by the quotient 
$N^*_{S/X} \rightarrow N^*_{S/Y} \rightarrow \sO_{C \times \tilde l}(-5,1)$.
By Proposition \ref{prop:N3} the quotient map $N^*_{S/X} \rightarrow \sO_{C \times \tilde l}(-5,1)$
factors into 
$$
N^*_{S/X} \rightarrow T \rightarrow \sO_{C \times \tilde l}(-5,1),
$$
so we have $Y_1 \cap T_1 = S_1$ and we are left to show that $S_1$ is disjoint from the general $q_F$-fibre.
By \eqref{definitionUC} we have 
$$
T \simeq p_C^* U_C \otimes \sO_{C \times \tilde l}(0,1)
$$
and $U_C \rightarrow C$ is given by an extension
$$
0 \rightarrow \sO_C(-7) \rightarrow U_C \rightarrow \sO_C(-5) \rightarrow 0.
$$
Hence we obtain that the isomorphism $T_1=\PP(T) \rightarrow \PP(p_C^* U_C)$
maps the surface $S_1=\PP(\sO_{C \times \tilde l}(-5,1))$ onto  
$$
\PP(p_C^* \sO_C(-5)) \simeq q_F^* \PP(\sO_C(-5))
$$
where in the last step we consider $\PP(\sO_C(-5))$ as a divisor in $\PP(U_C)$.
In particular its intersection number with a general $q_F$-fibre is zero.
\end{proof}

\begin{proposition} \label{proposition-p3-bundle}
The strict transform $\tilde E_1 \subset X_3$ of $E_1$ has a $\PP^3$-bundle structure  $$
\holom{\mu_4}{\tilde E_1}{C \times D},
$$
where $D \simeq \PP^1$.
Moreover the fourfold $W$ is not contained $\tilde E_1$.
\end{proposition}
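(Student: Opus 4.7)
My proof plan is to construct $\mu_4$ as the composition of a natural $\PP^2$-bundle structure on the blow-up $E_1'$ with a coordinate projection, descend this through the antiflip $\mu_3$, and verify the $\PP^3$-bundle structure by a fiberwise analysis identifying the contracted locus with a blow-up ruling. The second assertion $W \not\subset \tilde E_1$ will follow from a direct dimension count for the intersection.

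For the construction, I would apply the standard blow-up formula to the exact sequence of Proposition \ref{prop:N3},
$$
0 \to A \to N^*_{S/X} \to T \to 0 \quad \text{with} \quad A \simeq \sO_{C \times \tilde l}(-7, 2)^{\oplus 2},
$$
to identify $E_1' = \mathrm{Bl}_{T_1} E_1$ with the $\PP^2$-bundle $\PP_{\PP(A)}(\sO_{\PP(A)}(1) \oplus T)$. Since $A$ is two copies of a single line bundle, $\PP(A) \simeq S \times \PP^1$; I set $D := \PP^1$. The composition $E_1' \to \PP(A) \simeq C \times \tilde l \times D \to C \times D$ then gives a morphism to $C \times D$. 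To descend this through $\mu_3$, I would verify that each $\mu_3$-fiber is collapsed: by Lemma \ref{lemma-E2}, $E_1' \cap E_2 \simeq T_1 \times \PP^1$ and the $\mu_3$-fibers there are the $\tilde l$-fibers of $T_1 \to \PP(U_C)$, whose image in $\PP(A)$ is the curve $\{c\} \times \tilde l \times \{t\}$, collapsing to the single point $(c, t) \in C \times D$. Hence the morphism factors as $E_1' \to \tilde E_1 \overset{\mu_4}{\to} C \times D$.

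For the $\PP^3$-bundle structure, the splittings $T|_{\{c\} \times \tilde l} \simeq \sO_{\tilde l}(1)^{\oplus 2}$ (from the vanishing $\mathrm{Ext}^1(\sO_{\tilde l}(1), \sO_{\tilde l}(1)) = 0$) and $\sO_{\PP(A)}(1)|_{\{c\} \times \tilde l \times \{t\}} \simeq \sO_{\tilde l}(2)$ show that the fiber of $E_1' \to C \times D$ over $(c, t)$ is $\PP_{\tilde l}(\sO_{\tilde l}(2) \oplus \sO_{\tilde l}(1)^{\oplus 2}) \simeq \mathrm{Bl}_\ell \PP^3$, the blow-up of $\PP^3$ along a line. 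The restriction of $E_1' \cap E_2$ to this fiber is the exceptional divisor $\PP(N_{\ell/\PP^3}) \simeq \PP^1 \times \PP^1$, and tracing through Lemma \ref{lemma-E2} shows the $\mu_3$-fibers are exactly the fibers of the projection $\PP(N_{\ell/\PP^3}) \to \ell$ -- that is, the ruling contracted by the divisorial blow-down $\mathrm{Bl}_\ell \PP^3 \to \PP^3$. Each fiber of $\mu_4$ is therefore $\PP^3$; as $C \times D \simeq \PP^1 \times \PP^1$ has trivial Brauer group, the smooth family $\mu_4$ is Zariski-locally trivial, hence a $\PP^3$-bundle.

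Finally, since $\mu_3$ is an isomorphism off $E_2$ and $\mu_3^{-1}(W) = E_2$, one has $\tilde E_1 \cap W = \mu_3(E_1' \cap E_2)$, which by Lemma \ref{lemma-E2} equals the $3$-dimensional subvariety $\PP((\pi_C^* \sO_C(-7)^{\oplus 2}) \otimes \zeta^*_{\PP(U_C)}) \subset W$, properly contained in the $4$-dimensional $W$; thus $W \not\subset \tilde E_1$. The main obstacle is the fiberwise matching: one must verify that the $\mu_3$-fibers correspond to the correct ruling of $\PP(N_{\ell/\PP^3}) \simeq \ell \times \PP(V/\mathrm{span}(\ell))$ so that the fiberwise contraction really is the divisorial blow-down to $\PP^3$ and not some different Mori contraction. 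This requires carefully comparing the factor orderings in the canonical trivialization of $\PP(N_{\ell/\PP^3})$ and the structural isomorphism $\PP(N^*_{T_1/E_1}) \simeq T_1 \times_{\PP(U_C)} \PP((\pi_C^* \sO_C(-7)^{\oplus 2}) \otimes \zeta^*_{\PP(U_C)})$ from Lemma \ref{lemma-E2}, keeping track of which factor of the $\PP^1 \times \PP^1$ corresponds to the blown-up line $\ell$.
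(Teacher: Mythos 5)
Your plan follows the paper's proof essentially step for step: identify $E_1'=\mathrm{Bl}_{T_1}E_1$ as a $\PP^2$-bundle over $\PP(K)\simeq C\times \tilde l\times D$ (the paper writes the bundle as an extension $0\to\sO\to Q\to p^*T\otimes\zeta^*_{\PP(K)}\to 0$ rather than a direct sum, which changes nothing on fibres), observe that the fibres over $C\times D$ are blow-ups of $\PP^3$ along a line, match the $\mu_3$-contracted curves with the ruling contracted by the fibrewise blow-down, and bound $\dim(\tilde E_1\cap W)$ via Lemma \ref{lemma-E2}. The one step you flag as the main obstacle is exactly the verification the paper carries out, and it checks out: the $\mu_3$-fibres of $N\simeq T_1\times_{\PP(U_C)}\PP\bigl((\pi_C^*\sO_C(-7)^{\oplus 2})\otimes\zeta^*_{\PP(U_C)}\bigr)$ are the $q_F$-fibres of $T_1$, which sit inside $N\cap\psi^{-1}(\tilde l)\simeq\tilde l\times\PP^1$ as the curves $\tilde l\times\{pt\}$, i.e. precisely the fibres of the second projection that the fibrewise contraction $\PP(Q\otimes\sO_{\tilde l})\to\PP^3$ collapses, so your approach goes through.
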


\begin{proof}
We start considering the strict transform $E_1' \subset Y_2$, i.e., the blowup of $E_1$ along $S_1$.
Since $E_1 \simeq \PP(N_{S/X}^*)$ and since $T_1 \subset E_1$ corresponds
to the quotient
$$
0 \rightarrow K := \sO_{C \times \tilde l}(-7,2)^{\oplus 2} \rightarrow N_{S/X}^* \rightarrow T \rightarrow 0,
$$
the blowup $E_1'$ has a $\PP^2$-bundle structure $\holom{\psi}{E_1'}{\PP(K)}$. This $\PP^2$-bundle structure corresponds to a rank three vector bundle $Q$ given by an extension
\begin{equation} \label{define-Q}
0 \rightarrow \sO \rightarrow Q \rightarrow  p^* T \otimes \zeta_{\PP(K)}^* \rightarrow 0,
\end{equation}
where $\holom{p}{\PP(K)}{S}$ is the natural map and $\zeta_{\PP(K)}$ the tautological divisor.

Denote by $N \simeq \PP(N^*_{T_1/E_1})$  the exceptional divisor 
of $E_1' \rightarrow E_1$.
With the notation of \eqref{define-Q} the exceptional divisor corresponds to the quotient
$Q \rightarrow p^* T \otimes \zeta_{\PP(K)}^*$. 

Note also that since $K \simeq \sO_{C \times \tilde l}(-7,2)^{\oplus 2}$, we have
$$
\PP(K) \simeq C \times \tilde l \times D,
$$
with $D \simeq \PP^1$.
Hence we have fibration
$$
\tau : E_1' \rightarrow \PP(K) \rightarrow C \times D
$$
such that the fibres are $\PP^2$-bundles over $\tilde l$.
Restricting \eqref{define-Q} to $\tilde l$ we see that
$$
Q \otimes \sO_{\tilde l} \simeq \sO_{\tilde l} \oplus \sO_{\tilde l}(-1)^{\oplus 2},
$$ 
so the fibres of $\tau$ are isomorphic to the blow-up of $\PP^3$ along a line,
and the exceptional divisor of $\PP(Q \otimes \sO_{\tilde l}) \rightarrow \PP^3$
is given by the second projection of 
$$
N \cap \fibre{\psi}{\tilde l} \simeq \PP (p^* T \otimes \zeta_{\PP(K)}^* \otimes \sO_{\tilde l}) \simeq \PP(\sO_{\tilde l}(-1)^{\oplus 2}) \simeq \tilde l \times \PP^1.
$$
Yet these are exactly the fibres of the fibration
$$
N \rightarrow \PP(\left( \pi_C^* \sO_C(-7)^{\oplus 2} \right) \otimes \zeta^*_{\PP(U_C)})
$$
given by Lemma \ref{lemma-E2}, i.e. the restriction of $\mu_3$ to $N$. Since the divisor $N \subset E_1'$
has degree minus one on these curves, we can again apply Fujiki's criterion
\cite[Thm.1]{Fuj75} to obtain a birational morphism
$$
E_1' \rightarrow Z \rightarrow C \times d
$$
such that $\mu_4: Z \rightarrow C \times d$ is a $\PP^3$-bundle. Since
the fibres of  $E_1' \rightarrow Z$ coincide with the fibres of $E_1' \rightarrow \tilde E_1$, we have $Z \simeq \tilde E_1$. 

Finally let us show that $W$ is not contained in $\tilde E_1$. Since
$\mu_3$ is an isomorphism in the complement of $E_2$, 
this comes down to show that 
$$
\mu_3(E_1' \cap E_2 ) = \mu_3(N) = \mu_3(\mathbb P(N^*_{T_1/E_1})) 
$$
is a proper subset of $W$. Yet by Lemma \ref{lemma-E2} the birational
map $\mu_3$ maps the fourfold $N$ onto the threefold
$\PP(\left( \pi_C^* \sO_C(-7)^{\oplus 2} \right) \otimes \zeta^*_{\PP(U_C)})$.
Thus it is a proper subset of the fourfold $W$.
\end{proof}

Finally we construct $\mu_4: X_3 \to X_4$.

\begin{proposition} \label{proposition-mu4}
The restriction of the normal bundle $N_{\tilde E_1/X_3} $ to the fibres of
the $\PP^3$-bundle $\mu_4: \tilde E_1 \to C \times D$ is isomorphic
to $\sO_{\PP^3}(-2)$.

In particular, by Fujiki's criterion \cite[Thm.1]{Fuj75},
there exists a bimeromorphic morphism $\mu_4: X_3 \to X_4$ to a normal compact complex space $X_4$ contracting $\tilde E_1$ onto $C \times D$. 
\end{proposition}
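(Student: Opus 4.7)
The plan is to compute $N_{\tilde E_1/X_3}|_P$ by pulling it back to the corresponding $\PP^2$-bundle fibre $\hat P \subset E_1' \subset X_2$ of $\tau$, where $\sigma := \mu_3|_{\hat P}: \hat P \to P$ is the fibrewise blow-down of $\PP^3$ along a line identified in the proof of Proposition~\ref{proposition-p3-bundle}. Since $W \not\subset \tilde E_1$ by Proposition~\ref{proposition-strict-transform}, the pull-back $\mu_3^* \tilde E_1 = E_1'$ receives no exceptional contribution, hence $N_{E_1'/X_2} \simeq (\mu_3|_{E_1'})^* N_{\tilde E_1/X_3}$ and in particular
$$
N_{E_1'/X_2}|_{\hat P} \simeq \sigma^* \bigl(N_{\tilde E_1/X_3}|_P\bigr).
$$
It therefore suffices to compute the left hand side and recognise it as a pull-back from $P \simeq \PP^3$.

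First, the blow-up relation $\mu_2^* E_1 = E_1' + E_2$ together with $N_{E_1/X_1} \simeq \zeta_{E_1}^{-1}$ gives $N_{E_1'/X_2} \simeq -\mu_2^* \zeta_{E_1} - N$, where $N := E_1' \cap E_2$. The two projective bundle presentations $\mu_2: E_1' = \mathrm{Bl}_{T_1} \PP(N^*_{S/X}) \to E_1$ and $\psi: E_1' = \PP(Q) \to \PP(K)$ are related by $N \simeq \zeta_{E_1'}$ (the exceptional locus coincides with $\PP(Q/\sO)$) and by
$$
\mu_2^* \zeta_{E_1} \simeq \zeta_{E_1'} + \psi^* \zeta_{\PP(K)},
$$
which can be derived formally by identifying $E_1' = \PP(\tilde\sE)$ with $\tilde\sE := \pi_K^* N^*_{S/X} / \ker(\pi_K^* K \to \zeta_{\PP(K)})$ and using $Q \simeq \tilde\sE \otimes \zeta_{\PP(K)}^{-1}$, or alternatively verified by intersecting both sides with the rulings of $\mu_2|_N$ and with a line in a $\psi$-fibre. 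Substituting yields $N_{E_1'/X_2} \simeq -2\zeta_{E_1'} - \psi^* \zeta_{\PP(K)}$.

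Now restrict to $\hat P$. Writing $p: \hat P \to \tilde l$ for the base projection of $\hat P \simeq \PP(\sO_{\tilde l} \oplus \sO_{\tilde l}(-1)^{\oplus 2})$, we have $\zeta_{E_1'}|_{\hat P} = \zeta_{\hat P}$. From $K \simeq \sO_{C \times \tilde l}(-7,2)^{\oplus 2}$ and the resulting trivialisation $\PP(K) \simeq S \times D$, a direct computation gives $\zeta_{\PP(K)} \simeq \pi_S^* \sO_{C \times \tilde l}(-7,2) \otimes p_D^* \sO_D(1)$, hence $\psi^* \zeta_{\PP(K)}|_{\hat P} = p^* \sO_{\tilde l}(2)$. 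Since $\sigma$ is given by the linear system $|\zeta_{\hat P} + p^* \sO_{\tilde l}(1)|$, we have $\sigma^* \sO_{\PP^3}(1) \simeq \zeta_{\hat P} + p^* \sO_{\tilde l}(1)$ and consequently
$$
N_{E_1'/X_2}|_{\hat P} \simeq -2\zeta_{\hat P} - p^* \sO_{\tilde l}(2) \simeq \sigma^* \sO_{\PP^3}(-2),
$$
forcing $N_{\tilde E_1/X_3}|_P \simeq \sO_{\PP^3}(-2)$. The dual $\sO_{\PP^3}(2)$ is ample on every $\mu_4$-fibre, so Fujiki's criterion \cite[Thm.1]{Fuj75} furnishes the desired contraction $\mu_4: X_3 \to X_4$.

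The main subtlety is the identity $\mu_2^* \zeta_{E_1} \simeq \zeta_{E_1'} + \psi^* \zeta_{\PP(K)}$ comparing the tautological classes of the two projective bundle structures on $E_1'$; once this is in hand, the remainder of the argument is a careful bookkeeping of twists together with the standard identification of the blow-down $\sigma$.
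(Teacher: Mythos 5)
Your argument is correct and follows essentially the same route as the paper: both transfer the computation to $X_2$ via $E_1' = \mu_3^* \tilde E_1$ (using that $W \not\subset \tilde E_1$) and then exploit the blow-up relation $\mu_2^* E_1 = E_1' + E_2$. The paper merely shortcuts the final step: since $\pic(\PP^3) \simeq \Z$, it computes only the degree $E_1' \cdot C = \mu_2^* E_1 \cdot C - E_2 \cdot C = -1 - 1 = -2$ on a single line $C$ in a $\psi$-fibre (which maps isomorphically to a line in the $\mu_4$-fibre), thereby avoiding your tautological-class comparison $\mu_2^* \zeta_{E_1} \simeq \zeta_{E_1'} + \psi^* \zeta_{\PP(K)}$ and the explicit identification of $\sigma^* \sO_{\PP^3}(1)$ -- both of which are nonetheless correct as you state them.
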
 

\begin{proof} 
We use the notation from the proof of Proposition \ref{proposition-p3-bundle}.
Since $W$, the center of the blow-up $\mu_3$, is not contained in $\tilde E_1$, we have
$$ 
E'_1 = \mu_3^* \tilde E_1.
$$ 
Let $C \subset E_1'$ be a line in a fibre of the $\PP^2$-bundle $E'_1 \to \PP(K)$. Then $\mu_3$ maps $C$ isomorphically to a line in a fibre of $\mu_4$.  Hence it suffices to show that $E'_1 \cdot C = -2$. 
Now 
$$ E_1' = \mu_2^*E_1  - E_2,$$
and $E_1' \cap E_2 = N $ is a $\PP^1$-subbundle of the $\PP^2$-bundle
$E_1' \to \PP(K)$. Hence $N \cdot C = 1$, and we are left to show that 
$$ 
\mu_2^*E_1 \cdot C = E_1 \cdot \mu_2(C) = -1.
$$ 
The curve $\mu_2(C)$ is a line in a fibre of the $\PP^2$-bundle $E_1 \to S_1$, and $E_1$ is the exceptional locus of the 
blow-up of $X$ along $S_1$. Hence $E_1 \cdot \mu_2(C) = -1$. 
\end{proof}

\begin{lemma} \label{lemma-construct-Mori-contraction}
Denote by $X'$ the Stein factorisation of the image of the morphism
$\holom{\varphi_V}{X=\PP(V)}{\PP_M}$. 
There exists a bimeromorphic
morphism
$$
\holom{\varphi^-}{X_4}{X'}
$$
such that the exceptional locus coincides with $\mu_4(W) \cup (C \times D)$.
More precisely the restriction of $\varphi^-$ to $C \times D$ corresponds
to the projection onto $C$, and the restriction to $\mu_4(W)$ corresponds to
the fibration $\pi_C \circ p_F$ (cf. Diagram \eqref{bigdiagram2}).
\end{lemma}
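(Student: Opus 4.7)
The plan is to construct $\varphi^-$ by two successive applications of the rigidity lemma, starting from the proper morphism
$$
f := \varphi_V \circ \mu_1 \circ \mu_2 : X_2 \to X'.
$$
Recall from Proposition \ref{proposition-properties-X} that $\varphi_V$ contracts exactly the curves $\tilde l \subset S$ onto a rational curve $\gamma := \varphi_V(S) \subset X'$, and that under the identification $S \simeq C \times \tilde l$ of \eqref{notation-product}, the restriction $\varphi_V|_S$ coincides with the projection $p_C : S \to C \simeq \gamma$. If I can show that the $\mu_3$-fibers, and then the $\mu_4$-fibers, are each contracted to points by the relevant morphism to $X'$, then $f$ will descend as $f = \varphi^- \circ \mu_4 \circ \mu_3$, producing $\varphi^-$.

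For the factorization through $\mu_3$, the fibers are $\mathbb{P}^1$'s inside $E_2$, and by the identification $E_2 \simeq T_1 \times_{\PP(U_C)} W$ of Lemma \ref{lemma-E2} they correspond to the fibers of $T_1 \to \PP(U_C)$. Using Diagram \eqref{bigdiagram1}, such a fiber pushes forward via $\mu_2$ into $T_1$ and via $\mu_1$ onto a curve of the form $\{c\} \times \tilde l \subset S$ for some $c \in C$. Since $\varphi_V|_S = p_C$, this curve is collapsed to the single point $c \in \gamma$, and the rigidity lemma yields a morphism $g : X_3 \to X'$ with $g \circ \mu_3 = f$. For the factorization through $\mu_4$, the fibers are the $\mathbb{P}^3$'s of $\tilde E_1 \to C \times D$; by the construction in Proposition \ref{proposition-p3-bundle} (fiberwise blow-up of a $\mathbb{P}^3$ along a line, followed by the opposite $\mathbb{P}^1$-contraction via $\mu_3$) these $\mathbb{P}^3$'s correspond to the $\mathbb{P}^3$-fibers of $E_1 = \PP(N_{S/X}^*) \to S$, and therefore push down under $\mu_1$ to single points of $S$. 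Those points in turn map to single points of $\gamma$ under $\varphi_V$, so a second application of rigidity produces $\varphi^- : X_4 \to X'$.

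To identify the exceptional locus, note that $\mbox{\rm Exc}(f) = \mu_2^{-1}\mu_1^{-1}(S) = E_1' \cup E_2$. Under the antiflip $\mu_3$, the divisor $E_1'$ becomes $\tilde E_1$ while $E_2$ is contracted to $W$; under $\mu_4$, $\tilde E_1$ is in turn contracted to $C \times D$ while $W$, which by Proposition \ref{proposition-p3-bundle} is not contained in $\tilde E_1$, maps bimeromorphically onto $\mu_4(W)$. Hence $\mbox{\rm Exc}(\varphi^-) = (C \times D) \cup \mu_4(W)$. The restriction $\varphi^-|_{C \times D}$ is forced to be the projection onto $C$: indeed, via Proposition \ref{proposition-p3-bundle} the base $C \times D$ of the $\mathbb{P}^3$-bundle $\mu_4$ identifies with $\PP(K) \to S \to C$, where the kernel $K \simeq \sO_{C \times \tilde l}(-7,2)^{\oplus 2}$ supplies the $D$-factor over individual points of $S$, while $\varphi_V|_S = p_C$ eliminates the $\tilde l$-factor. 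For $\mu_4(W)$, the isomorphism $E_2 \simeq T_1 \times_{\PP(U_C)} W \simeq S \times_C W$ combined with $\varphi_V|_S = p_C$ shows that the composition $E_2 \to S \to C$ equals the projection $\mathrm{pr}_W : E_2 \to W$ followed by $W \xrightarrow{p_F} \PP(U_C) \xrightarrow{\pi_C} C$; descending through $\mu_3$ gives exactly $\varphi^-|_{\mu_4(W)} = \pi_C \circ p_F$.

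The main obstacle will be the careful tracking of the $\mu_4$-fibers through the Kawamata antiflip $\mu_3$, since these $\mathbb{P}^3$'s are not directly visible in $X_2$ or $X_3$ but only accessible via the birational identifications of Proposition \ref{proposition-p3-bundle}; the arguments for $\mu_3$-fibers and for the explicit descriptions of the two components of $\mbox{\rm Exc}(\varphi^-)$ are then relatively direct bookkeeping in Diagrams \eqref{bigdiagram1} and \eqref{bigdiagram2}.
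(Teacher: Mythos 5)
Your overall strategy is the same as the paper's: descend the proper morphism $\varphi_V \circ \mu_1 \circ \mu_2 : X_2 \to X'$ through $\mu_4 \circ \mu_3$ by the rigidity lemma (you do it in two steps, the paper in one, which is an immaterial difference), then read off the exceptional locus and the two restrictions from Diagrams \eqref{bigdiagram1} and \eqref{bigdiagram2}. The treatment of the $\mu_3$-fibres and of the two components of the exceptional locus is correct.

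However, the justification you give for the second factorisation step is wrong. You claim that the $\PP^3$-fibres of $\holom{\mu_4}{\tilde E_1}{C \times D}$ ``correspond to the $\PP^3$-fibres of $E_1 = \PP(N_{S/X}^*) \to S$ and therefore push down under $\mu_1$ to single points of $S$.'' They do not: the two $\PP^3$-bundle structures are transverse, not identified. The fibre of $E_1 \to S$ sits over a single point $(c,t) \in S \simeq C \times \tilde l$, whereas the fibre of $\tau : E_1' \to \PP(K) \simeq C\times\tilde l\times D \to C \times D$ over $(c,d)$ is the preimage of the whole curve $\{c\}\times\tilde l\times\{d\}$, i.e.\ a $\PP^2$-bundle over $\tilde l$ which only happens to be abstractly isomorphic to a blow-up of $\PP^3$ along a line (this is the content of the computation $Q\otimes\sO_{\tilde l}\simeq \sO_{\tilde l}\oplus\sO_{\tilde l}(-1)^{\oplus 2}$ in Proposition \ref{proposition-p3-bundle}); it meets each fibre of $E_1'\to S$ in a single $\PP^2$. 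Consequently these fibres push down under $\mu_2\circ\mu_1$ onto the entire curve $\{c\}\times\tilde l \subset S$, not onto a point, and $\mu_1\circ\mu_2$ alone does not contract them. The step is rescued only because $\varphi_V|_S = p_C$ then collapses $\{c\}\times\tilde l$ to the point $c$ --- exactly the mechanism you invoke (correctly) for the $\mu_3$-fibres and again later when you say $p_C$ ``eliminates the $\tilde l$-factor'' of $\PP(K)$. So the conclusion of the lemma stands, but as written your argument for the $\mu_4$-factorisation would fail; replace it by the observation that $\tau|_{E_1'}$ coincides with $E_1' \to C\times\tilde l\times D \to C$, so that the $\tau$-fibres over $C\times D$ are indeed contained in fibres of the map to $X'$.
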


\begin{proof}
Consider the birational morphism
$$
\tau: X_2 \stackrel{\mu_1 \circ \mu_2}{\longrightarrow} X \stackrel{\varphi_V}{\longrightarrow} X',
$$
and recall that $\varphi_V$ simply contracts $S \simeq C \times \tilde l$ onto $C$.
By the rigidity lemma this morphism factors through $\holom{\mu_4 \circ \mu_3}{X_2}{X_4}$ if we show that the $(\mu_4 \circ \mu_3)$-fibres are contained in $\tau$-fibres.
The exceptional locus of $\mu_4 \circ \mu_3$ is equal to $E_1' \cup E_2$ and the restriction of $\mu_4 \circ \mu_3$ to the exceptional divisors is given by
$$
E_1' \stackrel{\mu_3}{\longrightarrow} \tilde E_1 \stackrel{\mu_4}{\longrightarrow} C \times D
$$
and 
$$
\mu_3: E_2 \rightarrow W.
$$
By \eqref{bigdiagram2} and \eqref{bigdiagram1} the $\mu_3$-fibres map onto $p_C$-fibres, so for $E_2$ the property is clear. For $E_1'$ recall from the proof of Proposition \ref{proposition-p3-bundle} that we have a $\PP^2$-bundle structure
$E_1' \rightarrow C \times \tilde l \times D$ so that the fibres of $E_1' \rightarrow C \times D$ are $\PP^2$-bundles over $\tilde l$. The restriction of $\tau$ to $E_1'$
coincides with 
$$
E_1' \rightarrow C \times \tilde l \times D \rightarrow C,
$$
so we obtain the desired property for $E_1'$.
\end{proof}

\section{Proofs of the main results}

\begin{proof}[Proof of Theorem \ref{theorem-V5}]
We use the notation introduced in Subsection \ref{subsectionPV}. 
Although the birational map 
$$
\mu_3 \circ (\mu_1 \circ \mu_2)^{-1} : X \dashrightarrow X_3
$$
is not an isomorphism in codimension one, we can show that $Z_M$ is not Stein. In fact,
we have
$$ 
Z_M \simeq X \setminus Y \simeq X_2 \setminus (Y_2 \cup E_1' \cup E_2) \simeq X_3 \setminus (Y_3 \cup \tilde E_1 \cup W).
$$
By Proposition \ref{proposition-strict-transform} and \ref{proposition-p3-bundle}
the codimension two subset $B$ is not contained in the divisors $Y_3 \cup \tilde E_1$. 
Thus by \cite[V, \S 1, Thm.4]{GR04} the complex space $Z_M$ is not Stein.
\end{proof}

\begin{proof}[Proof of Theorem \ref{theorem-V5-details}]
We use the notation introduced in Subsection \ref{subsectionPV}. 
We have already seen in the proof of Theorem \ref{theorem-V5} that $Z_M$ is not Stein.
Set now
$$
X^- := \PP(V)^- := X_4,
\qquad
\psi := (\mu_4 \circ \mu_3) \circ (\mu_1 \circ \mu_2)^{-1} : X \dashrightarrow X^-
$$
and denote by $Y^- := \PP(T_M)^-:=Y_4$ the strict transform of $Y=\PP(T_M)$ in $X_4$.

By construction of $\mu_1$ and $\mu_2$, the map
$$
X_2 \setminus (E_1' \cup E_2) \rightarrow X \setminus S
$$ 
is an isomorphism. 
By construction of $\mu_3$ and $\mu_4$, the map
$$
X_2 \setminus (E_1' \cup E_2) \rightarrow X^- \setminus (\mu_4(W) \cup (C \times D))
$$ 
is an isomorphism. Thus it is clear that $\psi$ is an isomorphism in codimension one.

Set $A=\mu_4(W) \cup (C \times D)$. Since $W$ is not contained in $Y_3$ by
Proposition \ref{proposition-strict-transform} and since $\mu_4$ is an isomorphism
at the generic point of $W$ by Proposition \ref{proposition-p3-bundle},
the fourfold $\mu_4(W)$ is not contained in $Y^-$. This shows statement $a)$.

By what precedes the bimeromorphic map $\psi$ induces an isomorphism
$$
Y \setminus S \rightarrow Y^- \setminus (A \cap Y^-).
$$
Since $\mu_4(B)$ is not contained in $Y^-$, the intersection $A \cap Y^-$ has codimension at least two in $Y^-$. Thus $Y \dashrightarrow Y^-$
is an isomorphism in codimension one. 
To show that $X \dasharrow X^-$ is antiflip as well as $Y \dasharrow Y^-$, it suffices that $-K_{Y^-}$ is $\varphi^-|_{Y^-}$-ample. This is shown in Proposition 
\ref{proposition-final} below. Thus 
 statement (b) is established.

For the proof of $c)$, note that by construction it is clear that both spaces $X^-$ and $Y^- $ are $\mathbb Q$-factorial with terminal singularities.
Since $-K_{X^-}$ is $\varphi^-$-ample by b), the bimeromorphic morphism $\varphi^- : X^- \rightarrow X'$
is projective. Since $X'$ is a projective variety, the complex space $X^-$ is thus projective.
Thus it suffices to show that the anticanonical divisors are big and nef. 
Since $\psi$ and $\psi|_Y$ are isomorphisms in codimension one
we have
$$
-K_{X^-} = \psi_* (-K_{X}) = \psi_* (4 c_1(\zeta_V)) 
$$
and
$$
-K_{Y^-} = (\psi|_Y)_* (-K_{Y}) = (\psi|_Y)_* (3 c_1(\zeta_M)). 
$$
Since $T_M$ is big, the bigness of the anticanonical divisor is thus clear and we are left to show nefness. Since 
$$ 
Y^- \in \vert \frac{1}{4} (-K_{X^-}) \vert, 
$$ 
it suffices to show that $-K_{Y^-}$ is nef. 
This is settled in Proposition \ref{proposition-final}. 

Finally, statement (d) follows from \cite[Thm.1.2]{HP21}. 
\end{proof}

\begin{proposition} \label{proposition-final}
The anticanonical divisor of $Y^-$ is nef.
Furthermore, the anticanonical divisor of $Y^-$ is relatively ample with respect
to the morphism $\varphi^-$.
\end{proposition}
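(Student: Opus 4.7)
The relative Euler sequence for $Y=\PP(T_M)\to M$ combined with $c_1(T_M)=-K_M$ gives $K_Y = -3\zeta_M$. Using that $\psi|_Y$ is an isomorphism in codimension one (as established in the proof of Theorem~\ref{theorem-V5-details} above), this identification passes to strict transforms and we obtain $-K_{Y^-} = 3\zeta_M^-$ as $\Q$-Weil divisors. The plan is thus to show that $\zeta_M^-$ is nef on $Y^-$ and strictly positive on every curve contracted by $\varphi^-|_{Y^-}$.

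For the relative ampleness, I would enumerate the $\varphi^-|_{Y^-}$-contracted curves. By Lemma~\ref{lemma-construct-Mori-contraction} these lie in $Y^-\cap(\mu_4(W)\cup(C\times D))$, and the $\varphi^-$-contracted fibres within each piece are determined by the projections $C\times D\to C$ and $W\simeq\PP(U_F)\to\PP(U_C)\to C$; Proposition~\ref{proposition-strict-transform} constrains the intersection $Y^-\cap\mu_4(W)$. For each contracted curve $\Gamma$, $\zeta_M^-\cdot\Gamma$ is then computed by pulling back to the common resolution $Y_2$, with morphisms $\alpha:Y_2\to Y$ and $\beta:Y_2\to Y^-$ sharing exceptional divisors $F_1$ and $E_2':=E_2\cap Y_2$. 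Writing
$$
\beta^*\zeta_M^- = \alpha^*\zeta_M + a_1 F_1 + a_2 E_2',
$$
with coefficients $a_1,a_2$ pinned down by $\beta$-triviality on the $\beta$-contracted fibre classes, reduces every intersection on $Y^-$ to a computation on $Y_2$.

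For nefness on a non-contracted curve $\Gamma\subset Y^-$, the corresponding curve in $Y$ is not contracted by $\varphi_M$; since $\zeta_M\otimes\pi^*\sO_M(1)$ is globally generated on $Y$ and $\varphi_M$ contracts exactly the $\tilde l$ (Lemma~\ref{liu}), the relevant $\alpha^*\zeta_M$-intersection is nonnegative, while the contribution of $a_1F_1+a_2E_2'$ is nonnegative by terminality of $Y^-$. The hard part is the explicit determination of $a_1,a_2$ and the strict-positivity check on the contracted curves arising from $Y^-\cap\mu_4(W)$: this requires tracing the normal bundle data of Propositions~\ref{prop:N2}, \ref{prop:N3}, and Lemma~\ref{lemma-normal-S1} through each stage $X_i\to X_{i+1}$ of the construction, and extracting numerical positivity from the splitting behaviour of $T_M$ along the negative-splitting lines $l\subset\bar S$.
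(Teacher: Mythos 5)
Your reduction to $-K_{Y^-}=3\zeta_M^-$ and the idea of computing on the common resolution $Y_2$ match the paper's starting point, but there are two genuine problems. First, the coefficients in your decomposition come out \emph{negative}: the paper computes $h^*(-K_{Y^-}) = g^*(-K_Y) - \tfrac{3}{2}F_1' - 3F_2$, i.e.\ $a_1=-\tfrac12$, $a_2=-1$ in your normalisation. Your claim that ``the contribution of $a_1F_1+a_2E_2'$ is nonnegative by terminality of $Y^-$'' conflates these coefficients with the discrepancies of $K$: terminality gives $K_{Y_2}=h^*K_{Y^-}+\tfrac32 F_1'+F_2$ with positive discrepancies, but the proportionality $-K=3\zeta$ holds on $Y$ and $Y^-$, not on $Y_2$, so the two computations differ by the $g$-discrepancies and the signs flip. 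Consequently, for a curve $\Gamma$ not contained in the exceptional locus but meeting it, your formula gives $\zeta_M^-\cdot\Gamma \le \zeta_M\cdot g(\tilde\Gamma)$, and nonnegativity of the right-hand side (which itself does not follow from $\Gamma$ not being $\varphi_M$-contracted --- $\zeta_M$ is not nef, e.g.\ $\zeta_M\otimes\sO_S\simeq\sO_{C\times\tilde l}(7,-1)$; global generation of $\zeta_M\otimes\pi^*\sO_M(1)$ only bounds $\zeta_M\cdot\Gamma$ from below by $-H\cdot\pi(\Gamma)$) does not suffice. The paper avoids this by arguing directly on $Y^-$: the pushed-forward linear system $(\psi_Y)_*|\zeta_M|$ has base locus contained in $h(F_1'\cup F_2\cup Z_2)$, so nefness need only be checked on those three pieces, where one restricts $h^*(-K_{Y^-})$ explicitly.

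Second, your proposal explicitly defers ``the hard part,'' but that part \emph{is} the proof: the discrepancy formula above and the explicit identifications $\sO_{F_2}(2h^*(-K_{Y^-}))\simeq(\sO_{\PP(N^*_{S_1/Y_1})}(1)\otimes\nu_2^*\sO_{C\times\tilde l}(9,-1))^{\otimes 3}$ and $\sO_{F_1'}(2h^*(-K_{Y^-}))\simeq\sO_{\PP(\sO_{C\times\tilde l}(7,0)^{\oplus2})}(3)$, whose nefness follows from the extensions defining $W_C$ and $K$. Moreover your enumeration of the locus to be checked misses $Z_2$, the strict transform of $\PP(N_{B/M})$: by Lemma \ref{lem:base} this is a component of $\Bs|\zeta_M|$, its curves are not $\varphi^-$-contracted, and the check there (the paper's Step 1) requires the non-transversal intersection $[Z\cap S]=2\Delta$ to conclude $\sO_{Z_2}(h^*(-K_{Y^-}))\sim_{\bQ}\sO_{B\times\PP^1}(12,0)\otimes\sO_{Z_2}(\tfrac32 F_1')$. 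As written, the proposal is a plan with a sign error in the easy half and the computational core left undone.
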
 

\begin{proof} 
We set $\nu_j := {\mu_j}_{\vert Y_j}$,
and
$$
F'_1 := E'_1 \cap Y_2, \qquad  \tilde F_1 := \tilde E_1\cap Y_3.
$$ 
It is not hard to check that all the intersections are transversal,
in particular we have $\nu_2^* F_1 = F_1'+F_2$.

Set $g = \nu_1 \circ \nu_2: Y_2 \to Y$ and $h = \nu_4 \circ \nu_3: Y_2 \to Y^-=Y_4$. 
Since $S_1$  is  contained in $F_1$, we have
$$ 
-K_{Y_2} = g^*(-K_Y)  - 2 F'_1 - 4F_2.
$$
Since $\nu_3$ is a smooth blowup and along a center that is not contained in $\tilde F_1$ (Proposition \ref{proposition-p3-bundle}) and $\nu_3$ contracts the $\PP^2$-bundle 
$\tilde F_1 \rightarrow C \times D$ onto $C \times D$ with $N_{\tilde F_1/Y_3} \otimes \sO_{\PP^2} \simeq \sO_{\PP^2}(-2)$,
we have
$$ 
-K_{Y_2} = h^*(-K_{Y_4} ) - \frac{3}{2} F'_1 - F_2.
$$
Thus we obtain
\begin{equation} \label{eq:base} 
h^*(-K_{Y_4}) = g^*(-K_Y) - \frac{3}{2}F'_1 - 3F_2 = g^*(-K_Y) - 3 \nu_2^* F_1 + \frac{3}{2} F_1'.
\end{equation} 
Recall first that $-K_Y = 3 \zeta_M$ and $\Bs{(|\zeta_M|)} = S \cup Z$ where 
$Z := \PP(N_{B/M})$. Thus we have
$$
\Bs{((\psi_Y)_*|\zeta_M|)} \subset h(F_1' \cup F_2 \cup Z_2) 
$$
where $Z_2 \subset Y_2$ is the strict transform of $Z$. 

{\em Step 1. $h^*(-K_{Y_4})$ is nef on $Z_2$.}
Since $N_{B/M}  = \sO_B(5) \oplus \sO_B(5)$ by \eqref{normal-B}, we have $Z \simeq B \times \PP^1$. Moreover since $N^*_{B/M} \simeq N_{B/M} \otimes \sO_B(-10)$,
the blowup of $M$ along $B$ embeds into $Y=\PP(T_M)$. 
Thus we can apply \cite[Lemma 7.2.3(iii)]{CS16} to see that
the intersection $Z \cap S$ is not transversal and $[Z \cap S]=2 \Delta$
where $\Delta \in |\sO_{B \times \PP^1}(1,1)|$.

Since $\nu_1$ is the blowup of $Y$ along $S$, the intersection of the strict transform
$Z_1 \subset Y_1$ with the exceptional divisor $F_1$ has still support along $\Delta$,
yet now the intersection is transversal. 
Thus we obtain that 
\begin{equation} \label{delta}
\sO_{Z_2}(F_1) \simeq \sO_{B \times \PP^1}(1,1).
\end{equation}
We claim that 
$$
F_1 \cap Z_1 = S_1 \cap Z_1.
$$
Indeed by \eqref{surf} the surface $S_1 \subset F_1$ corresponds to the quotient $N_{S/Y}^* \rightarrow \sO_{C \times \tilde l}(-5,1)$,
 which is the quotient determined by the direct factor
$\sO_{C \times \tilde l}(2,0)$, i.e. the relative tangent bundle of the projection $S \simeq C \times \tilde l \rightarrow \tilde l$. Yet these curves are tangent to $Z$ along $\Delta$, so
$\PP(\sO_{C \times \tilde l}(-5,1) \otimes \sO_\Delta)$ is contained in $Z_1$. 
This shows the claim.

Since $\nu_2$ is the blow-up of $Y_1$ along $S_1$ and $S_1 \cap Z_1 = F_1 \cap Z_1 \subset Z_1$ is a Cartier divisor, the
strict transform $Z_2 \subset Y_2$ is isomorphic to  $Z_1 \simeq Z \simeq B \times \PP^1$.
Now we can finally compute the restriction:
since $N_{B/M}  \simeq \sO_B(5) \oplus \sO_B(5)$, we have
$$
\sO_Z(-K_Y)  \simeq \zeta_M^{\otimes 3} \otimes \sO_Z \simeq \sO_{\PP(N_{B/M})}(3) \otimes \sO_Z \simeq \sO_{B \times \PP^1}(15,3).
$$
Hence $\sO_{Z_2}(g^*(-K_Y)) \simeq \sO_{B \times \PP^1}(15,3)$. 
By \eqref{delta} we have $\sO_{Z_2}(\nu_2^* F_1) \simeq \sO_{B \times \PP^1}(1,1)$, hence
 $$ 
\sO_{Z_2}(g^*(-K_Y) - 3 \nu_2^* F_1 + \frac{3}{2} F_1')
\sim_\Q \sO_{B \times \PP^1}(12,0) \otimes \sO_{Z_2}(\frac{3}{2} F'_1),
$$
is nef, since any effective divisor on $Z_2 \simeq B \times \PP^1$ is nef. 
By \eqref{eq:base} this finishes the proof of Step 1.

{\em Step 2. $h^*(-K_{Y_4})$ is nef on $F_2$.}
Since $\nu_2$ is the blowup of $S_1 \subset Y_1$ and $S_1 \simeq S$ we obtain from
Lemma \ref{lemma-zetaM} that 
$$ 
\sO_{F_2}(g^*(-K_Y)) \simeq g^* \sO_{C \times \tilde l}(21,-3).
$$
Furthermore $F_2 \simeq \PP(N_{S_1/Y_1}^*)$ and $F_1' \cap F_2 \simeq \PP(N_{S_1/F_1}^*)$ , so by \eqref{surf}
$$ 
\sO_{F_2}(F'_1) \simeq 
\sO_{\PP(N_{S_1/Y_1}^*)}(1) \otimes \nu_2^* \sO_{C \times \tilde l}(5,-1)
$$
Since $\sO_{F_2}(F_2) \simeq \sO_{\PP(N_{S_1/Y_1}^*)}(-1) $ we obtain from 
\eqref{eq:base} that
$$
\sO_{F_2}(2 h^*(-K_{Y_4})) = (\sO_{\PP(N_{S_1/Y_1}^*)}(1)  \otimes \nu_2^* \sO_{C \times \tilde l}(9,-1))^{\otimes 3}.
$$
Yet by \eqref{equation-conormal-1} we have $N_{S_1/Y_1}^* \simeq p_C^* W_C \otimes \sO_{C \times \tilde l}(0,1)$, so
$$
N_{S_1/Y_1}^* \otimes \sO_{C \times \tilde l}(9,-1) \simeq p_C^* W_C \otimes \sO_{C \times \tilde l}(9,0)
$$
is nef by the extension defining $W_C$ (cf. Lemma \ref{lemma-normal-S1}).

Note that this also shows that $\mu_3$ maps $F_2$ onto $\PP(W_C)$ and $\PP(W_C) \subset Y_4$
is one of the two irreducible components of the exceptional locus of $\varphi^-|_{Y_4}$.
Since $-K_{Y_4}$ is a positive multiple of the tautological class along the fibres of
$\PP(W_C) \rightarrow C$, we obtain the first half of the second statement.

{\em Step 3. $h^*(-K_{Y_4})$ is nef on $F_1'$.}
We proceed analogously to the proof of Proposition \ref{proposition-p3-bundle}:
since $F_1 \simeq \PP(N_{S/Y}^*)$ and since $S_1 \subset F_1$ corresponds
to the quotient
$$
0 \rightarrow K := \sO(-7,2)^{\oplus 2} \rightarrow N_{S/Y}^* \rightarrow \sO(-5,1) \rightarrow 0,
$$
the blowup $F_1'$ has a $\PP^1$-bundle structure $\holom{\psi}{E_1'}{\PP(K)}$. This $\PP^2$-bundle structure corresponds to a rank two vector bundle $Q_Y$ given by an extension
\begin{equation} \label{define-QY}
0 \rightarrow \sO \rightarrow Q_Y \rightarrow  p^* \sO(-5,1) \otimes \zeta_{\PP(K)}^* \rightarrow 0,
\end{equation}
where $\holom{p}{\PP(K)}{S}$ is the natural map and $\zeta_{\PP(K)}$ the tautological divisor.
The exceptional divisor $N$ of the blow-up $F_1' \rightarrow F_1$, i.e. the intersection
$F_2 \cap F_1$, is given by the quotient $Q \rightarrow  p^* \sO(-5,1) \otimes \zeta_{\PP(K)}^*$.
Thus we have 
$$
F_2 \cap F_1 = N.
$$
Note also that $\nu_2^* \zeta_{\PP(N_{S/Y}^*)}(1) \simeq p^* \zeta_{\PP(K)} \otimes \sO_{F_1'}(N)$,
so 
$$
\sO_{F_1'}(\nu_2^* F_1) \simeq \nu_2^* \zeta_{\PP(N_{S/Y}^*)}(-1) \simeq p^* \zeta^*_{\PP(K)} 
\otimes \sO_{F_1'}(-N).
$$
Thus by \eqref{eq:base} we have
$$
\sO_{F_1'}(2 h^*(-K_{Y_4})) \simeq \sO_{F_1'}(2 g^*(-K_Y) - 3 \nu_2^* F_1 - 3 F_2)
\simeq \sO_{F_1'}(2 g^*(-K_Y)) \otimes p^* \zeta^{\otimes 3}_{\PP(K)}.
$$
Since $K \simeq \sO(-7,2)^{\oplus 2}$ and $\sO_S(-K_Y) = \sO_{C \times \tilde l}(21,-3)$, we see that
$$
\sO_{F_1'}(2 g^*(-K_Y)) \otimes p^* \zeta^{\otimes 3}_{\PP(K)}
\simeq \sO_{\PP(\sO_{C \times \tilde l}(7,0)^{\oplus 2})}(3)
$$
is nef. This equation also shows that the restriction of $-K_{Y_4}$ to $C \times D$
is isomorphic to $\sO_{C \times D}(21,3)$. Since $C \times D$
is one of the two irreducible components of the exceptional locus of $\varphi^-|_{Y_4}$,
we obtain the second half of the second statement.

{\em Step 4. Conclusion.}
By Step 2 and 3, the morphism $\varphi^-$ is projective, in
fact $-K_{Y^-}$ is relatively ample. Since the image of $\varphi^-$ is a projective variety, the complex space $Y_4=Y^-$
is thus projective. In particular we can verify the nefness
of the anticanonical divisor by computing the intersection with
curves in $Y_4$.

Since
$-K_{Y_4} \simeq 3 (\psi_Y)_* \zeta_M$, it is sufficient to show that $-K_{Y_4}$
is nef on $h(F_1' \cup F_2 \cup Z_2)$. 
Yet by the Steps 1-3 the divisor $h^*(-K_{Y_4})$ is nef on  $F_1' \cup F_2 \cup Z_2$, this finishes the proof.
\end{proof}

\begin{proof}[Proof of Theorem \ref{thm:Fano3}]

Suppose first that $\rho(M) \geq 2$. By \cite[Cor.1.8]{HP21}, any contraction $f: M \to N$  of an extremal ray is a fibre space. 
Further, by Corollaries \ref{cor1} and \ref{cor2}, the fibration $f$ is either
a $\mathbb P^1$-bundle over the surface $N$, 
a $\mathbb P^2$-bundle or a quadric bundle over $\mathbb P^1$. Going through the classification \cite{MM81}, \cite{MM82}, \cite{MM03}, only the following cases remain:
\begin{enumerate}
\item $M = \mathbb P(T_{\mathbb P^2})$;
\item $M = \mathbb P^2 \times \mathbb P^1$;
\item $M = \mathbb P^1 \times \mathbb P^1 \times \mathbb P^1$.
\end{enumerate}

We are therefore reduced to the case $\rho(M) = 1$. Since $Z_M$ is affine, the tangent bundle $T_M$ is big \cite[Prop.4.2]{GW20}. Hence 
we know by \cite[Cor.1.2]{HL21} that $M \simeq \PP^3, M \simeq Q^3$ or
$M = V_5$, the del Pezzo threefold of degree $5$. 
This last case  is excluded by Theorem \ref{theorem-V5-details}. 
\end{proof}


\providecommand{\bysame}{\leavevmode\hbox to3em{\hrulefill}\thinspace}
\providecommand{\MR}{\relax\ifhmode\unskip\space\fi MR }
\providecommand{\MRhref}[2]{%
  \href{http://www.ams.org/mathscinet-getitem?mr=#1}{#2}
}
\providecommand{\href}[2]{#2}

\end{document}